\crefname{section}{Section}{Sections}
\crefname{subsection}{Subsection}{Subsections}
\crefname{appendix}{Appendix}{Appendix}
\crefname{figure}{Figure}{Figures}
\crefname{table}{Table}{Tables}
\crefname{property}{Property}{Properties}
\crefname{theorem}{Theorem}{Theorem}
\newtheorem{theorem}{Theorem}
\newtheorem{proposition}{Proposition}
\newtheorem{definition}[theorem]{Definition}
\theoremstyle{remark} }
\numberwithin{equation}{section}
\definecolor{electricpurple}{rgb}{0.75,0.0,1.0}
\definecolor{darkred}{rgb}{0.65,0,0}
\definecolor{green}{rgb}{0.0, 0.5, 0.0}
 \newcommand\deletei{\bgroup\markoverwith{\textcolor{darkred}{\rule[0.5ex]{1pt}{1pt}}}\ULon}
 \newcommand\deleteii{\bgroup\markoverwith{\textcolor{blue}{\rule[0.5ex]{2pt}{2pt}}}\ULon}
\title {A spectral method for a Fokker-Planck equation in neuroscience with applications in neural networks with learning rules }
\author{Pei Zhang\thanks{Beijing Computational Science Research Center, Beijing, China, 100193, email: {\tt zhangpei@csrc.ac.cn}},
   ~~Yanli Wang\thanks{Beijing Computational Science Research Center, Beijing, China, 100193, email: {\tt ylwang@csrc.ac.cn}},
   ~~Zhennan Zhou\thanks{Beijing International Center for Mathematical Research, Peking University, Beijing, China, 100871, email: {\tt zhennan@bicmr.pku.edu.cn}}.
    }
\begin{document}
\maketitle

\begin{abstract}
    In this work, we consider the Fokker-Planck equation of the Nonlinear Noisy Leaky Integrate-and-Fire (NNLIF) model for neuron networks. Due to the firing events of neurons at the microscopic level, this Fokker-Planck equation contains dynamic boundary conditions involving specific internal points. To efficiently solve this problem and explore the properties of the unknown, we construct a flexible numerical scheme for the Fokker-Planck equation in the framework of spectral methods that can accurately handle the dynamic boundary condition. This numerical scheme is stable with suitable choices of test function spaces, and asymptotic preserving, and it is easily extendable to variant models with multiple time scales. We also present extensive numerical examples to verify the scheme properties, including order of convergence and time efficiency, and explore unique properties of the model, including blow-up phenomena for the NNLIF model and learning and discriminative properties for the NNLIF model with learning rules.
\end{abstract}

\vspace*{4mm}
  \noindent {\bf Key words:}  Integrate-and-Fire model; Fokker-Planck equation; neuron network; spectral methods.
  \noindent {\bf Mathematics Subject Classification:} 35Q92; 65M70; 92B20

\section{Introduction}\label{sec:introduction}

In recent years, there has been a growing interest in studying large-scale neuron network models, e.g. \cite{nykamp2000population}\cite{caceres2011analysis}\cite{renart2004mean}\cite{delarue2015global}, placing a greater emphasis on the proper mathematical tools for analyzing and simulating the dynamics of such networks. The underlying formulation of these models is based on deterministic or stochastic differential equations which describe the activities of neuron ensembles.

In this article, we consider the Nonlinear Noisy Leaky Integrate-and-Fire (NNLIF)  model, which was originally proposed in the pioneering works \cite{brunel1999fast}\cite{brunel2000dynamics}, and it is one of the fundamental models in computational neuroscience. In the microscopic perspective, this model takes the membrane potential $v$ of neurons as the state variable, which is restricted by a given threshold value $V_F$ \cite{renart2004mean}\cite{sirovich2006dynamics}\cite{omurtag2000simulation}\cite{mattia2002population}. A defining characteristic of this model is the inclusion of firing events, which are described by a reset mechanism: when the membrane potential $v$ reaches the threshold value of $V_F$, a spike occurs, and the membrane potential is then reset to a lower value $V_R$. Moreover, the neurons within an ensemble interact with each other only through spikes. In the macroscopic perspective, this model is related to the Fokker-Planck equation \cite{nykamp2000population}\cite{liu2022rigorous}\cite{Liu2021investigating}, as follows:
\begin{equation}
    \label{eq:problem1}
    \begin{cases}
        \partial_{t}p+\partial_{v}(hp)-a\partial_{v v}p=0,\qquad v\in(-\infty,V_F]/\{V_R\},\\
        p(v,0)=p^0(v),\qquad p(-\infty,t)=p(V_F,t)=0,\\
        p(V^-_R,t)=p(V^+_R,t),\quad \partial _vp(V^-_R,t)=\partial _vp(V^+_R,t)+\frac{N(t)}{a},\\
    \end{cases}
\end{equation}
where the probability density function $p(v,t)$ represents the probability of finding a neuron at voltage $v$ and given time $t$, and $p^0(v)$ is the initial condition. The spiking behavior is described by the mean firing rate $N (t)$, which is implicitly given by
\begin{equation}
    \label{eq:Nt}
    N(t)=-a(N(t))\frac{\partial p}{\partial v}(V_F,t).
\end{equation}
The drift coefficient $h$ and diffusion coefficient $a$ are typically expressed as functions of the mean firing rate $N(t)$
\begin{equation}
    \label{eq:ha}
    h(v,N)=-v+bN,\qquad a(N)=a_0+a_1N,
\end{equation}
where $-v$ models the leaky mechanism and $b$ represents the connectivity of the network: $b > 0$ for excitatory-average networks and $b < 0$ for inhibitory-average networks. The connectivity of the network has an essential effect on the properties of \eqref{eq:problem1}, such as its steady states and blow-up phenomenon. Besides, $a$ stands for the amplitude of the noise, where $a_0 > 0$ and $a_1 \geq 0$. 
The probability density function $p(v, t)$  should satisfy the condition of conservation of mass
\begin{equation}
    \int_{-\infty}^{V_{F}} p(v, t) d v=\int_{-\infty}^{V_{F}} p^{0}(v) d v=1.
\end{equation}

In recent years, there have been significant progress in the numerical and analytical studies of the NNLIF models.
\cite{delarue2015global}\cite{delarue2015particle}\cite{caceres2020understanding} analyze the stability and asymptotic behavior from the point of view of microscopic stochastic differential equation (SDE). From the macroscopic perspective, \cite{caceres2011analysis}\cite{carrillo2013classical}\cite{carrillo2015qualitative} establish the existence theory of the Fokker-Planck equation \eqref{eq:problem1}, and the classical solution exists only when  the firing rate N(t) does not diverge. In \cite{caceres2011analysis}, the authors analyze the model's steady states and blow-up phenomenon. In \cite{dou2022dilating}, aiming at investigating the solution structure in the presence of the blow-up phenomenon, a new notion of generalized solution is proposed by introducing the dilated time scale.

Some variants of the model \eqref{eq:problem1} have been studied to incorporate more biological ingredients \cite{caceres2018analysis}, including multi-species populations (excitatory and inhibitory) \cite{caceres2016blow}, the refractory state \cite{caceres2014beyond}\cite{sharma2020discontinuous}, the transmission delay between neurons \cite{caceres2019global}\cite{hu2021structure}\cite{sharma2020discontinuous} and the age of the neuron \cite{dumont2016noisy}. Besides, there have also been multi-scale models with additional state variables. For example, the kinetic voltage-conductance model for neuron networks has been explored in \cite{caceres2011numerical}\cite{dou2022bounds}\cite{carrillo2022simplified}.  In \cite{he2022structure}\cite{perthame2017distributed}, the authors consider the learning behavior of the NNLIF model which is structured by the synaptic weights, and the distribution of the weights evolve according to the Hebbian learning rule. 

In this paper, we focus on the Fokker-Planck equation \eqref{eq:problem1}, and the primary goal is to investigate its efficient numerical approximation and applications to other variants. This Fokker-Planck equation is distinguished from other classical kinetic models due to its complex nonlinearity through the boundary flux and the dynamic boundary conditions. In spite of the existing results, the properties of this model are far from being fully understood, necessitating further numerical experiments to gain a deeper understanding. In \cite{caceres2011numerical}, the authors propose a numerical scheme combining the WENO-finite differences and the Chang-Cooper method. The numerical tests are mainly concerned with the blow-up phenomenon and the steady states. In \cite{hu2021structure}\cite{he2022structure}, the authors propose conservative and conditionally positivity-preserving schemes and show that the corresponding discrete entropy is dissipating in time. Besides, the finite element method and discontinuous Galerkin method are also applied to solve the Fokker-Planck type equations \cite{sharma2019numerical}\cite{sharma2020discontinuous}. Properly addressing the flux shift terms in the Fokker-Planck equation is of essential significance in the numerical approximation. In most cases, the flux offset term is implicitly included in the equation, requiring the modification of dynamic boundary conditions into equations with $\delta$ source terms for better implementation of numerical methods. Then the original problem \eqref{eq:problem1} is transformed into the following boundary value problem:
\begin{equation}
\label{eq:problem_delta}
    \begin{cases}
        \partial_{t}p+\partial_{v}(hp)-a\partial_{v v}p=N(t)\delta(v-V_R),\qquad v\in(-\infty,V_F],\\
        p(v,0)=p^0(v),\qquad p(-\infty,t)=p(V_F,t)=0,\\
    \end{cases}
\end{equation}
where $\delta(v)$ stands for the Delta function. Although  this transformation facilitates the construction of numerical schemes, it also causes certain restrictions due to the particularity of the $\delta$ function, such as requiring $V_R$ to fall on the grid point.

To achieve improved efficiency while properly handling the dynamic boundary conditions, we aim to construct a spectral approximation for the Fokker-Planck equation \eqref{eq:problem1}. The spectral method that we shall present relies on semi-globally differentiable and integrable basis functions, which accurately capture the dynamic boundary conditions in \eqref{eq:problem1} rather than complicating the equation with a $\delta$ source as in \eqref{eq:problem_delta}. 

There are two key factors that enable our scheme to meet the desired properties. First, the construction of the basis functions enforces the approximate solution to satisfy the dynamic boundary conditions exactly. Second, the time evolution of the approximate solution is determined by the Galerkin method, and there exist suitable choices of the test function spaces  that make the method stable and asymptotic preserving.

Beyond that, we perform systematic numerical tests to verify the convergence order of the method and to investigate the diverse solution properties such as the blow-up phenomenon and discrete relative entropy. Taking advantage of the scheme's flexibility, we apply it to the NNLIF model with learning rules proposed in \cite{perthame2017distributed}, testing the discrimination property and further exploring the learning behavior of the model.

The paper is structured as follows. In Section \ref{sec:weak_form}, we define the weak solution of \eqref{eq:problem1} and establish its relationship to the classical solution. In Section \ref{sec:scheme}, we introduce the numerical scheme for the NNLIF model based on spectral methods in detail and analyze the choices of different test functions in constructing the numerical solution. In Section \ref{sec:lr}, we introduce the NNLIF model with learning rules and apply the proposed method to the model. In Section \ref{sec:numerical_test}, we perform some numerical experiments, including the convergence order of the scheme, comparison with existing numerical methods, and some other numerical explorations.
\section{Weak formulation}\label{sec:weak_form}

In this section, we introduce the weak formulation of the problem, which is the foundation for constructing numerical solutions. The link between 
the classical solution and the weak solution of this model will be analyzed as well.

For simplicity, we choose a finite interval $[V_{\min}, V_F]$ as the computation domain and suppose $V_{\min}$ is small enough such that the density function $p(v,t)$ for  $v < V_{\min}$ is negligible. Then the semi-unbounded problem \eqref{eq:problem1} can be truncated to boundary value problem as follow:
\begin{equation}
    \label{eq:problem2}
    \begin{cases}
        \partial_{t}p+\partial_{v}(hp)-a\partial_{v v}p=0,\qquad v\in[V_{\min},V_F]/\{V_R\},\\
        p(v,0)=p^0(v),\qquad p(V_{\min},t)=p(V_F,t)=0,\\
        p(V^-_R,t)=p(V^+_R,t),\quad \partial _vp(V^-_R,t)=\partial _vp(V^+_R,t)+\frac{N(t)}{a}.\\
    \end{cases}
\end{equation}
The truncated equation \eqref{eq:problem2} should still satisfy the mass conservation, i.e.
\begin{equation}
    \label{eq:mass_conservation}
    \int_{V_{\min}}^{V_{F}} p(v, t) d v=\int_{V_{\min}}^{V_{F}} p^{0}(v) d v=1.
\end{equation}
By integrating \eqref{eq:problem2} and using the boundary conditions therein, this conversation implies the following boundary condition.
\begin{equation}
    \label{eq:leftd}
    \frac{\partial}{\partial v}p(V_{\min},t)=0.
\end{equation}
In fact, \eqref{eq:leftd} is never precisely satisfied, but as long as $V_{\min}$ is chosen properly, $\partial_vp(V_{\min},t)$ is negligible.

 We adopt the definition of the classical solution in \cite{carrillo2013classical}\cite{liu2022rigorous} for the truncated problem.
\begin{definition}[classical solution] \label{class_solution}
    For any given $0<T<+\infty$, $p(v,t)$ is a classical solution of \eqref{eq:problem2} in the time interval $(0, T]$  in the following sense:
    \begin{itemize}
        \item[1.] $N(t)=-a\partial_vp(V_F^-,t)$ is a continuous function for $t\in [0,T]$,
        \item[2.] $p(v,t)$ is continuous in the region $\{(v,t):V_{\min}<v<V_F, t\in[0,T]\}$,
        \item[3.] $p_{vv}$ and $p_t$ are well defined in the region $\{(v,t): v\in [V_{\min},V_R)\cup(V_R,V_F], t \in (0,T]\}$,
        \item[4.] $p_v(V_R^-,t)$ and $p_v(V_R^+,t)$ are well defined for $t\in(0,T]$,
        \item[5.] For $t\in (0,T]$, equation \eqref{eq:problem2} is satisfied,
        \item[6.] $p(v,0)=p^0(v)$ for $v\in [V_{\min},V_R)\cup(V_R,V_F] $.
    \end{itemize}
\end{definition}
In this paper, we consider classical solutions of \eqref{eq:problem2} which additionally satisfy \eqref{eq:leftd}. Having explicitly defined the classical solution of \eqref{eq:problem2}, we can now move on to discuss the weak solution.

If $p(v,t)$ is the classical solution of \eqref{eq:problem2} , weak formulation of \eqref{eq:problem2} is obtained by multiplying \eqref{eq:problem2} with some test function $\phi \in C^{\infty}([V_{\min}, V_F])$ and integrating over $[V_{\min}, V_F]$
\begin{equation}
    \label{variational_1}
    \int_{V_{\min}}^{V_{F}} \left(
    \partial_{t}p+\partial_{v}(hp)-a\partial_{v v}p\right)\phi dv =0.
\end{equation}
Integrating by parts in intervals $[V_{\min},V_R]$ and $[V_R,V_F]$ respectively, we obtain 
\begin{equation}
    \label{eq:int_by_part}
\begin{aligned}
    &\int_{V_{\min}}^{V_{F}} \left(\partial_tp \phi-hp\partial_v\phi+a\partial_vp\partial_v\phi\right) dv\\
    +&\left(hp\phi|_{V_{\min}}^{V_R^-}+hp\phi|_{V_R^+}^{V_F}\right)-\left(a\partial_vp\phi|_{V_{\min}}^{V_R^-}+a\partial_vp\phi|_{V_{V_R^+}}^{V_F}\right)=0.
\end{aligned}
\end{equation}
By substituting the boundary conditions in \eqref{eq:problem2} and \eqref{eq:leftd}, \eqref{eq:int_by_part} can be simplified as
\begin{equation}
    \label{variational_2}
    \int_{V_{\min}}^{V_{F}} \left(\partial_tp \phi-hp\partial_v\phi+a\partial_vp\partial_v\phi\right) dv+a\partial_vp(V_F)\left(\phi(V_R)-\phi(V_F)\right)=0.
\end{equation}
The above derivation helps to formally introduce the definition of the weak solution of \eqref{eq:problem2}.
\begin{definition}[weak solution] \label{weak_solution}
    The variational space appropriate for the present case is
    \begin{equation}
        \label{eq:variational_space}
        \mathbb{H}^{1}_0(V_{\min},V_F)=\{p\in \mathbb{H}^{1}(V_{\min},V_F):p|_{V_{\min}}=p|_{V_F}=0\}.
    \end{equation}
    We say  $p(v,t)\in  C^{1}([0,T];\, \mathbb{H}^{1}_0(V_{\min},V_F)) $ is a weak solution of \eqref{eq:problem2}  if for any test function $\phi(v) \in \mathbb{H}^{1}(V_{\min},V_F)$, \eqref{variational_2} holds for $\forall t\in (0,T]$ and $p(v,0)=p^0(v)$. 
\end{definition}
The weak solution in Definition \ref{weak_solution} still inherits the essence of the original problem \eqref{eq:problem2}, and the relation between the weak solution and the classical solution is established in the following.
\begin{theorem} [Relation with the classical solution]
If $p(v,t)$ is a classical solution of \eqref{eq:problem2} in the time interval $(0, T]$ which also satisfies \eqref{eq:leftd}, then it is a weak solution of \eqref{eq:problem2} in the same time interval. Conversely, if $p(v,t)$ is a weak solution of \eqref{eq:problem2} in the time interval $(0, T]$ and additionally we assume that $$p(v,t) \in C^{1}\left((0,T];\,C^{2}\left([V_{\min},V_R)\cup(V_R,V_F]\right)\right) $$ satisfies $p(V_R^-,t)=p(V_R^+,t)$ and the one-sided derivatives of $p(v,t)$ exist at each side of $V_R$ for all $t\in(0,T] $, then it is a classical solution of \eqref{eq:problem2} in the same time interval and it satisfies \eqref{eq:leftd}.
\end{theorem}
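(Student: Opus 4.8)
The plan is to prove the two implications separately, and in both directions the engine is integration by parts together with the substitution of the dynamic boundary conditions. The forward direction (classical $\Rightarrow$ weak) is essentially a verification that the derivation presented just before Definition \ref{weak_solution} is rigorous. First I would observe that a classical solution satisfying \eqref{eq:leftd} has enough regularity — items 1--6 of Definition \ref{class_solution} guarantee that $p(\cdot,t)\in\mathbb{H}^1_0(V_{\min},V_F)$ (continuity across $V_R$ plus the vanishing Dirichlet data give membership in $\mathbb{H}^1_0$) and that the time regularity $p\in C^1([0,T];\mathbb{H}^1_0)$ holds. Then, for any $\phi\in\mathbb{H}^1(V_{\min},V_F)$, I would multiply the PDE by $\phi$, integrate separately over $[V_{\min},V_R]$ and $[V_R,V_F]$, and integrate by parts exactly as in \eqref{eq:int_by_part}. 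The only subtlety is collecting the boundary terms at $V_R^{\pm}$: using $p(V_R^-,t)=p(V_R^+,t)$ the drift terms $hp\phi$ at $V_R$ cancel, while the jump condition $\partial_v p(V_R^-,t)=\partial_v p(V_R^+,t)+N(t)/a$ together with $N(t)=-a\partial_v p(V_F^-,t)$ converts the diffusive boundary terms into the single interface contribution $a\partial_v p(V_F)\left(\phi(V_R)-\phi(V_F)\right)$ appearing in \eqref{variational_2}. A density argument extends the identity from $\phi\in C^\infty$ to all $\phi\in\mathbb{H}^1$.

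For the converse (weak $\Rightarrow$ classical) I would run the integration by parts backwards. Under the stated extra regularity hypothesis $p\in C^1((0,T];C^2([V_{\min},V_R)\cup(V_R,V_F]))$, together with the assumed continuity $p(V_R^-,t)=p(V_R^+,t)$ and existence of one-sided derivatives at $V_R$, all the pointwise objects in Definition \ref{class_solution} (items 1--4 and 6) are well defined, so the task reduces to establishing item 5, namely that the strong equation and the boundary/interface conditions hold. Starting from \eqref{variational_2}, I would integrate by parts in the reverse direction on each subinterval to return to a form like \eqref{eq:int_by_part}, thereby producing
\begin{equation}
    \int_{V_{\min}}^{V_F}\left(\partial_t p+\partial_v(hp)-a\partial_{vv}p\right)\phi\,dv + (\text{boundary and interface terms}) = 0
\end{equation}
valid for all $\phi\in\mathbb{H}^1$. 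The standard localization step is to first restrict to test functions $\phi\in C_c^\infty$ supported away from $V_{\min},V_R,V_F$; for these the boundary and interface terms vanish, and the fundamental lemma of the calculus of variations forces the PDE $\partial_t p+\partial_v(hp)-a\partial_{vv}p=0$ to hold pointwise on each open subinterval. With the bulk equation known, the remaining boundary and interface terms must themselves vanish for every admissible $\phi$, and by choosing $\phi$ with prescribed independent values of $\phi(V_R)$, $\phi(V_F)$, and $\partial_v\phi$ at the endpoints one reads off, in turn, the jump relation for $\partial_v p$ at $V_R$, the identity $N(t)=-a\partial_v p(V_F^-,t)$, and finally the Neumann condition \eqref{eq:leftd} at $V_{\min}$.

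The main obstacle, and the step I would treat most carefully, is the bookkeeping of the boundary terms at the interface $V_R$ in the converse direction: one must verify that the freedom in choosing $\phi(V_R)$, $\phi(V_F)$, $\partial_v\phi(V_{\min})$ and $\partial_v\phi(V_R^{\pm})$ is genuinely sufficient to isolate each condition separately, rather than recovering only a single linear combination of them. Concretely, after the PDE is established I expect the residual boundary functional to be an expression linear in the traces of $\phi$ and $\partial_v\phi$ at $V_{\min},V_R^{\pm},V_F$; since $\mathbb{H}^1$ test functions can realize these traces (and endpoint derivative values, once one enlarges to test functions that are $C^1$ up to the boundary) independently, vanishing for all $\phi$ yields each condition in isolation. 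A secondary technical point worth flagging is the justification of exchanging the time derivative with the spatial integral and of the reverse integration by parts under only $C^2$-in-space regularity on the half-open subintervals; the assumed one-sided limits at $V_R$ are exactly what make the interface boundary terms meaningful, so I would state the limiting argument at $V_R^{\pm}$ explicitly rather than leave it implicit.
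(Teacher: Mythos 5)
Your proposal is correct and follows essentially the same route as the paper: the forward direction is just the derivation of \eqref{variational_2}, and the converse reverses the integration by parts, localizes with test functions vanishing at the special points to obtain the PDE on $(V_{\min},V_R)\cup(V_R,V_F)$, and then exploits the freedom in the test-function traces to recover the remaining conditions, exactly as the paper does with its spaces $\mathbb{V}_1,\mathbb{V}_2,\mathbb{V}_3$. One minor point of bookkeeping: after the reverse integration by parts the residual boundary functional involves only the point values $\phi(V_{\min})$ and $\phi(V_R)$ (no $\partial_v\phi$ traces, and $\phi(V_F)$ cancels because $p(V_F)=0$), so one recovers the combined relation $\partial_v p(V_R^-,t)-\partial_v p(V_R^+,t)+\partial_v p(V_F,t)=0$ rather than the jump condition and $N(t)=-a\partial_v p(V_F^-,t)$ separately --- which is precisely what Definition \ref{class_solution} requires, since $N(t)$ is defined there by that formula.
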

\begin{proof}
The first part of the theorem can obviously be proved by the derivation of the weak solution. 

For the other direction, let $p(v,t)$ be a weak solution of \eqref{eq:problem2} in the time interval $(0, T]$, and $p$ satisfies all the additional assumptions in the statement. We aim to prove that $p(v,t)$ is a classical solution in Definition \ref{class_solution}, and satisfies \eqref{eq:leftd}.

By the definition of the weak solution and the additional conditions it satisfies, it is straightforward to show that the solution $p$ meets the first four and the last criteria laid out in Definition \ref{class_solution}. In particular, the smoothness assumption at $V_F$ (from the left-hand side) implies the continuity of $N(t)$.  In the following, we will thoroughly demonstrate that $p$ conforms to the fifth item of Definition \ref{class_solution} and \eqref{eq:leftd}. By integration by parts, \eqref{variational_2} can be rewritten as 
\begin{equation}
\label{eq:int_by_part2}
\begin{aligned}
    &\int_{V_{\min}}^{V_{F}} \left(\partial_{t}p+\partial_{v}(hp)-a\partial_{v v}p\right)\phi dv\\ 
    -&\left(hp\phi|_{V_{\min}}^{V_R^-}+hp\phi|_{V_R^+}^{V_F}\right)+\left(a\partial_vp\phi|_{V_{\min}}^{V_R^-}+a\partial_vp\phi|_{V_{V_R^+}}^{V_F})+a\partial_vp(V_F)(\phi(V_R)-\phi(V_F)\right)=0.
\end{aligned}
\end{equation}
The definition of $\mathbb{H}^{1}_0(V_{\min},V_F)$ states that $p(V_{\min})=p(V_F)=0$, thus
\begin{equation}
    \label{eq:int_by_part3}
    \begin{aligned}
        &\int_{V_{\min}}^{V_{F}} \left(\partial_{t}p+\partial_{v}(hp)-a\partial_{v v}p\right)\phi dv-h(V_R)\phi(V_R)\left(p(V_R^-)-p(V_R^+)\right)\\
        +&a\phi(V_R)\left(\partial_vp(V_R^-)-\partial_vp(V_R^+)+\partial_vp(V_F)\right)-a\partial_vp(V_{\min})\phi(V_{\min})=0,
    \end{aligned}
\end{equation}
The key to the proof is selecting different test function spaces to simplify \eqref{eq:int_by_part3}, such that the equations identified in \eqref{eq:problem2} and the boundary conditions delineated in \eqref{eq:problem2} and \eqref{eq:leftd} are successively established. First, taking the test functions $\phi \in \mathbb{V}_1(V_{\min},V_F)=\{\phi \in \mathbb{H}^1(V_{\min},V_F): \phi(V_R)=\phi(V_{\min})=0\}$, \eqref{eq:int_by_part3} reduce to
\begin{equation}
    \int_{V_{\min}}^{V_{F}} \left(\partial_{t}p+\partial_{v}(hp)-a\partial_{v v}p\right)\phi dv=0.
\end{equation}
 Since $p\in C^{2}\left([V_{\min},V_R)\cup(V_R,V_F]\right)$, $\partial_{t}p+\partial_{v}(hp)-a\partial_{v v}p$ is continuous on the interval $(V_{\min},V_R)\cup(V_R,V_F)$, and it can be inferred from the arbitrariness of $\phi$ that $p$ satisfies
\begin{equation}
    \label{pde_equation}
    \partial_{t}p+\partial_{v}(hp)-a\partial_{v v}p=0, \quad \forall v\in (V_{\min},V_R)\cup(V_R,V_F).
\end{equation}
This verifies that the equation in \eqref{eq:problem2} holds for  $p(v,t)$ within the interval, and the next step in the proof is that the weak solution satisfies the boundary conditions in \eqref{eq:problem2} and \eqref{eq:leftd}. By the definition of trial function space $\mathbb{H}^{1}_0(V_{\min},V_F)$, it is easy to see that $p(v,t)$ satisfies the following boundary conditions
\begin{equation}\label{eq:Dirichlet_boundary}
\begin{aligned}
    &p(V_{\min},t)=p(V_F,t)=0.\\
\end{aligned}
\end{equation}
Changing the test functions $\phi \in \mathbb{V}_2(V_{\min},V_F)=\{\phi \in \mathbb{H}^1(V_{\min},V_F): \phi(V_R)=0\}$ and using \eqref{pde_equation}, \eqref{eq:int_by_part3} can be written as
\begin{equation}
    a\partial_vp(V_{\min})\phi(V_{\min})=0.
\end{equation}
Since the arbitrariness of $\phi(V_{\min})$, we obtain
\begin{equation}\label{eq:left_boundary}
    \partial_vp(V_{\min})=0.
\end{equation}
Similarly, changing the test functions $\phi \in \mathbb{V}_3(V_{\min},V_F)=\{\phi \in \mathbb{H}^1(V_{\min},V_F): \phi(V_{\min})=0\}$ again and using \eqref{pde_equation}, \eqref{eq:int_by_part3} is reduced into
\begin{equation}
    -h(V_R)\phi(V_R)\left(p(V_R^-)-p(V_R^+)\right)+a\phi(V_R)\left(\partial_vp(V_R^-)-\partial_vp(V_R^+)+\partial_vp(V_F)\right)=0.
\end{equation}
Since $p(V^-_R)=p(V^+_R) $ and the arbitrariness of $\phi(V_R)$, we deduce
\begin{equation}
    \partial_vp(V_R^-)-\partial_vp(V_R^+)+\partial_vp(V_F)=0.
\end{equation}
By the definition of trace, $p(v,t)$ satisfies boundary conditions in \eqref{eq:problem2} and \eqref{eq:leftd}. Now, we have proved that $p(v,t)$ satisfies the fifth item in Definition \ref{class_solution}.  To conclude, we have shown that $p(v,t)$ is a classical solution of equation \eqref{eq:problem2}

\end{proof}

\section{Numerical scheme and analysis}\label{sec:scheme}
In this section, we present a spectral approximation for the weak solution to the Fokker-Planck equation \eqref{eq:problem2} and construct a fully discrete numerical scheme. Numerical solutions are sought in a specific function space in which the functions satisfy the boundary conditions and can be determined by solving the derived equation system after specifying the test function space.
\subsection{A fully discrete numerical scheme based on Legendre spectral method}\label{sec:fully_discrete_scheme}
In this part, we construct the numerical scheme of the Fokker-Planck equation \eqref{eq:problem2}, which is implemented in two steps. First, the spectral approximation is used for space discretization, resulting in a system of ordinary differential equations; second, a semi-implicit scheme is applied for time discretization. The spectral method is established such that the numerical solution inherently satisfies the boundary conditions. Formally, the approximate variational problem is
\begin{equation}
    \label{var_problem}
    \begin{cases}
        \text{Find } p\in \mathrm{W} \text{ such that}\\
        \int_{V_{\min}}^{V_{F}} \left(
    \partial_{t}p+\partial_{v}(hp)-a\partial_{v v}p\right)\phi =0,\quad \forall \phi \in \mathrm{V},
    \end{cases}
\end{equation}
where $\mathrm{W}$ is the trial function space and $\mathrm{V}$ is the test function space. Compared to Definition \ref{weak_solution}, the variational problem \eqref{var_problem} requires a more complex trial function space, which will be further described below. The specific form of the test function space will be introduced in Section \ref{sec:stability}.
\subsubsection{Construction of trial function space and space discretization}
A challenging aspect of the spectral method is constructing the trial function space $\mathrm{W}$ so as to satisfy the complex boundary conditions, including the discontinuous derivative of the density function and the dynamic boundary. To that end, the trial function space should be a subset of $\mathbb{H}^1_0$ wherein strong boundary derivatives can be defined. Specifically, the polynomial space that fulfills the boundary conditions in \eqref{eq:problem2} and \eqref{eq:leftd} can be used as the trial function space. That is $\mathrm{W} \subseteq \mathrm{P}_{\infty}(V_{\min}, V_R) + \mathrm{P}_{\infty}(V_{R}, V_F)$ and for all $ p\in \mathrm{W}$, it holds that
\begin{equation}
    \label{boudary_condition}
    \begin{cases}
        p(V_{\min})=\partial _vp(V_{\min})=0,\\
        p(V_F)=0,\\
        p(V^-_R)=p(V^+_R),\\
        \partial _vp(V^-_R)=\partial _vp(V^+_R)+\partial _vp(V_F),
    \end{cases}
\end{equation}
where $\mathrm{P}_{\infty}(a,b)$ is the set of all real polynomials defined on the interval $(a, b)$. 
With integration by parts and the boundary conditions \eqref{boudary_condition} in the trial function space, the solution to the above variational problem \eqref{var_problem} agrees with the weak solution specified in Definition \ref{weak_solution}.

Let $\{\psi_k\}_{k=0}^{\infty}$ be a set of basis functions of $\mathrm{W}$. The approximate solution of problem \eqref{eq:problem2} can be expanded as
\begin{equation}
    \label{eq:approximate_solution1}
    p(v,t)=\sum_{k=0}^{\infty}\hat{u}_k(t)\psi_k(v).
\end{equation}
The essence of constructing the trial function space $\mathrm{W}$ is to determine the specific form of its basis functions $\{\psi_k\}_{k=0}^{\infty}$. This is accomplished by dividing the interval into two segments by the discontinuity point $V_R$, utilizing a fixed number of basis functions to meet the dynamic boundary conditions, and employing basis functions with homogeneous boundary conditions for each segment to improve accuracy. That is
\begin{equation}
    \mathrm{W}=\mathrm{W}_1+\mathrm{W}_2,
\end{equation}
where $\mathrm{W}_1$ is a finite-dimensional space that handles the conditions in \eqref{boudary_condition}, and $\mathrm{W}_2$ enhances accuracy within the interval and satisfies the homogeneous conditions of points $V_{\min}$, $V_R$, and $V_F$, which is
\begin{equation}
    \label{eq:W2condition}
    \begin{cases}
        p(V_{\min})=\partial _vp(V_{\min})=0,\\
         p(V_R^-)=\partial _vp(V_R^-)=0,\\
         p(V_R^+)=\partial _vp(V_R^+)=0,\\
        p(V_F)=\partial _vp(V_F)=0,\\
    \end{cases}\qquad \forall p \in \mathrm{W}_2,
\end{equation}

For simplicity, it is preferable to keep the dimension of $\mathrm{W}_1$ as low as possible. In the case of taking into account the function value and first derivative value, there are eight degrees of freedom at the boundary, comprising of the function value and derivative value at $V_{\min}$, $V_F$, and both sides of $V_R$. Since the five conditions in \eqref{boudary_condition} have to be satisfied, there are three degrees of freedom remaining. Therefore, $\mathrm{W}_1$ can be spanned by three basis functions
\begin{equation}
    \mathrm{W}_1=\text{span}\{g_1,g_2,g_3\},
\end{equation}
where
\begin{equation}
    g_1\Rightarrow
    \begin{gathered}
        \begin{cases}
            g_1(V_{\min})=0,\\
            g_1(V_R)=1,\\
            \partial_v g_1(V_{\min})=0,\\
            \partial_v g_1(V_R)=0,
        \end{cases}\quad v\in(V_{\min},V_R),\qquad
        \begin{cases}
            g_1(V_R)=1,\\
            g_1(V_F)=0,\\
            \partial_v g_1(V_R)=0,\\
            \partial_v g_1(V_F)=0,
        \end{cases}\quad v\in(V_R,V_F).
    \end{gathered}
\end{equation}
\begin{equation}
    g_2\Rightarrow
    \begin{gathered}
        \begin{cases}
            g_2(V_{\min})=0,\\
            g_2(V_R)=0,\\
            \partial_v g_2(V_{\min})=0,\\
            \partial_v g_2(V_R)=1,
        \end{cases}\quad v\in(V_{\min},V_R), \qquad
        \begin{cases}
            g_2(V_R)=0,\\
            g_2(V_F)=0,\\
            \partial_v g_2(V_R)=1,\\
            \partial_v g_2(V_F)=0,
        \end{cases}\quad v\in(V_R,V_F).
    \end{gathered}
\end{equation}
\begin{equation}
    g_3\Rightarrow
    \begin{gathered}
    \begin{cases}
            g_3(V_{\min})=0,\\
            g_3(V_R)=0,\\
            \partial_v g_3(V_{\min})=0,\\
            \partial_v g_3(V_R)=0,
        \end{cases}\quad v\in(V_{\min},V_R), \qquad
    \begin{cases}
        g_3(V_R)=0,\\
        g_3(V_F)=0,\\
        \partial_v g_3(V_R)=1,\\
        \partial_v g_3(V_F)=1,
    \end{cases}\qquad v\in(V_R,V_F).\\
    \end{gathered}
\end{equation}
 The specific form of the basis functions in $\mathrm{W}_1$ are presented in Appendix \ref{app:basis}.

\begin{figure}[!htb]
    \centering
        \begin{minipage}[c]{0.8\textwidth}
            \centering
            \includegraphics[width=1\textwidth]{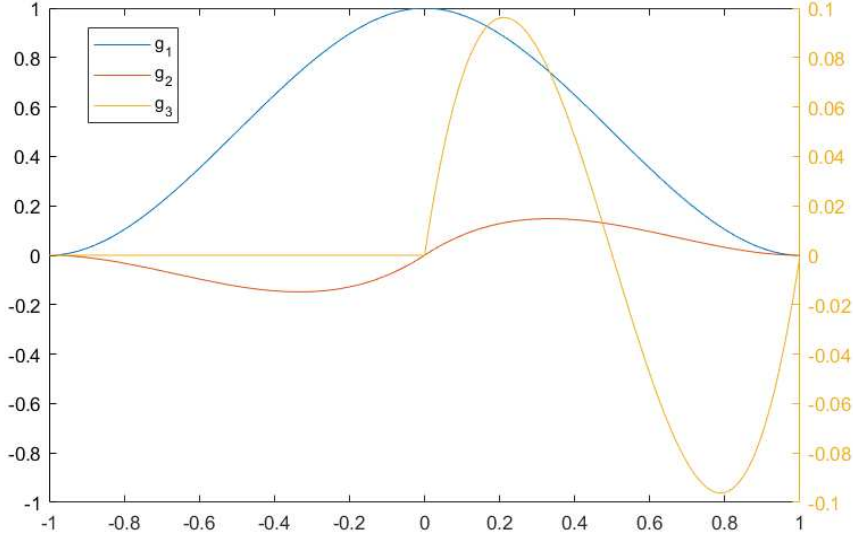}
        \end{minipage}
         \caption{The basis functions of $p_3$ with Equation parameters $V_{\min}=-1,V_R=0,V_F=1$. Here, $g_1$ and $g_2$ are measured using the left axis, while $g_3$ is measured with the right axis.}
        \label{fig:g}
\end{figure}
The specific illustration of $g_1,g_2,g_3$ are shown in Figure \ref{fig:g}. After the basis function of $\mathrm{W}_1$ is determined in this way, we set
\begin{equation}
    \label{eq:w1_expansion}
    p_3=\sum_{k=1}^3 \lambda_kg_k \quad \in \mathrm{W}_1.
\end{equation}
The boundary conditions can be well satisfied by adjusting the coefficients of $g_1,g_2,g_3$ in the following way,
\begin{equation}
    \label{eq:lambda}
    \begin{gathered}
        \begin{cases}
            p(V_R^-)=p_3(V_R^-)=\lambda_1,\\
            p(V_R^-)=p_3(V_R^+)=\lambda_1,
        \end{cases}\qquad
        \begin{cases}
            \partial_vp(V_R^-)=\partial_vp_3(V_R^-)=\lambda_2,\\
            \partial_vp(V_R^-)=\partial_vp_3(V_R^+)=\lambda_2+\lambda_3,\\
            \partial_vp(V_R^-)=\partial_vp_3(V_F^-)=\lambda_3,
        \end{cases}
    \end{gathered}
\end{equation}
where $p$ denotes the numerical solution mentioned in \eqref{eq:approximate_solution1}. 

The construction of the $\mathrm{W}_2$ space is motivated by spectral methods for solving general homogeneous boundary value problems.  According to \eqref{eq:W2condition}, the interval $(V_{\min},V_F) $ is divided into two segments by $V_R$ naturally. Assuming $I_L=(V_{\min},V_R)$ and $I_R=(V_R,V_F)$, we further denote
\begin{equation}
    \label{eq:X_ab}
        \mathrm{X}_{(a,b)}=\left\{\varphi \in P_{\infty}(a,b) : \varphi(a)=\varphi(b)=\varphi'(a)=\varphi'(b)=0   \right\}.\\
\end{equation}
$\mathrm{X}_{(a,b)}$ represents the set of real polynomials defined on the interval $(a, b)$, where the function value and derivative are zero at boundary points. So we can divide $\mathrm{W}_2$ into two parts 
\begin{equation}
    \mathrm{W}_2=\mathrm{X}_{(V_{\min},V_R)}+ \mathrm{X}_{(V_R,V_F)}.
\end{equation}

In the spectral methods, in order to minimize the interaction of basis functions in the frequency space, the basis functions should take the form of adjacent orthogonal polynomials \cite{shen1994efficient}. Therefore, it is reasonable to use a compact combination of Legendre polynomials as basis functions of $\mathrm{X}_{(a,b)}$, namely,
\begin{equation}
    \hat{h}_k=\mathcal{H}_k+\alpha_k\mathcal{H}_{k+1}+\beta_k\mathcal{H}_{k+2}+\gamma_k\mathcal{H}_{k+3}+\eta_k\mathcal{H}_{k+4},\quad k=0,1,2,...,
\end{equation}
where $\mathcal{H}_k$ is the scaling of the kth-degree Legendre polynomial $L_k$
\begin{equation}
    \mathcal{H}_k(v)=L_k(x),\qquad x=\frac{v-\left(\frac{a+b}{2}\right)}{\frac{b-a}{2}},
\end{equation}
and the parameter $\{\alpha_k,\beta_k,\gamma_k,\eta_k\}$ are chosen to satisfy the boundary conditions in \eqref{eq:X_ab}
\begin{equation}
    \alpha_k=0,\, \beta_k=-\frac{4k+10}{2k+7},\,\gamma_k=0,\,\eta_k=\frac{2k+3}{2k+7}.
\end{equation}

After constructing the trial function space, spatial discretization will be discussed, yielding the system of ordinary differential equations for the coefficients. Let $\{h_k\}_{k=0}^{\infty}$ be the basis functions of $\mathrm{W}_2$. Then the approximate solution \eqref{eq:approximate_solution1} can be rewritten as
\begin{equation}
    \label{eq:approximate_solution2}
    p(v,t)=\sum_{k=0}^{\infty}u_k(t)h_k(v)+\sum_{k=1}^3\lambda_k(t)g_k(v).
\end{equation}
The basal functions in \eqref{eq:approximate_solution2} correspond to ones in \eqref{eq:approximate_solution1} in the following way
\begin{equation}
    \{\psi_k\}_{k=0}^{\infty}=\{g_k\}_{k=1}^{3}+\{h_k\}_{k=0}^{\infty}.
\end{equation}
And the expansion coefficients $\{u_k(t)\}_{k=0}^{\infty},\{\lambda_k(t)\}_{k=1}^3$ are to be determined.
Assuming the initial value is to satisfy the boundary conditions \eqref{boudary_condition}, the initial expansion coefficients $\{u_k(0)\}_{k=0}^{\infty},\{\lambda_k(0)\}_{k=1}^3$ can be obtained by the best squares approximation,
\begin{equation}
    \label{eq:initial_vector}
    \int_{V_{\min}}^{V_F} \left(\sum_{k=0}^{\infty}u_k(0)h_k(v)+\sum_{k=1}^3\lambda_k(t)g_k(v)\right) \phi_j dv=\int_{V_{\min}}^{V_F} p^0(v)\phi_j dv, \qquad \forall \phi_j \in \mathrm{V}.
\end{equation}
For a properly defined test function space, the solvability of the \eqref{eq:initial_vector} is guaranteed by the Gram-Schmidt orthogonalization of the basis functions. Note again that the specific form of the test function space is discussed in Section \ref{sec:stability}. We denote the initial value vector as
\begin{equation}
    \label{eq:initial_value}
    \mathbf{P^0}=(\lambda_1(0),\lambda_2(0),\lambda_3(0),u_1(0),u_2(0),...)^T.
\end{equation}

It should be noted that while constructing the basis functions, we assume that the value of $N(t)$ is already known. In fact, $N(t)$ is self-consistently determined in the dynamic process, and $N(t)$ is part of the degrees of freedom of the solution. It follows from \eqref{eq:lambda} that
\begin{equation}
    \partial_vp(V_F,t)=\lambda_3(t).
\end{equation}
One can rewrite the mean firing rate using \eqref{eq:ha} and \eqref{eq:Nt}
\begin{equation}
    \label{Nt1}
    N(t)=-\frac{a_0\lambda_3(t)}{1+a_1\lambda_3(t)}.
\end{equation}
Define
\begin{equation}
    \label{eq:operator}
    \mathcal{L}p(v,t)=\partial_{t}p-\partial_v(vp)-\left(b\frac{a_0\lambda_3(t)}{1+a_1\lambda_3(t)}\right)\partial_{v}p-\left(a_0-a_1\frac{a_0\lambda_3(t)}{1+a_1\lambda_3(t)}\right)\partial_{v v}p.
\end{equation}
The expansion coefficients $\{u_k(t)\}_{k=0}^{\infty},\,\{\lambda_k(t)\}_{k=1}^{3}(t>0)$ in \eqref{eq:approximate_solution2} can be determined by variational problem \eqref{var_problem} with using the mean firing rate $N(t)$ in \eqref{Nt1}:
\begin{equation}
    \label{eq:nonlinear_system1}
    \begin{cases}
        \text{Find } p\in  \mathrm{W} \text{ such that}\\
        (\mathcal{L}p,\phi_j)=0,\quad \forall \phi_j \in \mathrm{V},
    \end{cases}
\end{equation}
where $(\cdot,\cdot)$ is the inner product of the usual $L^2$ space.

The nonlinear system of ordinary differential equations of the above scheme is obtained by substituting \eqref{eq:approximate_solution2} into \eqref{eq:nonlinear_system1}. More precisely, setting
\begin{equation}
    \begin{aligned}
        &\mathbf{P}=(\lambda_1(t),\lambda_2(t),\lambda_3(t),u_1(t),u_2(t),...)^T,\\
        &s_{jk}=\begin{cases}
            (g_k,\phi_j),\qquad &1\leq k \leq3,\\
            (h_{k-4},\phi_j),& k\geq4.
        \end{cases}, &S=(s_{jk})_{j,k=1,2,...},\\
        &a_{jk}=\begin{cases}
            (\partial_v(vg_k),\phi_j),\qquad &1\leq k \leq3,\\
            (\partial_v(vh_{k-4}),\phi_j),& k\geq4.
        \end{cases}, &A=(a_{jk})_{j,k=1,2,...},\\
        &b_{jk}=\begin{cases}
            (\partial_vg_k,\phi_j),\qquad &1\leq k \leq3,\\
            (\partial_vh_{k-4},\phi_j),& k\geq4.
        \end{cases}, &B=(b_{jk})_{j,k=1,2,...},\\
        &c_{jk}=\begin{cases}
            (\partial_{vv}g_k,\phi_j),\qquad &1\leq k \leq3,\\
            (\partial_{vv}h_{k-4},\phi_j),& k\geq4.
        \end{cases}, &C=(c_{jk})_{j,k=1,2,...}.\\
    \end{aligned}
\end{equation}
The nonlinear system \eqref{eq:nonlinear_system1} becomes
\begin{equation}
    \label{eq:nonlinear_sde}
    S\partial_t\mathbf{P}=\left(A+\left(b\frac{a_0\lambda_3(t)}{1+a_1\lambda_3(t)}\right)B+\left(a_0-a_1\frac{a_0\lambda_3(t)}{1+a_1\lambda_3(t)}\right)C\right)\mathbf{P}.
\end{equation}
After the spatial discretization, the solution of problem \eqref{eq:problem2} converts into the solution of the nonlinear ordinary differential equation system of initial value problem \eqref{eq:nonlinear_sde}\eqref{eq:initial_value}.

\subsubsection{Fully discrete numerical scheme}

To finish the construction of the numerical scheme, we need to truncate the approximate solution \eqref{eq:approximate_solution2} to a finite-dimensional one and perform time discretization. The finite-dimensional form of \eqref{eq:X_ab} is denoted as
\begin{equation}
\label{eq:X_N}
    \mathrm{X}_{N(a,b)}=\left\{\varphi \in P_{N+3}(a,b) : \varphi(a)=\varphi(b)=\varphi'(a)=\varphi'(b)=0   \right\},
\end{equation}
where $P_N(a,b)$ is the set of all real polynomials of degree no more than $N$ and the dimension of $P_N{(a,b)}$ is $N+1$. It is evident that a  non-trivial polynomial with the homogeneous boundary conditions in \eqref{eq:X_N} must be of at least fourth degree, thus leading to a reduced dimension of the set in \eqref{eq:X_N}. The polynomial space in \eqref{eq:X_N} is selected as $P_{N+3}(a,b)$ so that the dimension of the $\mathrm{X}_{N(a,b)}$ space is $N$. Then the trial function space can be truncated as

\begin{equation}
    \label{eq:trial_function}
    \mathrm{W}_N=\mathrm{X}_{N(V_{\min},V_R)} + \mathrm{X}_{N(V_R,V_F)} + \mathrm{W}_1.
\end{equation}
Assuming $\{h^L_k\}_{k=0}^{N-1}$ is a set
of basis functions of $X_{N(V_{\min},V_R)}$ and $\{h^R_k\}_{k=0}^{N-1}$ is a set
of basis functions of $X_{N(V_R,V_F)}$. $\{\psi_k\}_{k=1}^{2N+3}=\{h^L_0,...,h^L_{N-1},h^R_0,...,h^R_{N-1},g_1,g_2,g_3\}$ is a basis of $\mathrm{W}_N$. Then the numerical solution $p_N(v,t)$ can be expressed as
\begin{equation}
    \label{eq:approximate_solution3}
    p_N(v,t)=\sum_{k=0}^{N-1} u_k^L(t)h^L_k(v)+\sum_{k=0}^{N-1} u_k^R(t)h^R_k(v)+\sum_{k=1}^3 \lambda_k(t)g_k(v)=\sum_{k=1}^{2N+3}\hat{u}_k(t)\psi_k(v).
\end{equation}
 The initial condition for the expansion coefficients $\{\hat{u}_{k}(0)\}_{k=0}^{2N+3}$ can be obtained by the least square approximation,
\begin{equation}
    \label{eq:initial_vector2}
    \int_{V_{\min}}^{V_F} \sum_{k=1}^{2N+3}\hat{u}_k(0)\psi_k(v) \phi_j dv=\int_{V_{\min}}^{V_F} p^0(v)\phi_j dv, \qquad \forall \phi_j \in \mathrm{V}_N.
\end{equation}

Suppose the truncated test function space is denoted by $\mathrm{V}_N$, which shall be specified later. The expansion coefficients $\{\hat{u}_k(t)\}_{k=0}^{2N+3}(t>0)$ can be determined by the semi-discrete variational formulation
\begin{equation}
    \label{eq:variational_form2}
    \begin{cases}
        \text{Find } p_N\in \mathrm{W}_{N} \text{ such that}\\
        (\mathcal{L}p_N,\phi_j)=0,\quad \forall \phi_j \in \mathrm{V}_N.
    \end{cases}
\end{equation}

For time discretization, we use a semi-implicit method. The interval $[0,T_{\text{max}}]$ is divided  into $n_t$ equal sub-intervals with size
\begin{equation}
    \Delta t=\frac{T_{\text{max}}}{n_t},
\end{equation}
and the grid points can be represented as follows
\begin{equation}
    t^{n}=n \Delta t, \qquad n=0,1,2, \cdots, n_{t}.
\end{equation}

The semi-implicit scheme of \eqref{eq:operator} is denoted by
\begin{equation}
    \label{eq:semi_implicit}
\begin{aligned}
    \tilde{\mathcal{L}}p_N(v,t^{n+1})&=\frac{p_N(v,t^{n+1})-p_N(v,t^{n})}{\Delta t}-\partial_v(vp_N(v,t^{n+1}))+bN(t^n)\partial_vp_N(v,t^{n+1})\\
    &-a(N(t^n))\partial_{vv}p_N(v,t^{n+1})=0,\qquad\qquad\qquad n=1,2,...,n_t.
\end{aligned}
\end{equation}
Note that, the mean firing rate $N(t^n)$ is treated explicitly, but the rest of the terms are implicit. Such a time discretization naturally avoids the use of a nonlinear solver. Then we can obtain the fully discrete scheme of the variational formulation \eqref{eq:variational_form2}: for each time step
\begin{equation}
    \label{eq:variational_form3}
    \begin{cases}
       \text{Find } p_N\in \mathrm{W}_{N} \text{ such that}\\
        (\tilde{\mathcal{L}}p_N,\phi_j)=0,\quad \forall \phi_j \in \mathrm{V}_N.
    \end{cases}
\end{equation}

More precisely, setting
\begin{equation}
    \label{eq:Matrix2}
    \begin{aligned}
        &\hat{\mathbf{P}}^n=(\hat{u}_1(t^n),\hat{u}_2(t^n),...,\hat{u}_{2N+3}(t^n))^T,\\
        &\hat{s}_{jk}=(\psi_k,\phi_j),\quad \hat{S}=(\hat{s}_{jk})_{k=1,...,2N+3}\\
        &\hat{a}_{jk}=(\partial_v(v\psi_k),\phi_j),\quad \hat{A}=(\hat{a}_{jk})_{k=1,...,2N+3}\\
        &\hat{b}_{jk}=(\partial_{v}\psi_k,\phi_j),\quad \hat{B}=(\hat{b}_{jk})_{k=1,...,2N+3}\\
        &\hat{c}_{jk}=(\partial_{vv}\psi_k,\phi_j),\quad \hat{C}=(\hat{c}_{jk})_{k=1,...,2N+3},
    \end{aligned}
\end{equation}
the  variational formulation \eqref{eq:variational_form3} reduces to
\begin{equation}
\label{eq:system2}
    \left(\frac{\hat{S}}{\Delta t}-\hat{A}+bN(t^n)\hat{B}-a(N(t^n))\hat{C}\right)\hat{\mathbf{P}}^{n+1}=\frac{\hat{S}}{\Delta t}\hat{\mathbf{P}}^n.
\end{equation}

\subsection{Stability and the choice of test functions} \label{sec:stability}

The dynamic boundary conditions also give rise to challenges in choosing proper finite-dimensional test function spaces.  As we shall elaborate below, the construction of the trial function is so delicate that we can not simply choose the test functions only out of accuracy. Our goal is to find test functions that result in a stable evolution system in the discrete setting, and we hope the total mass is conserved with satisfactory accuracy.


To this end, two propositions are introduced that relate the test functions to the conservation and stability of the semi-discrete scheme \eqref{eq:variational_form2} in the linear case. However, the spectral method is often not able to completely ensure the conservation of mass, therefore it is not serving as a rigid criterion. The stability of the numerical solution is instead analyzed through its long-term asymptotic behavior in the linear regime, which will be discussed in greater detail below. Following this, three different test function spaces are analyzed respectively.

When analyzing the impact of the test function space, we are to consider the semi-discrete system \eqref{eq:variational_form2}. Thanks to the definition in \eqref{eq:Matrix2}, the system can reduce to
\begin{equation}
    \label{eq:SDE_system2}
    \hat{S}\partial_t\hat{\mathbf{P}}=\left(\hat{A}-\left(bN(t)\right)\hat{B}+\left(a(N(t))\right)\hat{C}\right)\hat{\mathbf{P}},\quad \hat{\mathbf{P}}(0)=\hat{\mathbf{P}}^0,
\end{equation}
where $\hat{\mathbf{P}}=(\hat{u}_1(t),\hat{u}_2(t),...,\hat{u}_{2N+3}(t))^T,\,\hat{\mathbf{P}}^0=(\hat{u}_1(0),\hat{u}_2(0),...,\hat{u}_{2N+3}(0))^T$. For simplicity, we study the case of a linear equation that is $b=0$ and $a(N)=1$. Then the nonlinear system \eqref{eq:SDE_system2} becomes a linear system
\begin{equation}
    \label{eq:linear_system2}
    \hat{S}\partial_t \hat{\mathbf{P}}=(\hat{A}+\hat{C})\hat{\mathbf{P}}, \quad \hat{\mathbf{P}}(0)=\hat{\mathbf{P}}^0.
\end{equation}
Considering the unique solvability of ordinary differential equations, we assume that the matrices $\hat{S}$, $\hat{A}$, and $\hat{C}$ are square matrices of order $2N+3$ and the matrix $\hat{S}$ is invertible. Let $\hat{K}=\hat{S}^{-1}(\hat{A}+\hat{C})$, then the system \eqref{eq:linear_system2} can be rewritten as
\begin{equation}
\label{eq:linear_system3}
    \hat{\mathbf{P}}_t=\hat{K}\hat{\mathbf{P}}.
\end{equation}
Let $\hat{\mathbf{P}}^{\infty}=(\hat{u}_1^{\infty},\hat{u}_2^{\infty},...,\hat{u}_{2N+3}^{\infty})^T$ be the steady-state solution of the equation. It holds that
\begin{equation}
    \label{eq:steady}
    \hat{K}\hat{\mathbf{P}}^{\infty}=0.
\end{equation}
The steady-state equation \eqref{eq:steady} has a nonzero solution if and only if the matrix $\hat{K}$ has at least one zero eigenvalue. With a prescribed test function space, the properties of the scheme can be assessed by inspecting the elements of matrix $\hat{K}$, allowing us to fully characterize the system's behavior. The following propositions serve to elucidate this connection.

\begin{proposition}[mass conservation]\label{prop1}
    Consider the Fokker-Planck equation \eqref{eq:problem2} with $a=1, b=0$ and the semi-discrete scheme \eqref{eq:linear_system3} where the dimension of test function space $V_N$ is $2N+3$. The following relations hold:
\begin{enumerate}
    \item Matrix $\hat{K}$ has zero eigenvalue if and only if the test function space $\mathrm{V}_N$ contains constant functions.
    \item If the matrix $\hat{K}$ has zero eigenvalue, then the total mass of the numerical solution solved by system \eqref{eq:linear_system2} does not change with time. That is
    \begin{equation}
        \int_{V_{\min}}^{V_F}\partial_tp_N(v,t) dv=0,
    \end{equation}
    where $p_N(v,t)$ is defined in \eqref{eq:approximate_solution3}.
\end{enumerate}
\end{proposition}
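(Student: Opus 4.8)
The plan is to recast the eigenvalue condition as a degeneracy statement about the bilinear pairing encoded by the matrices in \eqref{eq:Matrix2}. Since $\hat{S}$ is invertible, $\hat{K}=\hat{S}^{-1}(\hat{A}+\hat{C})$ has a zero eigenvalue precisely when $\hat{A}+\hat{C}$ is singular. With $a=1,\ b=0$ the entries read $(\hat{A}+\hat{C})_{jk}=(\mathcal{A}\psi_k,\phi_j)$, where I abbreviate $\mathcal{A}\psi:=\partial_v(v\psi)+\partial_{vv}\psi$. Thus $\hat{A}+\hat{C}$ is singular if and only if the pairing $B(\psi,\phi):=(\mathcal{A}\psi,\phi)$ is degenerate on $\mathrm{W}_N\times\mathrm{V}_N$, that is, there is a nonzero $\phi\in\mathrm{V}_N$ with $B(\psi,\phi)=0$ for every $\psi\in\mathrm{W}_N$ (equivalently, since the matrix is square, a nonzero $\psi\in\mathrm{W}_N$ with $\mathcal{A}\psi\perp\mathrm{V}_N$).

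The first substantive step is the boundary identity
\begin{equation*}
\int_{V_{\min}}^{V_F}\mathcal{A}\psi\,dv=\int_{V_{\min}}^{V_F}\bigl(\partial_v(v\psi)+\partial_{vv}\psi\bigr)\,dv=0,\qquad\forall\,\psi\in\mathrm{W}_N,
\end{equation*}
which says $B(\psi,1)=0$, i.e. the constant function annihilates the range $\mathcal{R}:=\mathcal{A}(\mathrm{W}_N)$. I would prove it by splitting the integral at $V_R$ and invoking the conditions in \eqref{boudary_condition}: continuity $\psi(V_R^-)=\psi(V_R^+)$ together with $\psi(V_{\min})=\psi(V_F)=0$ makes $\int\partial_v(v\psi)\,dv$ telescope to zero, while $\partial_v\psi(V_{\min})=0$ together with the dynamic jump relation $\partial_v\psi(V_R^-)-\partial_v\psi(V_R^+)=-\partial_v\psi(V_F)$ (as read off from \eqref{eq:lambda}) cancels $\int\partial_{vv}\psi\,dv$.

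The forward implication of part (1) is then immediate: if $\mathrm{V}_N$ contains the constants, expand $1=\sum_j c_j\phi_j$ with $(c_j)\neq 0$; by the identity above $\sum_j c_j(\hat{A}+\hat{C})_{jk}=B(\psi_k,1)=0$ for all $k$, so $(c_j)$ is a nonzero left null vector and $\hat{A}+\hat{C}$ is singular. For the converse I would take a nonzero $\phi\in\mathrm{V}_N$ in the left kernel and integrate by parts twice to expose the formal adjoint $\mathcal{A}^{*}\phi=\partial_{vv}\phi-v\partial_v\phi$. Testing against the interior basis functions of $\mathrm{X}_{N(V_{\min},V_R)}$ and $\mathrm{X}_{N(V_R,V_F)}$, whose double zeros at the endpoints annihilate every boundary term, shows that $\mathcal{A}^{*}\phi$ is $L^2$-orthogonal to these spaces on each of $I_L$ and $I_R$; since the only polynomial solutions of $\partial_{vv}\phi-v\partial_v\phi=0$ are constants (writing $\omega=\partial_v\phi$ reduces it to $\omega'=v\omega$, which has no nonzero polynomial solution by a degree count), $\phi$ must be constant on each subinterval, and pairing against $g_1,g_2,g_3$ forces the two constants to agree. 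Hence $\phi$ is a nonzero constant and $\mathrm{V}_N\supseteq\mathrm{span}\{1\}$.

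Part (2) then follows by specialization: under the zero-eigenvalue hypothesis, part (1) gives $1\in\mathrm{V}_N$, so $\phi=1$ is an admissible test function in \eqref{eq:variational_form2}; substituting it into $(\mathcal{L}p_N,\phi)=0$ and using the vanishing of the two spatial integrals from the boundary identity above leaves exactly $\int_{V_{\min}}^{V_F}\partial_t p_N\,dv=0$. I expect the converse of part (1) to be the main obstacle: ruling out nonconstant $\phi$ relies both on the injectivity of $\mathcal{A}$ on $\mathrm{W}_N$ (so that $\dim\mathcal{R}=2N+3$) and, more delicately, on the polynomial degree of $\mathrm{V}_N$ being low enough that orthogonality of $\mathcal{A}^{*}\phi$ to the weighted spaces $\mathrm{X}_{N(\cdot)}$ forces $\mathcal{A}^{*}\phi\equiv 0$ rather than merely a finite set of moment conditions. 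The piecewise structure at $V_R$ and the exact degree bookkeeping of the chosen $\mathrm{V}_N$ are where the argument must be handled with care.
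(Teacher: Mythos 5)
Your overall architecture coincides with the paper's: you reduce the zero eigenvalue of $\hat{K}$ to singularity of $\hat{A}+\hat{C}$ (legitimate since $\hat{S}$ is invertible), prove the boundary identity $\int_{V_{\min}}^{V_F}\bigl(\partial_v(v\psi)+\partial_{vv}\psi\bigr)dv=0$ for $\psi\in\mathrm{W}_N$ from the conditions \eqref{boudary_condition}/\eqref{eq:lambda}, deduce the forward implication of (1) from a zero row (left null vector), and obtain (2) by inserting $\phi=1$ into \eqref{eq:variational_form2}. These parts are correct and are essentially the paper's argument; your handling of the jump relation in the boundary identity is accurate.

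The genuine gap is the converse of (1), and you name it yourself without closing it. Passing to the formal adjoint $\mathcal{A}^{*}\phi=\partial_{vv}\phi-v\partial_v\phi$ and testing against the bases of $\mathrm{X}_{N(V_{\min},V_R)}$ and $\mathrm{X}_{N(V_R,V_F)}$ only yields finitely many moment conditions (orthogonality to an $N$-dimensional polynomial subspace on each subinterval), which does not by itself force $\mathcal{A}^{*}\phi\equiv 0$, hence does not force $\phi$ to be constant on each subinterval; the subsequent step pairing against $g_1,g_2,g_3$ to match the two constants is fine, but it rests on that unproved piecewise-constancy. Since you explicitly defer the "degree bookkeeping" needed to turn the moment conditions into $\mathcal{A}^{*}\phi\equiv 0$, the converse direction is not a proof as written. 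For comparison, the paper takes a slightly shorter route at this point: after identifying a test function $\phi$ whose row of $\hat{A}+\hat{C}$ vanishes (via row operations, i.e. a left null vector, as you do), it integrates by parts once to get $\int_{V_{\min}}^{V_F}(v\varphi_i+\partial_v\varphi_i)\partial_v\phi\,dv=0$ for all $\varphi_i\in\mathrm{W}_N$ and concludes $\partial_v\phi=0$ directly from the "arbitrariness" of $\varphi_i$ — i.e. the paper asserts, for the finite-dimensional family $\{v\varphi_i+\partial_v\varphi_i\}$, exactly the kind of nondegeneracy you flag as delicate, without carrying out the bookkeeping either. So your diagnosis of where the difficulty sits is sound, but to count as a complete proof you would need to supply that missing argument (e.g. a dimension/degree count showing orthogonality of $\partial_v\phi$ or $\mathcal{A}^{*}\phi$ to the relevant finite-dimensional image forces it to vanish for the admissible choices of $\mathrm{V}_N$), which neither your write-up nor a verbatim reading of the paper currently does.
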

\begin{proof}
\textbf{Proof of (1)}. 
If the test function space $\mathrm{V}_N$ contains constants, without loss of generality, let $\phi_j=1$. Using the definition in \eqref{eq:Matrix2}, $\forall \varphi_i \in \mathrm{W}_N$, we can derive that
\begin{equation}
    \label{eq:integral}
    \begin{aligned}
        &\int_{V_{\min}}^{V_F}\partial_v(v\varphi_i) dv=v\varphi_i |_{V_{\min}}^{V_R^-}+v\varphi_i |_{V_R^+}^{V_F}=0,\\
        &\int_{V_{\min}}^{V_F}\partial_{vv}\varphi_i dv=\partial_v\varphi_i |_{V_{\min}}^{V_R^-}+\partial_v\varphi_i |_{V_R^+}^{V_F}=0.
    \end{aligned}
\end{equation}
So the elements of the $j$th row of the matrices $A$ and $C$ are all zeros. Since $\hat{S}$ is invertible, $S^{-1}$ is a full-rank matrix. Then
\begin{equation}
    \text{rank}(\hat{K})=\text{rank}(\hat{S}^{-1}(\hat{A}+\hat{C}))=\text{rank}(\hat{A}+\hat{C})<2N+3.
\end{equation}
So $\hat{K}$ has a zero eigenvalue.

If matrix $\hat{K}$ has a zero eigenvalue, then matrix $(\hat{A}+\hat{C})$ has zero eigenvalue for $\hat{S}$ is invertible. Therefore, the matrix $(\hat{A}+\hat{C})$ can make the elements in the jth row all zero through the matrix transformation. Notice that, performing matrix row transformation on matrix $(\hat{A}+\hat{C})$ corresponds to replacing the test function in linear system \eqref{eq:linear_system2}  with the linear combination of the original test function. Without loss of generality, we assume that the elements in the jth row of matrix $(\hat{A}+\hat{C})$ are all zeros and the corresponding test function is $\phi$. Using the definition in \eqref{eq:Matrix2}, $\forall \varphi_i \in \mathrm{W}_N$,  it holds that
\begin{equation}
    \begin{aligned}
        \int_{V_{\min}}^{V_F}\partial_v(v\varphi_i) +\partial_{vv}\varphi_i dv=-\int_{V_{\min}}^{V_F} (v\varphi_i+\partial_v\varphi_i)\partial_v\phi dv=0.
    \end{aligned}
\end{equation}
Since the above formula holds for all $\varphi_i \in \mathrm{W}_N$, so $\partial_v\phi=0$, that is $\phi= \text{constant}$.

\textbf{Proof of (2)}.
From (1), we know that when the matrix $\hat{K}$ has a zero eigenvalue,  the constant function $C_1 \in V_N$. Substituting $\phi_j=1$ into \eqref{eq:variational_form2} with $a=1,b=0$,
\begin{equation}
    \int_{V_{\min}}^{V_F} \partial_t p_N dv-\int_{V_{\min}}^{V_F}(\partial(vp_N)+\partial_{vv}p_N) dv\overset{\eqref{eq:integral}}{=}\partial_t \int_{V_{\min}}^{V_F} p_N dv=0.
\end{equation}
So the total mass does not change over time.
\end{proof}

\begin{proposition}[Stability]\label{prop2}
    Consider the Fokker-Planck equation \eqref{eq:problem2} with $a=1, b=0$ and the semi-discrete scheme \eqref{eq:linear_system3} . A necessary condition for the stability of the method is that all the eigenvalues of the matrix $\hat{K}$ are non-positive.
\end{proposition}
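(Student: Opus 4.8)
The plan is to argue by contradiction, exploiting the explicit solution of the linear autonomous system \eqref{eq:linear_system3}. Since $\hat{K}$ is a constant square matrix, the solution of \eqref{eq:linear_system3} with initial data $\hat{\mathbf{P}}^0$ is $\hat{\mathbf{P}}(t)=e^{\hat{K}t}\hat{\mathbf{P}}^0$, so the qualitative long-time behaviour is governed entirely by the spectrum of $\hat{K}$. First I would pin down the notion of stability to be used: the numerical solution $p_N(\cdot,t)$ should stay bounded in $L^2(V_{\min},V_F)$ uniformly in $t$ for every admissible initial datum, in agreement with the fact that the exact solution of the linear Fokker--Planck problem remains bounded (indeed it relaxes to a steady state). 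Because $\{\psi_k\}_{k=1}^{2N+3}$ is a basis of the finite-dimensional space $\mathrm{W}_N$, the coefficient-to-function map $\hat{\mathbf{P}}\mapsto p_N$ is a linear isomorphism; hence on this finite-dimensional space $\|p_N(\cdot,t)\|_{L^2}$ and the Euclidean norm $\|\hat{\mathbf{P}}(t)\|$ are equivalent, and boundedness of the function is equivalent to boundedness of the coefficient vector.

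Next, suppose for contradiction that $\hat{K}$ has an eigenvalue $\mu$ with $\mathrm{Re}(\mu)>0$, and let $\mathbf{w}\neq 0$ satisfy $\hat{K}\mathbf{w}=\mu\mathbf{w}$. Choosing $\hat{\mathbf{P}}^0=\mathbf{w}$ yields $\hat{\mathbf{P}}(t)=e^{\mu t}\mathbf{w}$, whose norm grows like $e^{\mathrm{Re}(\mu)t}\to\infty$. If $\mu$ is complex, then since $\hat{K}$ is real its conjugate $\bar{\mu}$ is also an eigenvalue with eigenvector $\bar{\mathbf{w}}$, and taking the real initial datum $\mathrm{Re}(\mathbf{w})$ produces the real solution $e^{\mathrm{Re}(\mu)t}\big(\cos(\mathrm{Im}(\mu)t)\,\mathrm{Re}(\mathbf{w})-\sin(\mathrm{Im}(\mu)t)\,\mathrm{Im}(\mathbf{w})\big)$, which is again unbounded because the oscillatory factor is modulated by the envelope $e^{\mathrm{Re}(\mu)t}$. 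By the norm equivalence from the first paragraph, the corresponding $p_N(\cdot,t)$ is then unbounded, contradicting stability. Consequently every eigenvalue $\mu$ of $\hat{K}$ must satisfy $\mathrm{Re}(\mu)\le 0$, which is the asserted necessary condition.

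I expect the only genuine subtleties to be the precise formulation of stability and the bookkeeping for complex eigenvalues; the growth estimate itself is immediate from the matrix exponential, so I would not belabor it. A point worth stating explicitly is why this condition is only \emph{necessary} and not sufficient: a purely imaginary or zero eigenvalue lying in a nontrivial Jordan block would still generate algebraic (polynomial-in-$t$) growth, so non-positivity of the spectrum alone does not guarantee boundedness. This is consistent with the proposition claiming necessity only, and it dovetails with Proposition \ref{prop1}, where the single zero eigenvalue tied to mass conservation is semisimple and hence harmless.
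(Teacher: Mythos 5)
Your argument is correct, and its core mechanism is the same one the paper relies on: a positive (real part of an) eigenvalue of $\hat{K}$ forces exponential growth of the semi-discrete solution, which is incompatible with stability. The difference is one of rigor rather than route: the paper does not actually write out a proof of Proposition \ref{prop2}; it only offers an informal remark that, since the continuous linear problem has a unique steady state to which solutions converge exponentially, the discretized system ``can only have non-positive eigenvalues,'' and positive eigenvalues signal instability. Your write-up supplies exactly what that remark leaves implicit --- a precise notion of stability (uniform-in-time boundedness of $p_N$ in $L^2$), the equivalence of the $L^2$ norm of $p_N$ with the Euclidean norm of the coefficient vector via the finite-dimensional isomorphism $\hat{\mathbf{P}}\mapsto p_N$, the explicit solution $e^{\hat{K}t}\hat{\mathbf{P}}^0$, and the construction of an unbounded real solution from a complex eigenpair. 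Your closing observation about why the condition is only necessary (a zero or purely imaginary eigenvalue in a nontrivial Jordan block still yields polynomial growth) is a genuine clarification that the paper does not make; just note that your parenthetical claim that the zero eigenvalue from Proposition \ref{prop1} is semisimple is asserted rather than proved anywhere --- it is harmless as an aside, but should not be presented as established.
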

Note that a modified stability criterion is proposed here because the traditional stability conclusion cannot be applied due to the complexity of the equation. In the linear case, the equation has a unique steady state \cite{caceres2011analysis}, and the solution of the equation will converge exponentially to the steady state, so the discretized kinetic equation can only have non-positive eigenvalues. When there are positive eigenvalues, it means that the numerical scheme is unstable.

Following the theoretical analysis, we can now discuss the specific test function space. Our goal is to select suitable test function spaces such that the constructed numerical scheme is stable and preserves the original properties of the Fokker-Planck equation \eqref{eq:problem2} to the greatest extent, such as mass conservation. The Galerkin method is widely used in spectral methods \cite{shen2011spectral}. Consequently, Legendre-Galerkin Method is proposed below.
\paragraph{Legendre-Galerkin Method (LGM)}
The test function space is chosen to be the same as the trial function space. Applied to the semi-discrete method \eqref{eq:variational_form2} or its fully discrete version \eqref{eq:variational_form3} as 
\begin{equation}
    \mathrm{V}_N=\mathrm{W}_N,
\end{equation}
where $\mathrm{V}_N$ is test function space, and $\mathrm{W}_N$ is trial function space defined in \eqref{eq:trial_function}, we obtain a Legendre-Galerkin Method (LGM for short) for the model \eqref{eq:problem1}. 

The LGM method is numerically stable but total mass is not well conserved in dynamics. When constructing the trial function space, some low-order polynomials, especially constants, are discarded in order to satisfy the boundary conditions. For the LGM, the test function space does not contain constants, which fails to ensure mass conservation as stated in Proposition \ref{prop1}. Hence, this method can be used for  finite-time simulations, yet it is unsuitable for capturing long-time behavior or multiscale problems.

 According to Proposition \ref{prop1}, to improve the mass conservation property of the LGM, it seems that we may replace one of the basis functions in  the test function space with the constant function $1$. Say, we may consider the modified test function space
\begin{equation} \label{tildeV}
    \tilde{\mathrm{V}}_N=\mathrm{W}_N-\{\psi_k\} +\{1\},
\end{equation}
where $\{\psi_k\}(k=1,...,2N+3)$ is the basis function of the $\mathrm{W}_N$ space.

In this case, the mass of the numerical solution appears invariant. However, as shown in Figure \ref{fig:stable}, the matrix $\hat{K}$ in \eqref{eq:linear_system3} has positive eigenvalues for some $N$, which makes the method unstable, which agrees with Proposition \ref{prop2}. In fact, Figure \ref{fig:stable} shows that the maximum eigenvalue of the matrix $\hat{K}$ is significantly positive large  for when $N$ is odd and when the modified test function space $\tilde{\mathrm{V}}_N$ is used. Hence, we need to resort to other strategies for enhancing mass conservation. 
\begin{figure}[!htb]
    \centering
        \begin{minipage}[c]{0.49\textwidth}
            \centering
            \includegraphics[width=1\textwidth]{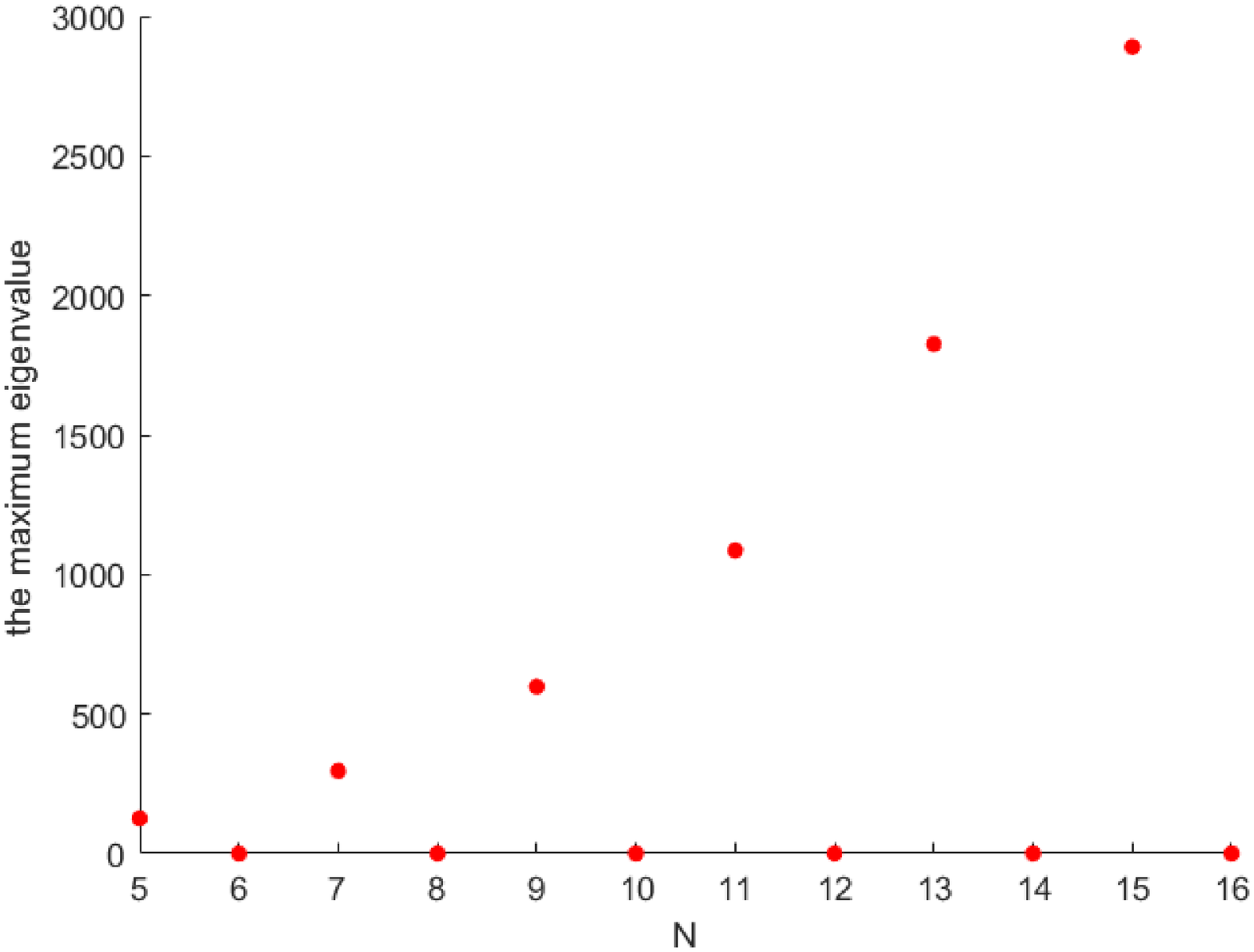}
        \end{minipage}
        \begin{minipage}[c]{0.49\textwidth}
            \centering
            \includegraphics[width=1\textwidth]{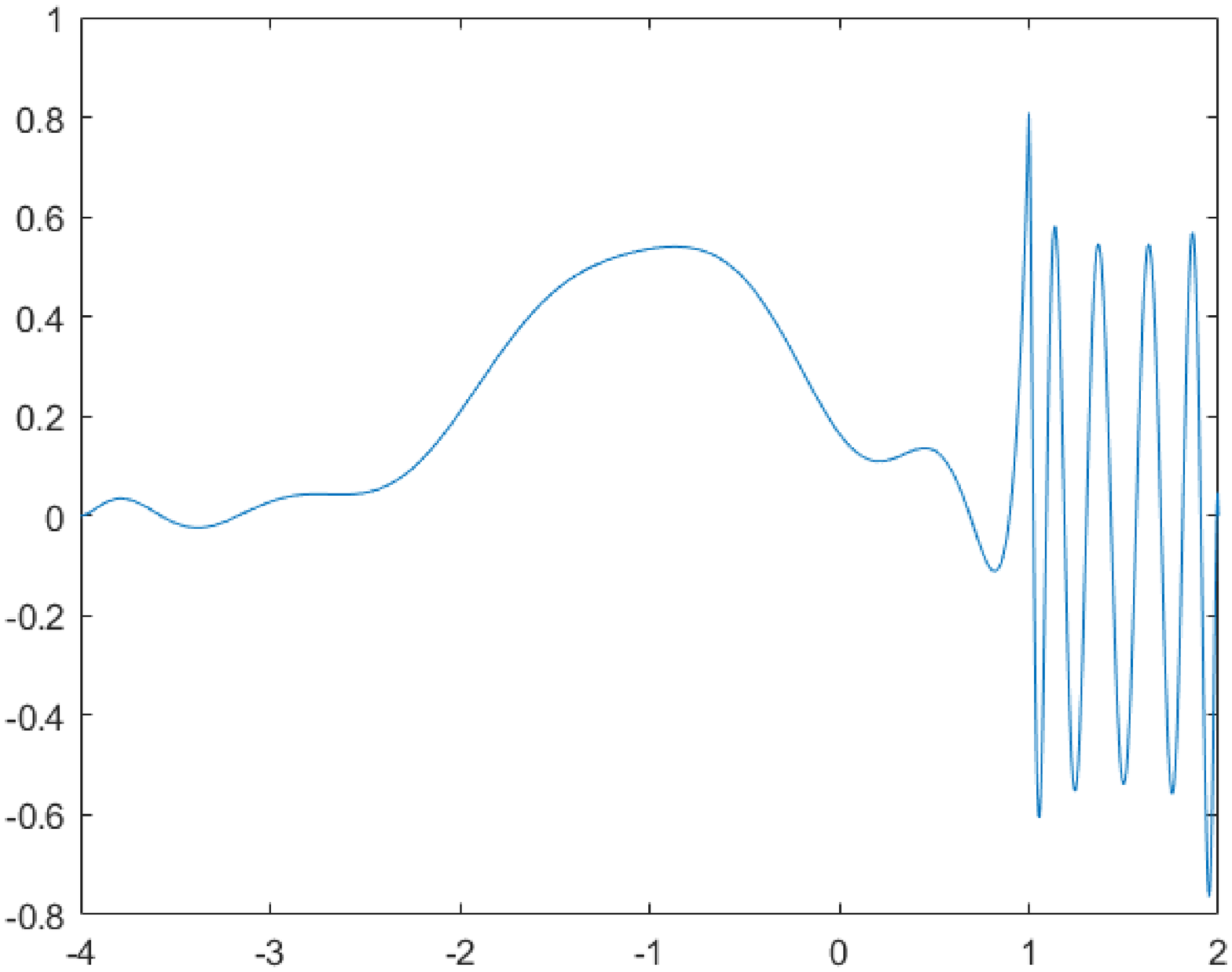}
        \end{minipage}
        \caption{When the test function space is $\tilde{\mathrm{V}}_N$ \eqref{tildeV}, the numerical method might be unstable. Left: The maximum eigenvalue of matrix $\hat{K}$ at different $N$. Right: A typical unstable solution. Equation parameters $a=1, b=0$ with Gaussian initial condition $v_0=-1, \sigma_0^2=0.5$ and $N=11,\Delta t=0.001$.}
        \label{fig:stable}
\end{figure}

\paragraph{Modified Petrov-Galerkin Method (MPGM)} We propose an alternative formulation of the test function space by extending the test function space with one additional basis function $1$. As a consequence, the dimension of the test function space is larger than that of the trial function space, which results in an overdetermined system, and we solve such a system using the Least-Squares method. 

More precisely,  for the semi-discrete method \eqref{eq:variational_form2} or its fully discrete version \eqref{eq:approximate_solution3}, constants are added to  form an augmented test function space
\begin{equation}
    \mathrm{V}_N=\mathrm{W}_N +\{1\}.
\end{equation}
where $\mathrm{W}_N$ is trial function space defined in \eqref{eq:trial_function}, and we thus obtain the modified Petrov-Galerkin Method (MPGM for short).

Note that, the dimension of the test function space is higher than that of the trial function space by $1$. Multiplying \eqref{eq:linear_system2} from the left by $\hat{S}^T$ , the least square solution satisfies
\begin{equation}
    \hat{S}^T\hat{S}\partial_t \hat{\mathbf{P}}=\hat{S}^T(\hat{A}+\hat{C})\hat{\mathbf{P}}.
\end{equation}
The matrix $\hat{K}$ in \eqref{eq:linear_system3} can be written as
\begin{equation}
    \hat{K}=(\hat{S}^T\hat{S})^{-1}(\hat{S}^T(\hat{A}+\hat{C})).
\end{equation}
The numerical solution is not completely mass-conserving due to the use of the least-squares method. But compared with the LGM, the mass of the numerical solution of the MPGM changes very little over time, as shown in Figure \ref{mass}. With extensive tests, the matrix $\hat{K}$ has no positive eigenvalues, and the MPGM is numerically stable.
\begin{figure}[!htb]
    \centering
        \begin{minipage}[c]{0.49\textwidth}
            \centering
            \includegraphics[width=7cm]{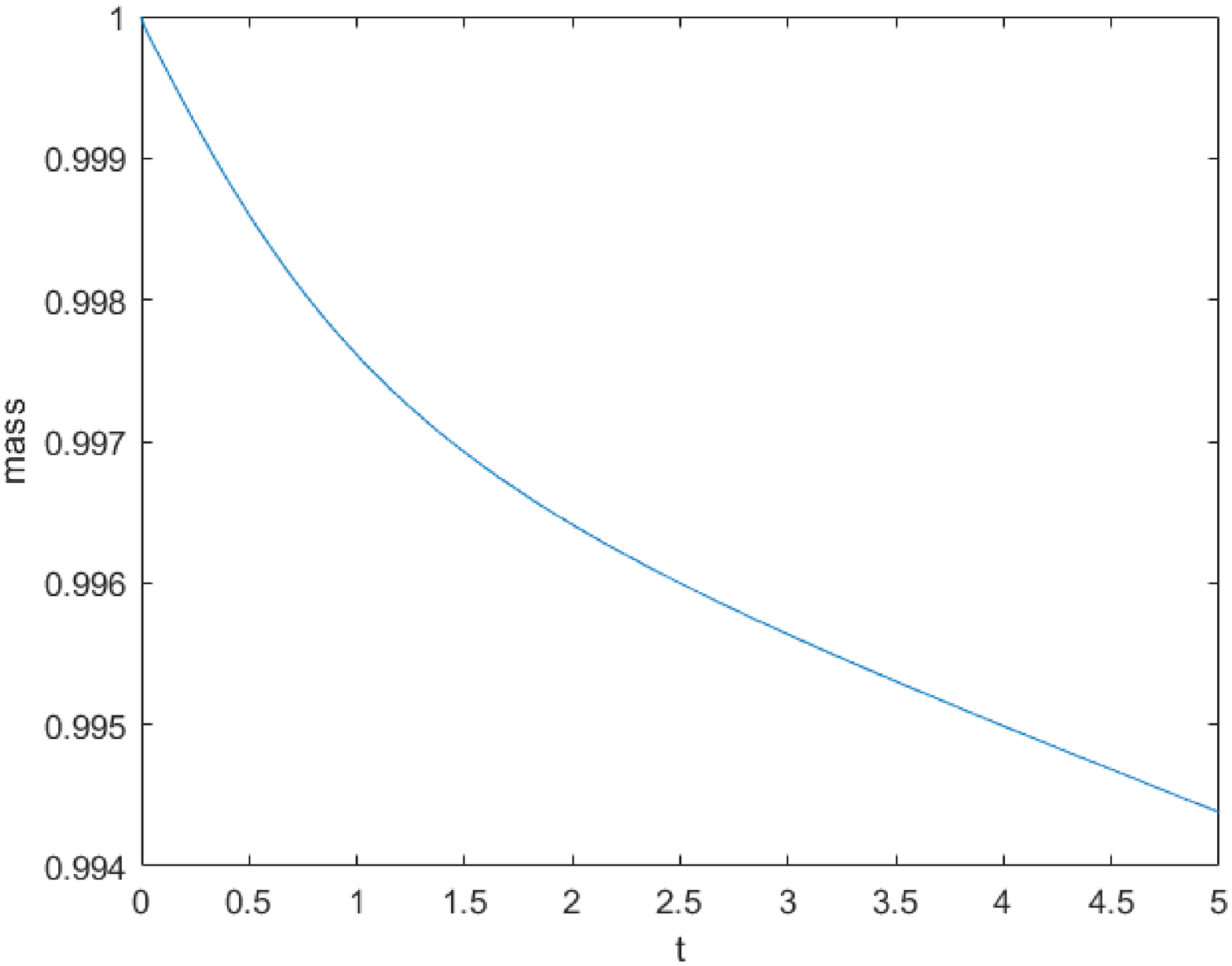}
        \end{minipage}
        \begin{minipage}[c]{0.49\textwidth}
            \centering
            \includegraphics[width=7cm]{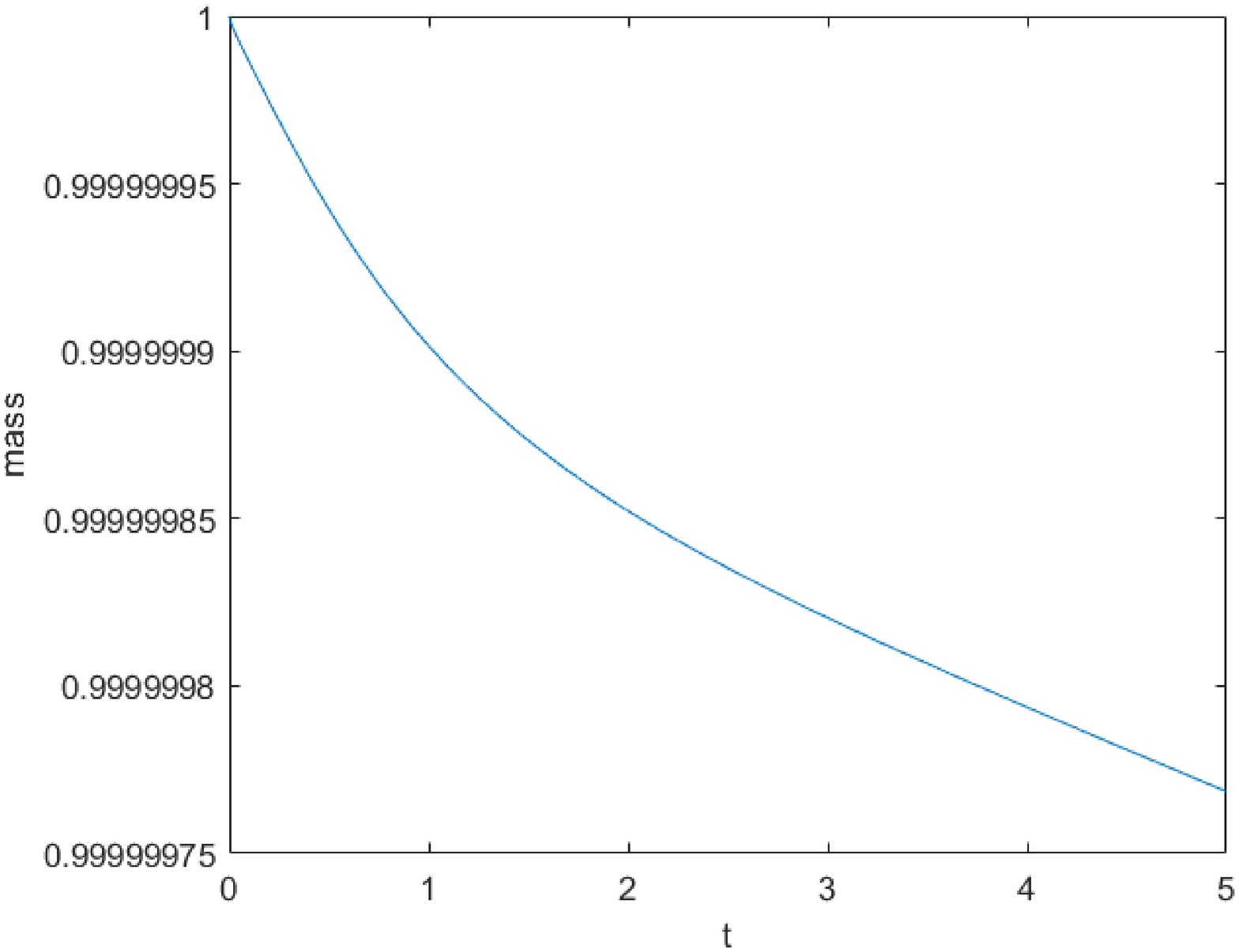}
        \end{minipage}
        \caption{Equation parameters $a=1, b=0$ with Gaussian initial condition $v_0=-1, \sigma_0^2=0.5$ and $N=10,\Delta t=0.001,T_\text{max}=5$. Left: Variation of total mass with time by the LGM. Right: Variation of total mass with time by the MPGM.}
        \label{mass}
\end{figure}

In conclusion, we have proposed two methods, i.e. the LGM and the MPGM, for the model problem \eqref{eq:problem1},  each possessing different advantages and therefore should be used in a flexible manner. The MPGM is preferred for simulating long-time behavior and testing the asymptotic preserving properties of the model, as the mass of the numerical solution from the LGM is significantly diminished in a long time. On the other hand, the LGM can be utilized for $\mathcal O(1)$ time simulations with verifiable order of convergence, and it does not involve the error due to the least square approximation.

\section{NNLIF with learning rules}\label{sec:lr}
In this Section, we consider the NNLIF model with a learning rule which is an extension of the Fokker-Planck equation \eqref{eq:problem1}, involving synaptic weights and the Hebbian learning rule. This is a novel and intriguing model and the dynamics of the membrane potential $v$ and the synaptic weight $w$ are on different time scales, making numerical simulation far more challenging. In order to better understand this model and verify the generality of the method proposed in Section \ref{sec:scheme}, we further explore this model from a numerical perspective. 

\subsection{Model introduction}

Compared with the simplest form of NNLIF model, the NNLIF model with learning rules introduces a new variable, the synaptic weight $w$, which is also the connectivity of the network $b$ mentioned in \eqref{eq:ha}. Furthermore, an external input function $I(w,t)$ is added to the drift coefficient $h$
\begin{equation}
    h(w,N(t))=-v+I(w,t)+w\sigma(N(t)).
\end{equation}
The function $\sigma(\cdot)$ represents the response of the network to the total activity, usually taking $\sigma(N)=N$. Then the Fokker-Planck equation without learning rules can be written as 
\begin{equation}
    \label{eq:problem3}
    \begin{cases}
        \partial_{t}p+\partial_{v}((-v+I(w,t)+w\sigma(\bar{N}(t)))p)-a\partial_{v v}p=0,\qquad v\in(-\infty,V_F]/\{V_R\},\\
        p(v,w,0)=p^0(v,w),\qquad p(-\infty,w,t)=p(V_F,w,t)=0,\\
        p(V^-_R,w,t)=p(V^+_R,w,t),\quad \partial _vp(V^-_R,w,t)=\partial _vp(V^+_R,w,t)+\frac{N(w,t)}{a},\\
    \end{cases}
\end{equation}
where, $p(v,w,t)$ describes the probability of finding a neuron at voltage $v$, synaptic weight $w$ and given time $t$. The diffusion coefficient $a$ is the same as in \eqref{eq:ha}. The subnetwork activity $N(w, t)$ and total activity
$\bar{N}(t)$ are defined as
\begin{equation}
    N(w,t)=-a\frac{\partial p}{\partial v}(V_F,w,t)\geq 0,\quad \bar{N}(t)=\int_{-\infty}^{+\infty}N(w,t)dw.
\end{equation}
Then we define the probability density of finding a neuron at synaptic weight $w$ and given time $t$ by 
\begin{equation}
    H(w, t)=\int_{-\infty}^{V_{F}} p(v, w, t) d v, \quad \int_{-\infty}^{\infty} H(w, t) d w=1 .
\end{equation}

In this case of no learning rule, the function $H(w,t)$ is time-independent because the distribution of synaptic weights in \eqref{eq:problem3} is fixed. The input signal $I(w,t)$ can be reflected by an output signal related to network activity $N(w,t)$. Next, we employ the learning rules of \cite{perthame2017distributed} to modulate the distribution of synaptic weights $H$, enabling the network to discriminate specific input signals $I$ by choosing an apposite synaptic weight distribution $H$ that is adapted to the signal $I$.

In \cite{perthame2017distributed}, the authors choose learning rules inspired by the seminal Hebbian rule and assume synaptic weights described with a single parameter $w$ and the subnetworks interact only via the total rate $\bar{N}$. They elucidate that all subnetworks parameterized by $w$ can vary their intrinsic synaptic weights $w$ according to a function $\Phi$ that is based on the intrinsic activity $N(w)$ of the network and the total activity of the network $\bar{N}$. Then, they give the generalization choice of Hebbian rule
\begin{equation}
    \Phi(N(w), \bar{N})=\bar{N} N(w) K(w),
\end{equation}
where $K(\cdot)$ represents the learning strength of the subnetwork with synaptic weight $w$. Adding the above choice of learning rule, the Fokker-Planck equation with learning rules is given by
\begin{equation}
    \label{eq:problem40}
    \frac{\partial p}{\partial t}+\frac{\partial}{\partial v}[(-v+I(w,t)+w \sigma(\bar{N}(t))) p]+\varepsilon \frac{\partial}{\partial w}[(\Phi-w) p]-a \frac{\partial^{2} p}{\partial v^{2}}=N(w, t) \delta\left(v-V_{R}\right).
\end{equation}
In order to better apply the numerical scheme and study the learning behavior of the model, we consider the equation \eqref{eq:problem40} for time rescaling $t \rightarrow t / \varepsilon$ and convert $\delta$-function to dynamic boundary condition such as:
\begin{equation}
    \label{eq:problem4}
    \begin{cases}
        \displaystyle
        \frac{\partial p}{\partial t}+\frac{\partial}{\partial w}[(\bar{N}(t)N(w,t)K(w)-w)p]
        =\frac{1}{\varepsilon}\left\{a\frac{\partial^2p}{\partial v^2}-\frac{\partial}{\partial v}[(-v+I(w,t)+w\sigma(\bar{N}(t)))p]\right\},\\
        p(v,w,0)=p^0(v,w),p(V_F,w,t)=p(-\infty,w,t)=p(v,\pm \infty,t)=0,\\
        p(V_R^-,w,t)=p(V_R^+,w,t),\qquad \frac{\partial}{\partial v}p(V^-_R,w,t)=\frac{\partial}{\partial v}p(V^+_R,w,t)+\frac{N(w,t)}{a}.
    \end{cases}
\end{equation}

Here, $p^0(v,w)$ is initial condition and the probability density
function p(v, t) should satisfy the condition of conservation of mass
\begin{equation}
    \int_{-\infty}^{\infty} \int_{-\infty}^{V_{F}} p(v, w, t) d v d w=\int_{-\infty}^{\infty} \int_{-\infty}^{V_{F}} p^{0}(v, w) d v d w=1.
\end{equation}

Despite some research on model \eqref{eq:problem4} as indicated by the theoretical properties presented in \cite{perthame2017distributed} and the numerical analysis and experiments in \cite{he2022structure}, it is still a relatively new model with limited established knowledge. In this paper, the numerical method proposed in Section \ref{sec:scheme} is used to further investigate the learning behaviors of this model numerically.

\subsection{Numerical scheme}
Now, we describe the numerical scheme for \eqref{eq:problem4}. We choose the calculation interval as $[V_{\text{min}},V_F]\times [W_{\text{min}},W_{\text{max}}]\times [0,T_{\text{max}}]$ and suppose the density function is practically negligible out of this region.
We use spectral methods for v-wise discretization and Differential method for w-wise and t-wise discretization. So we divide the interval $[W_{\text{min}},W_{\text{max}}],[0,T_{\text{max}}]$ into $n_w,n_t$ equal sub-intervals with size
\begin{equation}
    \Delta w=\frac{W_{\text{max}}-W_{\text{min}}}{n_w},\Delta t=\frac{T_{\text{max}}}{n_t}.
\end{equation}
Then the grid points can be represented as follows
\begin{equation}
    \begin{aligned}
        &w_{j}=W_{\text{min} }+j \Delta w, & j=0,1,2, \cdots, n_{w} \\
        &t^{n}=n \Delta t, & n=0,1,2, \cdots, n_{t}
    \end{aligned}
\end{equation}
For the v-direction discretization, we take the same scheme as in Section \ref{sec:fully_discrete_scheme}. The approximate solution is expended as 
\begin{equation}
    \label{eq:approximate_solution4}
    p_N(v,w,t)=\sum_{k=1}^{2N+3}\hat{u}_k(w,t)\psi_k(v).
\end{equation}
The initial condition for the expansion coefficients $\{\hat{u}_{k}(w,0)\}_{k=0}^{2N+3}$ can be obtained by the least square approximation,
\begin{equation}
    \label{eq:initial_vector3}
    \int_{V_{\min}}^{V_F} \sum_{k=1}^{2N+3}\hat{u}_k(w_j,0)\psi_k(v) \phi_i dv=\int_{V_{\min}}^{V_F} p^0(w_j,v)\phi_i dv, \quad j=0,1,2, \cdots, n_{w} \quad \forall \phi_i \in \mathrm{V}_N.
\end{equation}
From the properties of the basis functions \eqref{eq:lambda}, subnetwork activity $N(w,t)$ can be expressed as
\begin{equation}
    N^n_j=N(w_j,t^n)=-a\hat{u}_{2N+3}(w_j,t^n).
\end{equation}
And we apply the simplest rectangular numerical integration rule to discretize the total activity $\bar{N}(t)$
\begin{equation}
    \bar{N}^n=\Delta w \sum_{j=0}^{n_w}N^n_j.
\end{equation}
For the w-direction discretization, we inherit the idea form \cite{he2022structure} which takes the following explicit flux construction adapted from Godunov's Method 
 \begin{equation}
    \Phi_{i, j+\frac{1}{2}}^{n}=
        \begin{cases}
            \begin{cases}
                \min \left\{\Phi_{i, j}^{n}, \Phi_{i, j+1}^{n}\right\} \qquad &\hat{P}_{i, j}^{n} \leq \hat{P}_{i, j+1}^{n} \\
                \max \left\{\Phi_{i, j}^{n}, \Phi_{i, j+1}^{n}\right\}  &\hat{P}_{i, j}^{n}>\hat{P}_{i, j+1}^{n} \\
            \end{cases}&j=0, \cdots, n_{w}-1\\
            0  &j=-1, n_{w}
        \end{cases}
    \end{equation}
where
\begin{equation}
    \Phi_{i, j}^{n}=\left(\bar{N}^{n} N_{j}^{n} K\left(w_{j}\right)-w_{j}\right) \hat{P}_{i, j}^{n} \quad \text { for } \quad j=0, \cdots, n_{w}.
\end{equation}
$\hat{P}_{i,j}^n$ is the coefficients of the basis functions in \eqref{eq:approximate_solution4}
\begin{equation}
    \hat{P}_{i,j}^n=\hat{u}_i(w_j,t^n).
\end{equation}
Define 
\begin{equation}
\begin{aligned}
    &p_{N,j}^{n}=\sum_{k=1}^{2N+3}\hat{u}_k(w_j,t^n)\psi_k(v),\\
    &q_{N,j+\frac{1}{2}}^n=\sum_{k=1}^{2N+3} \Phi_{k,j+\frac{1}{2}}^{n}\psi_k(v).
\end{aligned}
\end{equation}
After using a semi-implicit method for time discretization, we obtain the fully discrete scheme as follows:
\begin{equation}
    \frac{p_{N,j}^{n+1}-p_{N,j}^{n}}{\Delta t}+\frac{q_{N,j+\frac{1}{2}}^n-q_{N,j-\frac{1}{2}}^n}{\Delta w}=\frac{1}{\varepsilon}\left\{a\frac{\partial^2p_{N,j}^{n+1}}{\partial v^2}-\frac{\partial}{\partial v}\left[(-v+I(w_j)+w_j\sigma(\bar{N}(t^n)))p_{N,j}^{n+1}\right]\right\}.
\end{equation}
When the test function space $\mathrm{V}_N$ is given, the coefficients of the approximate solution \eqref{eq:approximate_solution4} for each $t$ and $w$ step can be obtained by the following linear system
\begin{equation}
    \begin{aligned}
        &\frac{\hat{S}(\hat{\mathbf{P}}^{n+1}_{j}-\hat{\mathbf{P}}^n_{j})}{\Delta t}+\frac{\hat{S}(\mathbf{\Phi}_{j+\frac{1}{2}}^n-\mathbf{\Phi}_{j-\frac{1}{2}}^n)}{\Delta w}\\
        +&\frac{1}{\varepsilon}\left\{-\hat{A}\hat{\mathbf{P}}^{n+1}_{j}+\left(I(w_{j},t^n)+w_{j}\sigma(\bar{N}(t^n))\right)\hat{B}\hat{\mathbf{P}}^{n+1}_{j}-a\hat{C}\hat{\mathbf{P}}^{n+1}_{j}\right\}=0,
    \end{aligned}
\end{equation}
where 
\begin{equation}
    \begin{aligned}
    &\hat{\mathbf{P}}^n_j=\left(\hat{u}_1(w_j,t^n),\hat{u}_2(w_j,t^n),...,\hat{u}_{2N+3}(w_j,t^n)\right)^T,\\
        &\mathbf{\Phi}_{j+\frac{1}{2}}^n=(\Phi_{1, j+\frac{1}{2}}^{n},\Phi_{2, j+\frac{1}{2}}^{n},...,\Phi_{2N+3, j+\frac{1}{2}}^{n})^T,
    \end{aligned}
\end{equation}
 and the matrix $\hat{S},\hat{A},\hat{B},\hat{C}$ are defined in \eqref{eq:Matrix2}.

This numerical scheme is conserved naturally in the $w$ direction, however, strict conservation of mass in the $v$ direction is not achieved when the test function space is selected based on Section \ref{sec:stability}. When $\varepsilon$ is small enough, the asymptotic preserving properties of the model can only be verified through the use of MPGM.

\section{Numerical test} \label{sec:numerical_test}

In this section, we give  numerical tests to verify the properties of the proposed schemes and demonstrate some explorations of the model. Numerical solutions for the initial three subsections are obtained by LGM; results for the MPGM approach are similar except for Section \ref{sec:Convergence}, which are thus omitted, and numerical solutions for Section \ref{sec:Learning_testing} are obtained by MPGM, as variations in the time scale require the scheme to be asymptotic preserving.

The tests are structured as follows. In Section \ref{sec:Convergence}, the convergence order of the method is tested in both the NNLIF model and the NNLIF model with learning rules. In Section \ref{sec:time_saving}, we validate the efficiency of the spectral method by comparing it to existing methods. In Section \ref{sec:blow_up}, we test a few properties of the NNLIF model. In Section \ref{sec:Learning_testing}, we test the learning and discrimination abilities of NNLIF model with learning rules for the periodic input function.

\subsection{Order of accuracy}\label{sec:Convergence}
In this part, we test the order of accuracy of the proposed scheme based on the NNLIF model and the NNLIF model with learning rules. Since the exact solution is unavailable, we choose the numerical solution $p_e$ of the finite difference method \cite{hu2021structure} with sufficient accuracy to replace the exact solution.

For NNLIF model \eqref{eq:problem2}, we choose $V_F=2,V_R=1,V_{\text{min}}=-4,a=1, b=3$ and the Gaussian distribution
\begin{equation}
    p_G(v)=\frac{1}{\sqrt{2 \pi} \sigma_{0} M_0} e^{-\frac{\left(v-v_{0}\right)^{2}}{2 \sigma_{0}^{2}}},
\end{equation}
as the initial condition with $v_0=-1$ and $\sigma_0^2=0.5$, $M_0$ is a normalization factor such that
\begin{equation}
    \int_{V_{\text{min}}}^{V_F} p_G(v) dv=1.
\end{equation}
 The numerical solution is computed till time $t=0.2$. Errors in both $L^{\infty}$ and $L^2$ norm are examined with fixed $N=12$ and different $\Delta t$ in Table \ref{convergence1}. It should be noted that the number of basis functions is not $N$, but rather $2N+3$, as shown in equation \eqref{eq:approximate_solution3}. 
\begin{table}[!htb]
	\centering
	\begin{tabularx}{10cm}{ccccc}
	\toprule
	$\Delta t$ & $\left\| p_N-p_e \right\|_{L^{\infty}}$& {$O_{\tau,L^{\infty}}$}& $\left\| p_N-p_e \right\|_{L^2}$& {$O_{\tau,L^2}$}\\ 
	\midrule
	0.04 & 3.880E-03 &0.9520 & 1.868E-03 &0.9508 \\
	0.02 & 2.005E-03 & 0.9792 & 9.662E-04 &0.9617 \\
	0.01 & 1.017E-03 & 0.9926 &4.961E-06&0.9287 \\
    0.005 & 5.111E-04 & - &2.606E-04& - \\
	\bottomrule
    \end{tabularx}
    \caption{Error and order of accuracy of the proposed numerical scheme for NNLIF model with different temporal sizes. The parameter $N$ is fixed as $N=12$.}
    \label{convergence1}
\end{table}

For the order of accuracy in the $v$ direction, we choose the time step size $\Delta t=10^{-5}$. Errors in the $L^2$ norm are examined with different $N$. The logarithm of the error versus $N$ is plotted in Figure \ref{convergence2}. We remark that when testing the order of spatial convergence, the results present a zig-zag decreasing profile as $N$ increases, which is a common phenomenon for spectral methods. We thus plot the errors for odd and even numbers of $N$, respectively. For each scenario, we clearly observe the spectral convergence as the number of spatial basis functions increases.

\begin{figure}[!htb]
    \centering
        \begin{minipage}[c]{0.49\textwidth}
            \centering
            \includegraphics[width=7cm]{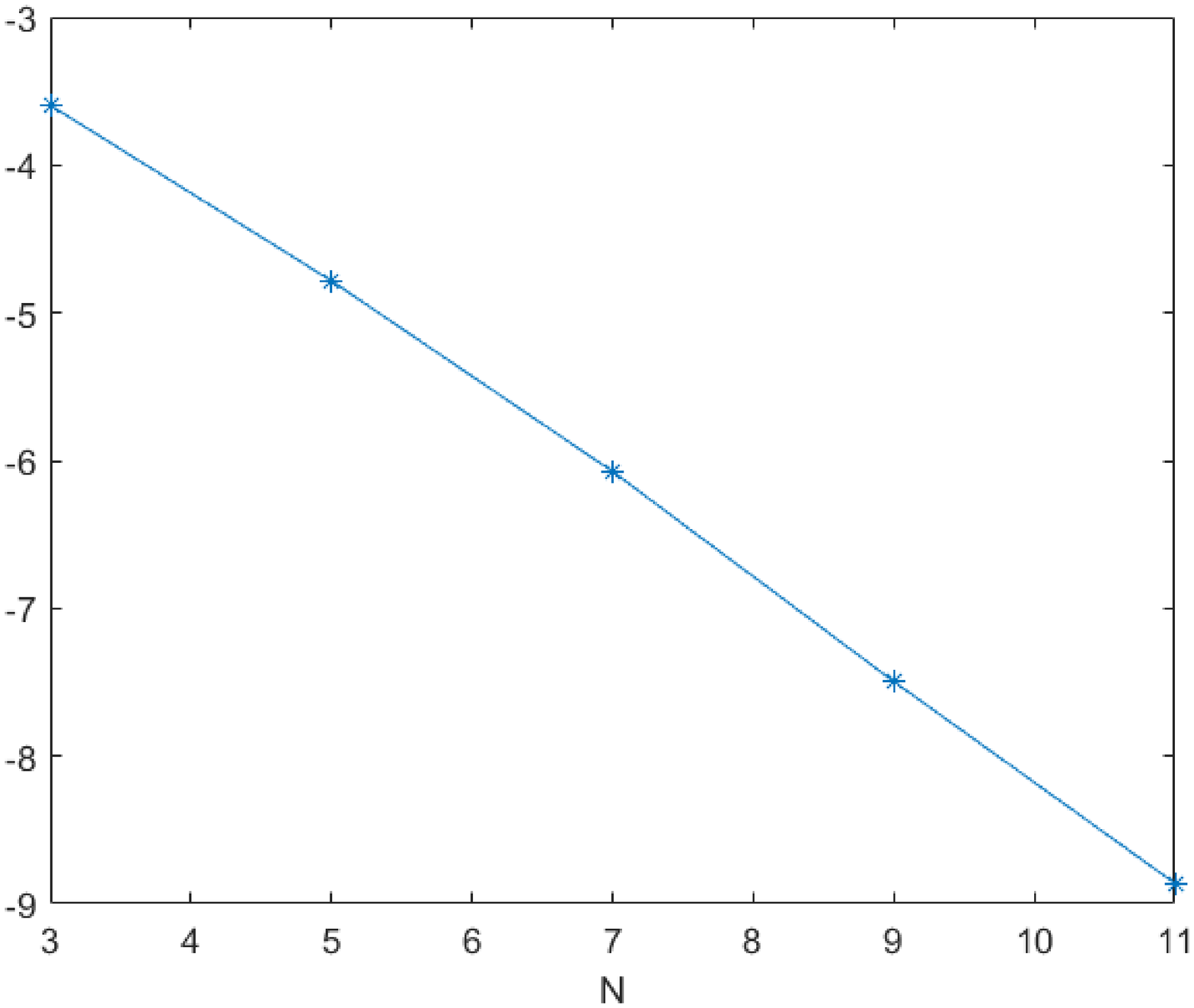}
        \end{minipage}
        \begin{minipage}[c]{0.49\textwidth}
            \centering
            \includegraphics[width=7cm]{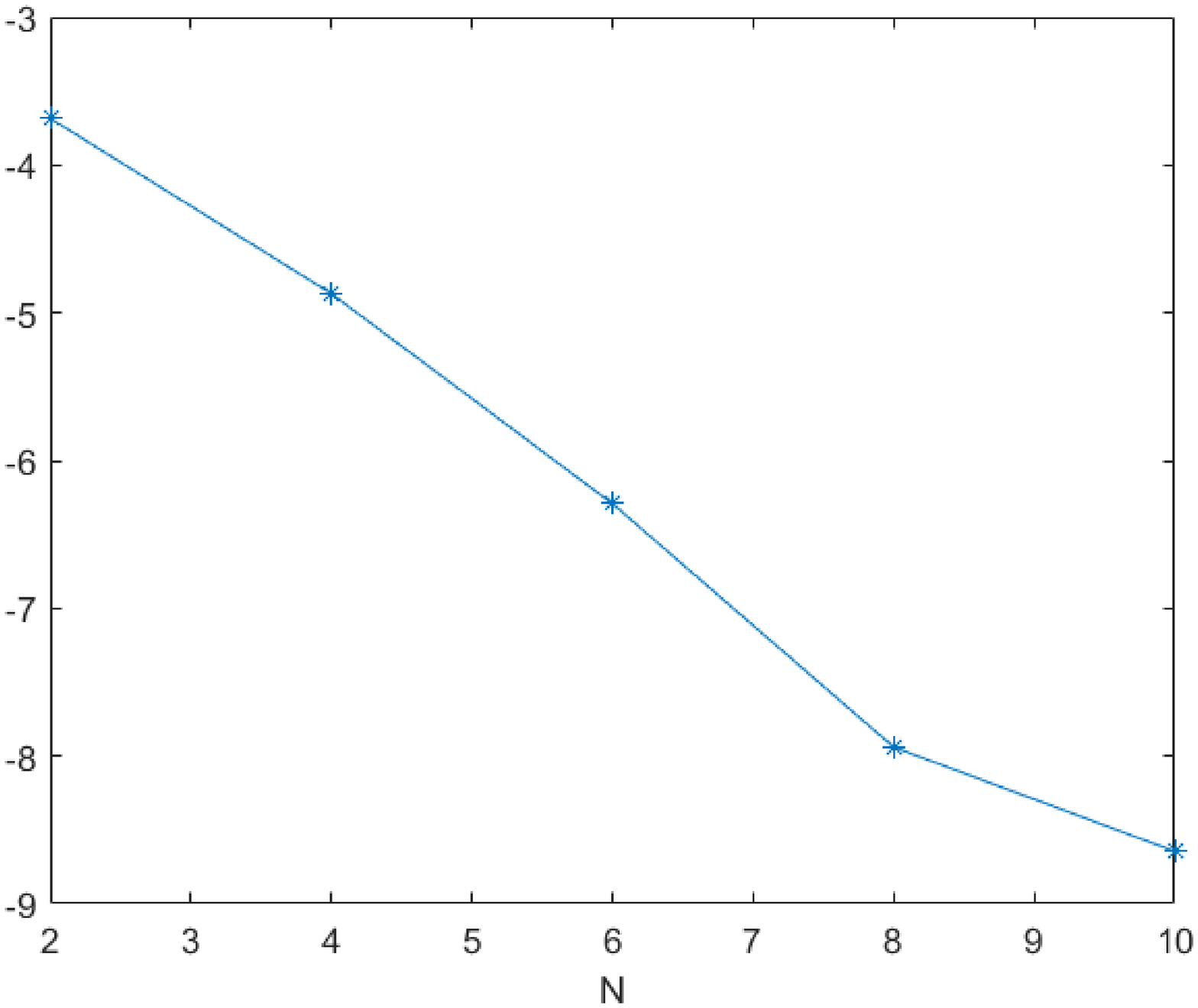}
        \end{minipage}
		\caption{Logarithm of the error of the proposed numerical scheme for NNLIF model with learning rules with different $N$. The temporal size is fixed as $\Delta t=10^{-5}$. Left: $N$ is odd; Right: $N$ is even.}
  \label{convergence2}
\end{figure}

For NNLIF with learning rules model \eqref{eq:problem4}, we choose $V_F=2,V_R=1,V_{\text{min}}=-4,a=1,\varepsilon=0.5,W_{\text{min}}=-1.1,W_{\text{max}}=0.1,\sigma(\bar{N})=\bar{N}, I(w)=0$ and initial condition
\begin{equation}
    p_{\text{init}}=\begin{cases}
        \frac{1}{\sqrt{2 \pi} \sigma_{0} } e^{-\frac{\left(v-v_{0}\right)^{2}}{2 \sigma_{0}^{2}}}\text{sin}^2(\pi w) \qquad &-1<w<0,\\
        0 &\text{otherwise},
    \end{cases}
\end{equation}
with $v_0=-1$ and $\sigma_0^2=0.5$.

The numerical solution is computed till time $t=0.1$. For $t$ direction and $w$ direction, we fix $\frac{\Delta w}{\Delta t}=1, N=16$. Considering that both the $t$ direction and the $w$ direction are theoretically first-order accurate, as well as the stability factor, it is reasonable to jointly test the order of accuracy. Errors in both $L^{1}$ and $L^2$ norm are examined with different $\Delta t$ and $\Delta w$ in Table \ref{convergence3}. For $v$ direction, we fix ${\Delta w}={\Delta t}=10^{-5}$. Errors in the $L^2$ norm are examined with different $N$. The logarithm of the error versus $N$ is plotted in Figure \ref{convergence4}.

\begin{table}[!htb]
	\centering
	\begin{tabularx}{10cm}{cccccc}
	\toprule
	$\Delta t$ &$\Delta w$ & $\left\| p_N-p_e \right\|_{L^{1}}$& $O_{\tau,L^{1}}$& $\left\| p_N-p_e \right\|_{L^2}$& {$O_{\tau,L^2}$}\\ 
	\midrule
	0.02 &0.02 & 1.599E-03 &1.04 & 3.234E-03 &1.07 \\
	0.01 &0.01 & 7.755E-04 & 0.99 & 1.536E-03 &0.97 \\
	0.005 &0.005 &3.893E-04 & 1.01 &7.812E-04&1.05 \\
    0.0025 & 0.0025 & 1.926E-04 & - &3.757E-04& - \\
	\bottomrule
    \end{tabularx}
    \caption{Error and order of accuracy of the proposed numerical scheme for NNLIF model with learning rules with different ${\Delta w}$ and ${\Delta t}$. The parameter $N$ is fixed as $N=16$.}
    \label{convergence3}
\end{table}
\begin{figure}[!htb]
    \centering
        \begin{minipage}[c]{0.49\textwidth}
            \centering
            \includegraphics[width=7cm]{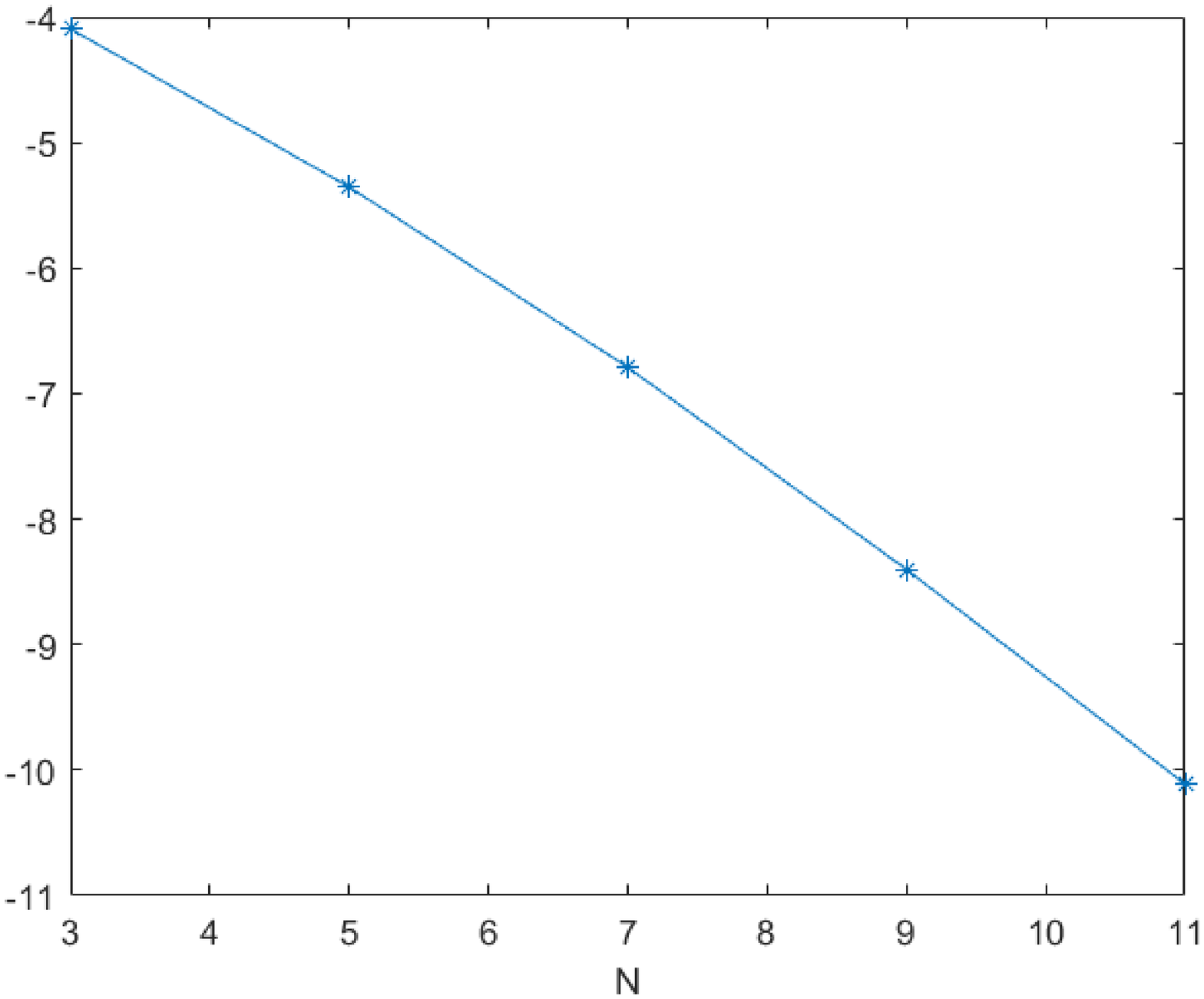}
        \end{minipage}
        \begin{minipage}[c]{0.49\textwidth}
            \centering
            \includegraphics[width=7cm]{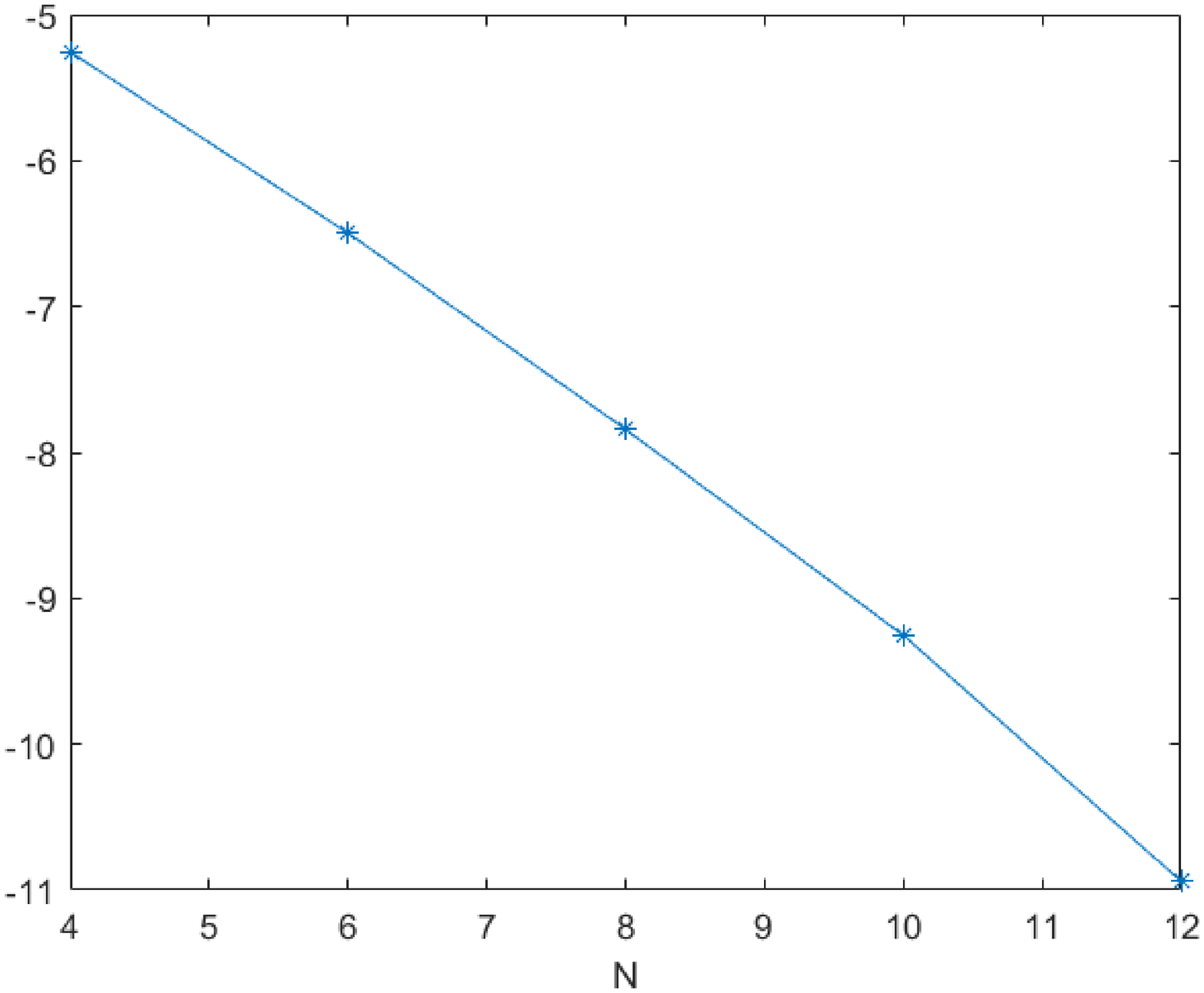}
        \end{minipage}
		\caption{Logarithm of the error of the proposed numerical scheme for NNLIF model with learning rules with different $N$. The temporal size is fixed as $\Delta t=10^{-5}$. Left: $N$ is odd; Right: $N$ is even}
  \label{convergence4}
\end{figure}

The results indicate that the scheme shows first-order accuracy in time and exponential convergence in space for the NNLIF model; first-order accuracy in the $w$, $t$ direction and exponential convergence in the v direction for the NNLIF model with learning rules.

\subsection{Simulation time comparison}\label{sec:time_saving}
In this part, we compare the CPU time between the proposed spectral method and the finite difference method \cite{hu2021structure}, to show that our scheme has a significant computational time advantage with the same level of accuracy.

We choose NNLIF model with parameters $a=1,b=0.5,\Delta t=5\times 10^{-4}$ and the Gaussian initial condition with $v_0=0,\sigma_0^2=0.25$. The numerical solution is computed till time $t=0.5$. The results of the spectral method and the finite difference method are shown in Table \ref{error1} and Table \ref{error2}.
\begin{table}[!htb]
	\centering
	\begin{tabularx}{8cm}{cccc}
	\toprule
	$N$& $\left\| \cdot \right\|_{\infty}$&$\left\| \cdot \right\|_{1}$& CPU Time (s) \\ 
	\midrule
	5 & 5.15e-02 & 1.58e-02& 0.026 \\
    10 & 3.33e-03 & 2.85e-04 & 0.030  \\
    15 & 9.71e-05 & 2.51e-05& 0.053 \\
    20 & 1.30e-06 & 3.76e-07 & 0.071  \\
	\bottomrule
    \end{tabularx}
    \caption{Errors using the spectral method with different numbers of basis functions.}
    \label{error1}
\end{table}

\begin{table}[!htb]
	\centering
	\begin{tabularx}{8cm}{cccc}
	\toprule
	$h$& $\left\| \cdot \right\|_{\infty}$&$\left\| \cdot \right\|_{1}$& CPU Time (s) \\ 
	\midrule
    ${1/4}$ & 3.01e-03 & 7.09e-04& 0.031 \\
    ${1/8}$ & 9.69e-04 & 2.18e-04& 0.073 \\
    ${1/16}$ & 2.79e-04 & 6.18e-05 & 0.157  \\
    ${1/32}$ & 7.54e-05 & 1.64e-05& 0.348 \\
    ${1/64}$ & 1.97e-05 & 4.21e-06 & 2.801  \\
    ${1/128}$ & 4.41e-06 & 1.12e-06 & 11.971  \\
	\bottomrule
    \end{tabularx}
    \caption{Errors using the finite difference method  with different spatial grid sizes}
    \label{error2}
\end{table}
These tables clearly indicate that to achieve the same level of accuracy, the spectral method is more efficient in terms of the simulation time, and the advantage is more noticeable when the accuracy level is higher.

\subsection{Global solution and blow-up in NNLIF model}\label{sec:blow_up}
\subsubsection{Blow up}

In \cite{caceres2011analysis}, the authors find the solution may blow up in finite time with the suitable initial conditions for the excitatory network. They show that whenever the value of $b>0$ is, if the initial data is concentrated enough around $v=V_F$, then the defined weak solution in Definition 2.1 of \cite{caceres2011analysis} does not exist for all times. Figure \ref{fig:blowup1} and Figure \ref{fig:blowup2} show this phenomenon. It can be seen that when the blow-up phenomenon is about to occur, the density function $p(v,t)$ is increasingly concentrated and sharp at reset point $V_R$ and the firing rate $N(t)$ is growing rapidly. 
\begin{figure}[!htb]
    \centering
        \begin{minipage}[c]{0.49\textwidth}
            \centering
            \includegraphics[width=7cm]{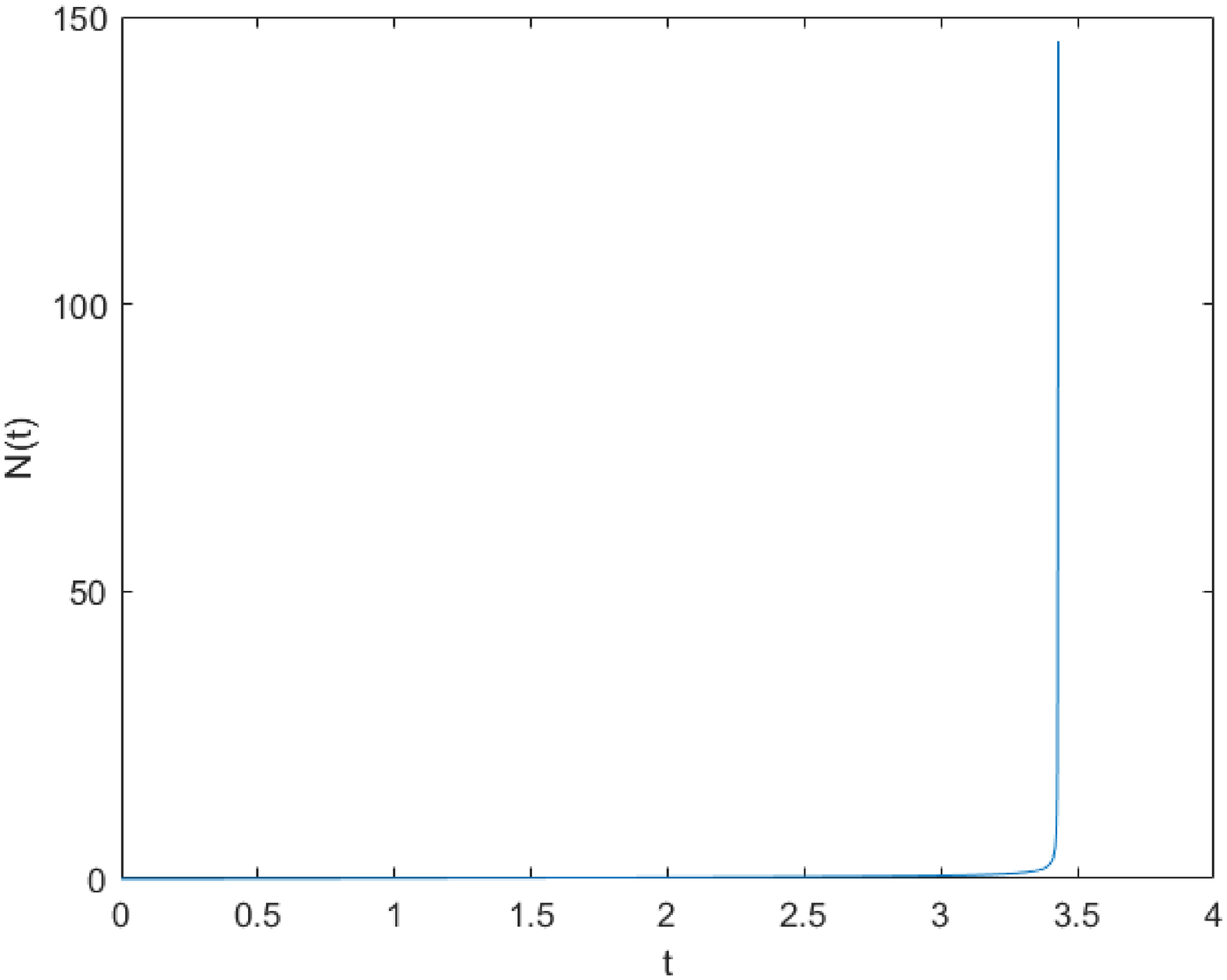}
        \end{minipage}
        \begin{minipage}[c]{0.49\textwidth}
            \centering
            \includegraphics[width=7cm]{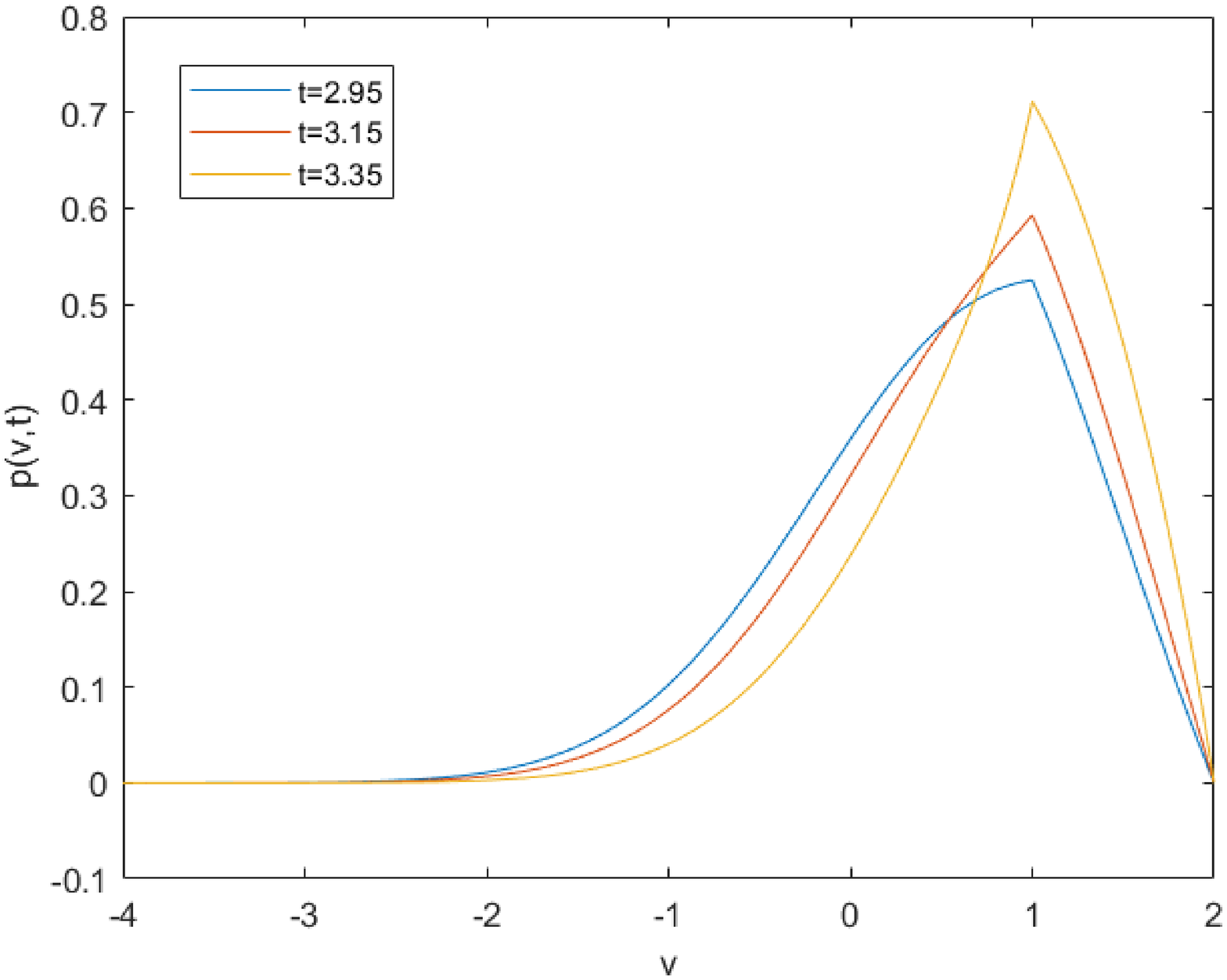}
        \end{minipage}
        \caption{Equation parameters $a=1, b=3$ with Gaussian initial condition $v_0=-1, \sigma_0^2=0.5$. Left: evolution of firing rate $N(t)$. Right: density function $p(v, t)$ at $t=2.95,3.15,3.35$.}
        \label{fig:blowup1}
\end{figure}
\begin{figure}[!htb]
        \begin{minipage}[c]{0.49\textwidth}
            \centering
            \includegraphics[width=7cm]{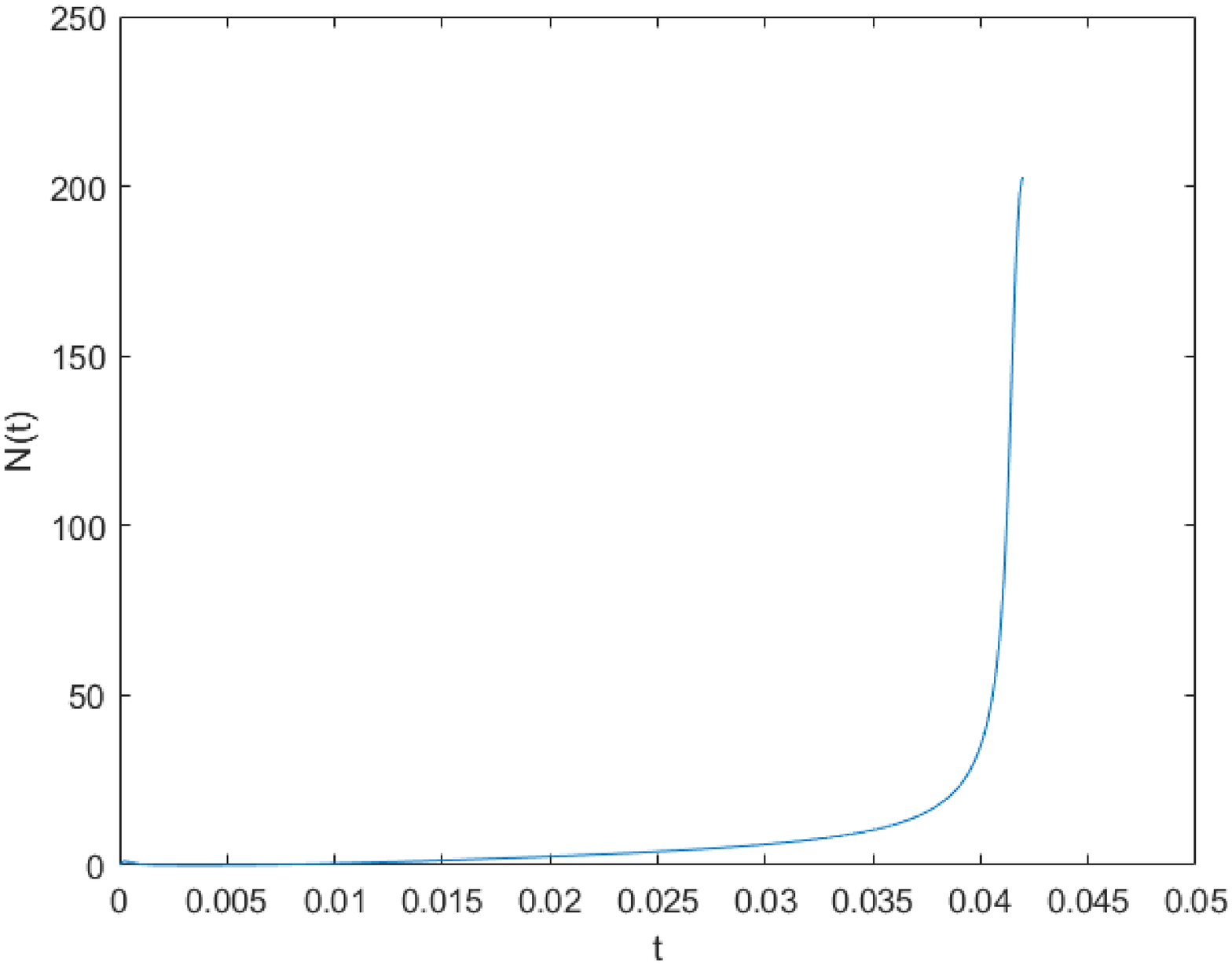}
        \end{minipage}
        \begin{minipage}[c]{0.49\textwidth}
            \centering
            \includegraphics[width=7cm]{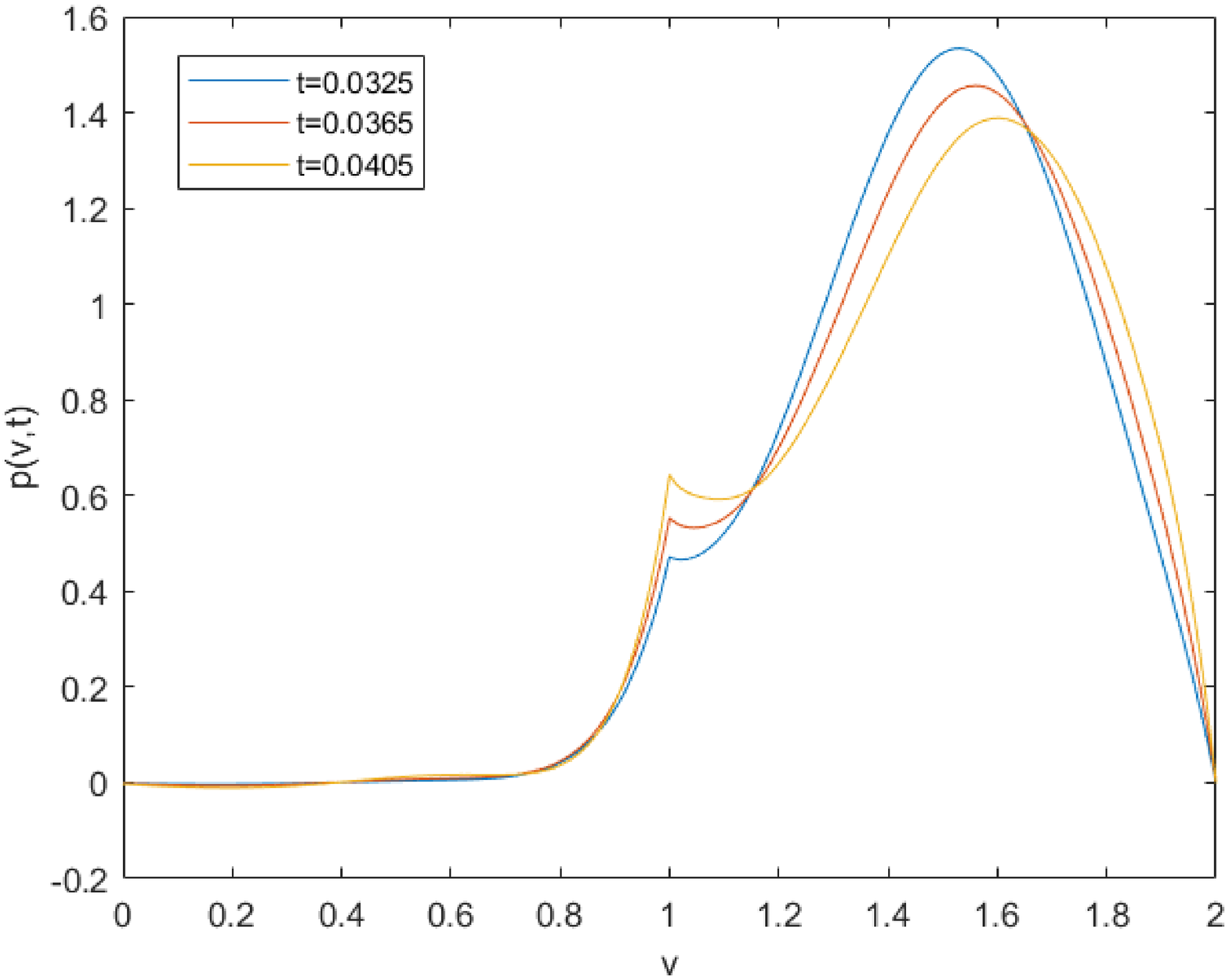}
        \end{minipage}
        \caption{Equation parameters $a=1, b=1.5$ with Gaussian initial condition $v_0=1.5, \sigma_0^2=0.005$.Left: evolution of firing rate $N(t)$. Right: density function $p(v, t)$ at $t=0.0325,0.0365,0.0405$.}
        \label{fig:blowup2}
\end{figure}

For spectral methods, the approximate solution of the density function is dependent on the coefficients of the basis functions. We aim to further investigate how the coefficients change when the blow-up phenomenon is about to occur. We choose $a=1, b=1.5$ in equation and $N=20,\Delta t=10^{-5}$.
\begin{figure}[!htb]
    \centering
        \begin{minipage}[c]{0.49\textwidth}
            \centering
            \includegraphics[width=7cm]{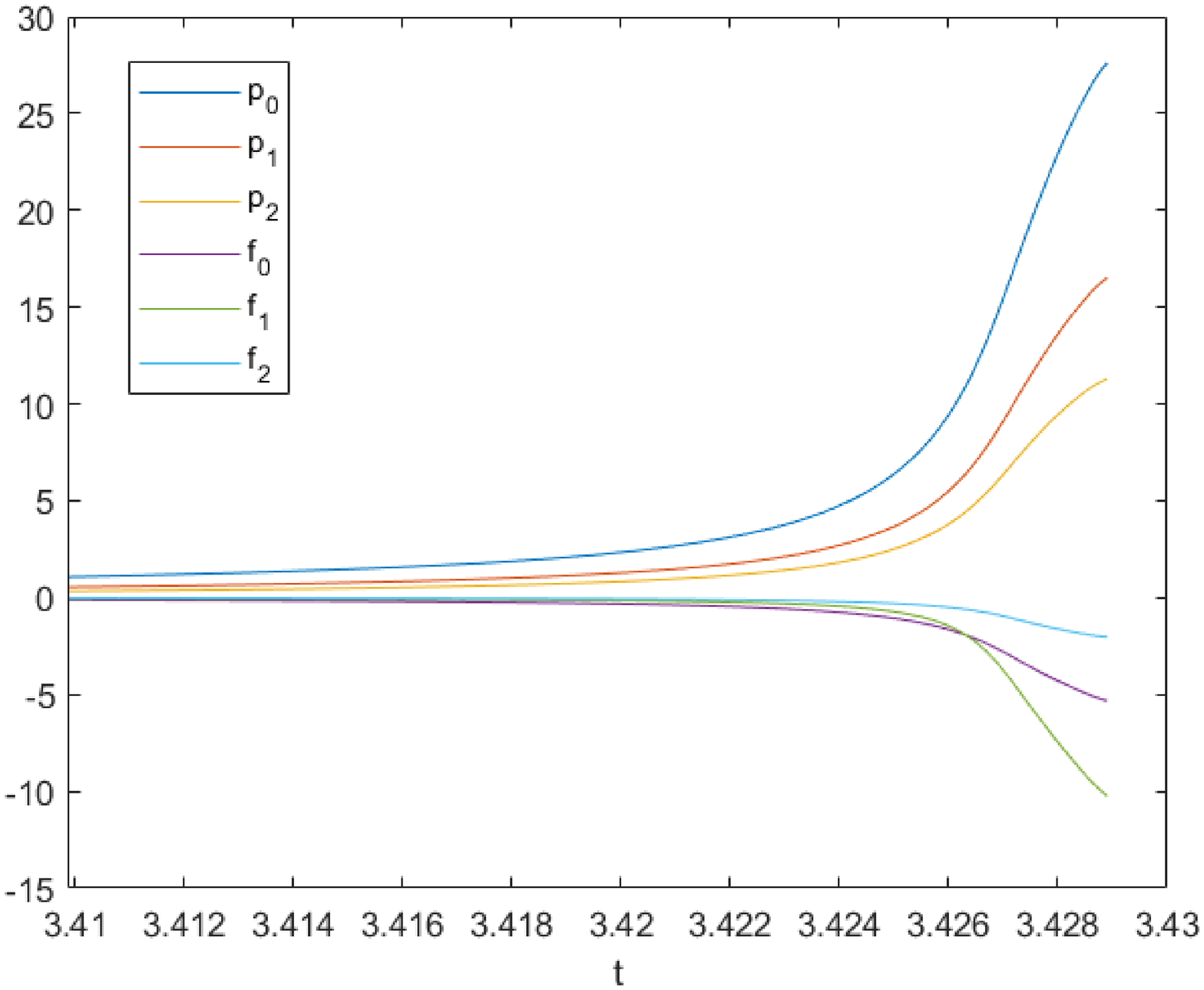}
        \end{minipage}
        \begin{minipage}[c]{0.49\textwidth}
            \centering
            \includegraphics[width=7cm]{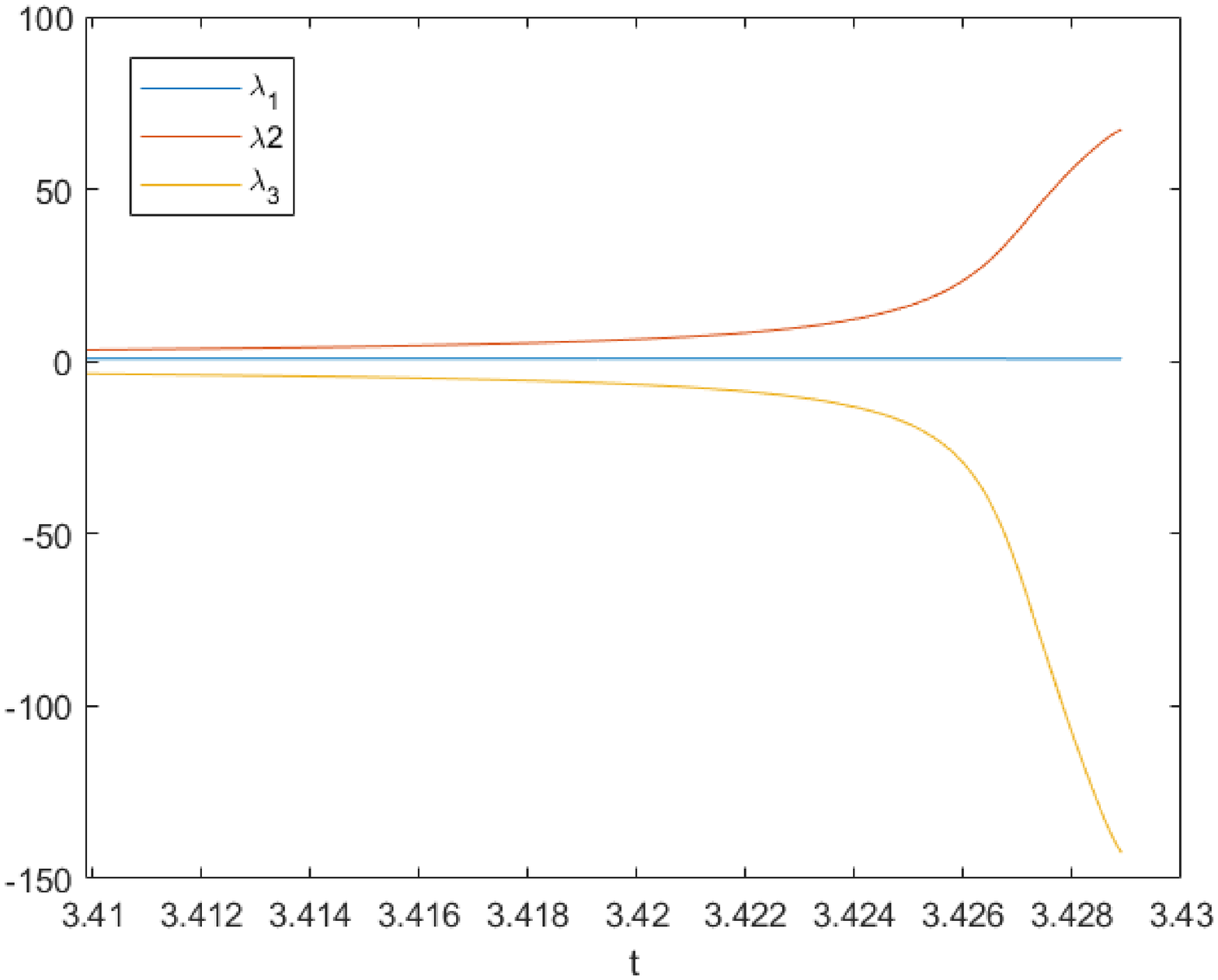}
        \end{minipage}
    \caption{Changes of the coefficients of the first few terms in the expansion formula \eqref{eq:approximate_solution3} during blow up. Left: evolution of the coefficients $\{p_k,f_k\}_{k=0}^2$. Right:evolution of the coefficients $\{\lambda_k\}_{k=1}^3$.  }
    \label{fig:coeff}
\end{figure}

Recall that
\begin{equation}
    \lambda_3(t)=\partial_vp(V_F,t)=-\frac{N(t)}{a},\qquad \partial_vp(V_R^+,t)=\lambda_2(t)+\lambda_3(t).
\end{equation}
Therefore, $\lambda_2$ and $\lambda_3$ are directly influenced by the firing rate. Due to the use of global basis functions, as the firing rate $N(t)$ increases, all the basis functions are affected. In response to the change of $\lambda_3$, $\lambda_2$ and the coefficients of the basis functions in $\mathrm{W}_2$ change accordingly, respectively controlling the derivative value on both sides of point $V_R$ and the function value in the interval. Figure \ref{fig:coeff} show the change of coefficients $\{p_k,f_k\}_{k=0}^2$, $\{\lambda_k\}_{k=1}^3$ in \eqref{eq:approximate_solution3} as time involves. It can be seen from the figure that the changes in $\lambda_2$ and $\lambda_3$ are most obvious, while the coefficients of all basis functions in $\mathrm{W}_2$ space are affected but the changes are relatively small.

\subsubsection{Relative entropy}

As we have mentioned, since little is known about the properties of the solutions of the Fokker-Planck equation \eqref{eq:problem1}, there  is a lack of complete understanding of the long-time asymptotic behavior in the continuous case. In \cite{caceres2011analysis}, they studied relative entropy theory for linear problem $a_1=b=0$, which implies exponential convergence to equilibrium. The relative entropy is given by
\begin{equation}
    I_e=\int_{-\infty}^{V_{F}} G\left(\frac{p(v, t)}{p^{\infty}(v)}\right) p_{\infty}(v) d v,
\end{equation}
which can be shown to be decreasing in time, where $G(\cdot)$ is a smooth convex function and $p^{\infty}(v)$ represents the stationary solution. In this part, we numerically verify the relative entropy theory. The numerical relative entropy is given by
\begin{equation}
    S(t)=\int_{V_L}^{V_{F}} G\left(\frac{p_N(v, t)}{p^{\infty}(v)}\right) p_{\infty}(v) d v.
\end{equation}

We consider nonlinear cases with $a_0=1,a_1=0,b=-0.5$ and $a_0=1,a_1=0.1,b=0$. We choose the numerical solution of a sufficiently long time as the stationary solution $p^{\infty}(v)$ and the Gaussian initial condition $v_0=-1, \sigma_0^2=0.5$. Figure \ref{fig:relative_entropy2} \ref{fig:relative_entropy3} show the time evolution of the firing rate and the numerical relative entropy for these cases.
\begin{figure}[!htb]
    \centering
    \begin{minipage}[c]{0.49\textwidth}
        \centering
        \includegraphics[width=1\textwidth]{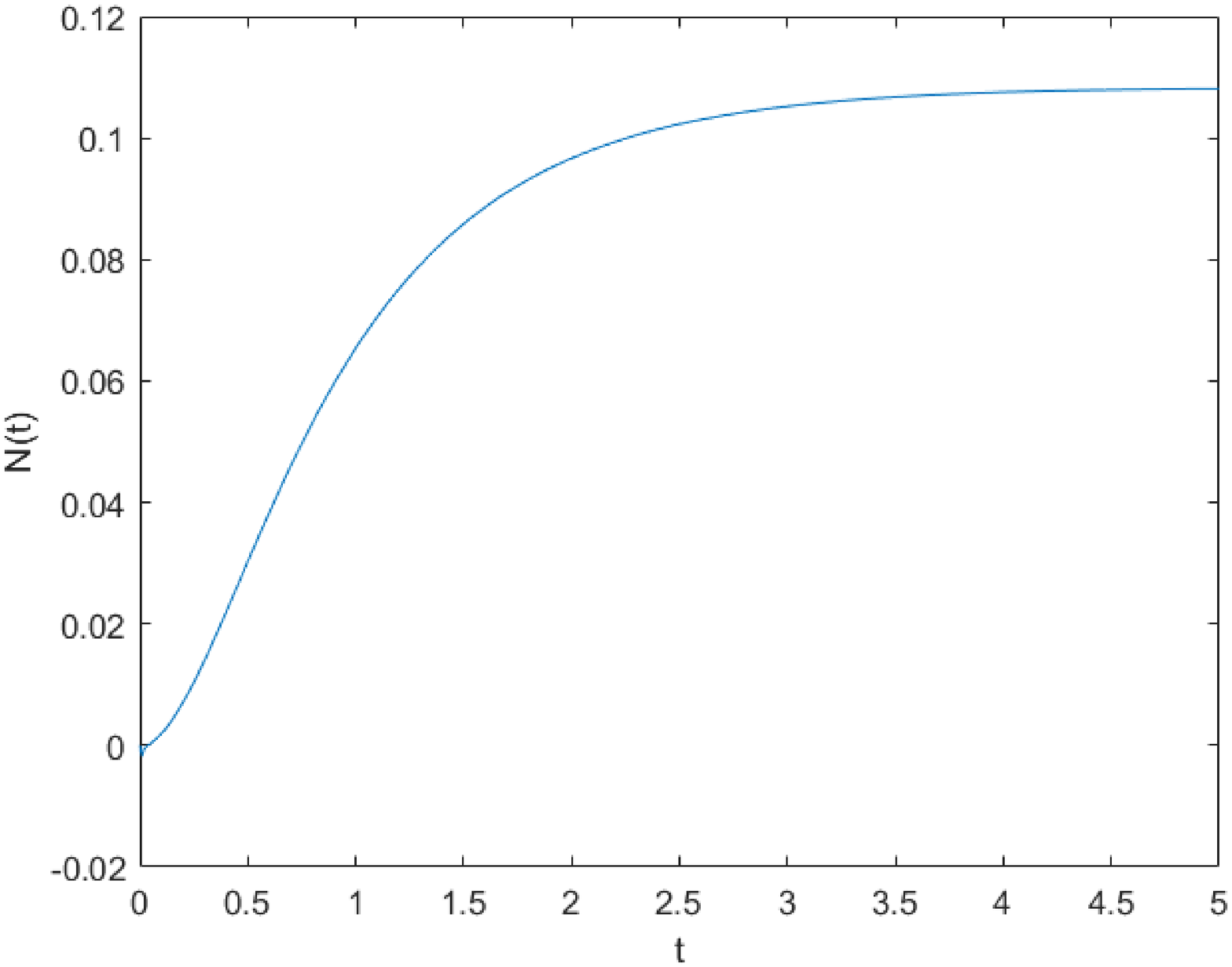}
    \end{minipage}
    \begin{minipage}[c]{0.49\textwidth}
        \centering
        \includegraphics[width=1\textwidth]{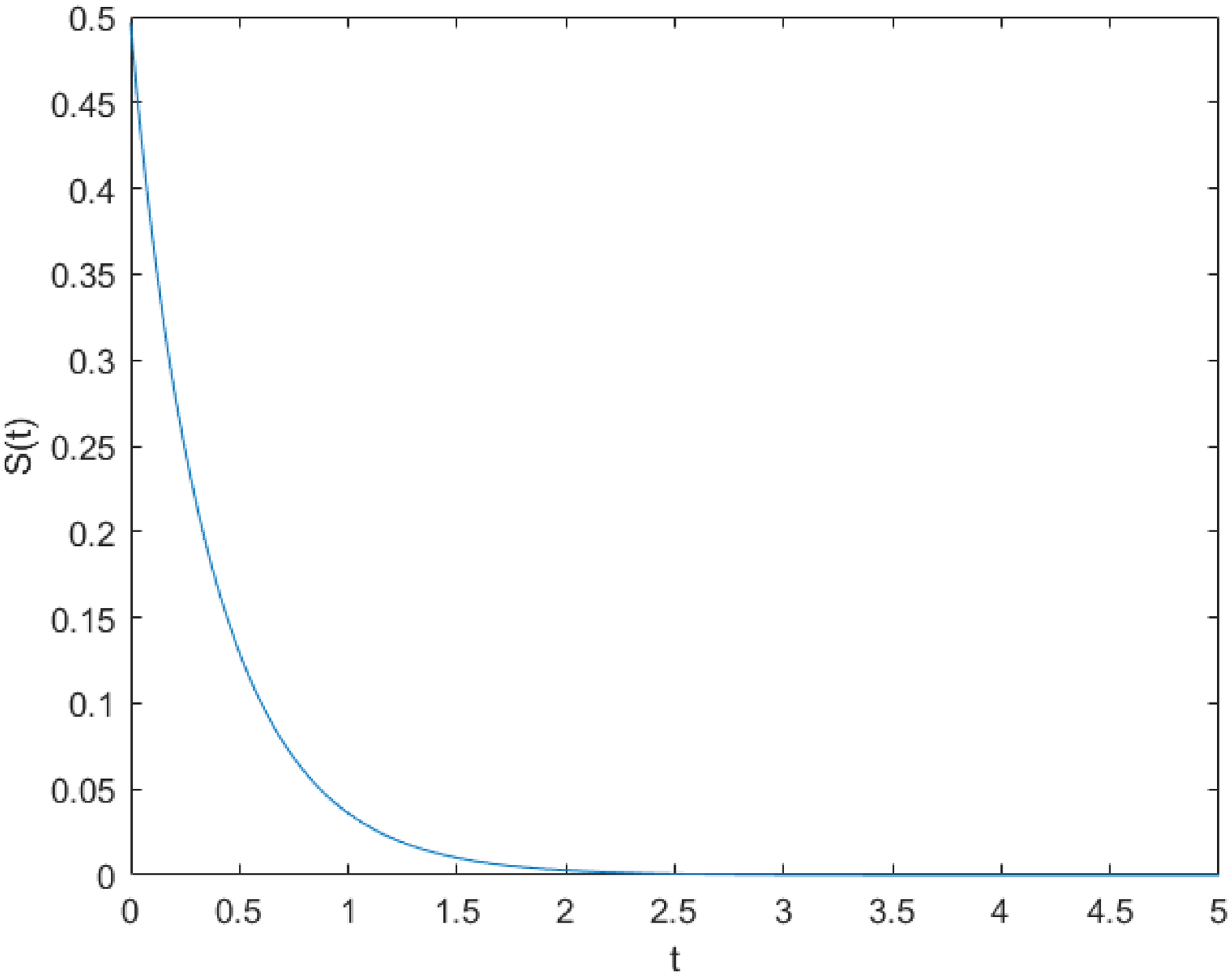}
    \end{minipage}
    \caption{Equation parameters $a=1,b=-0.5$ with Gaussian initial condition $v_0=-1, \sigma_0^2=0.5$. Left: evolution of firing rate $N(t)$. Right: evolution of relative entropy $S(t)$ with $G(x)=\frac{(x-1)^2}{2}$.}
     \label{fig:relative_entropy2}
\end{figure}
\begin{figure}[!htb]
    \centering
    \begin{minipage}[c]{0.49\textwidth}
        \centering
        \includegraphics[width=1\textwidth]{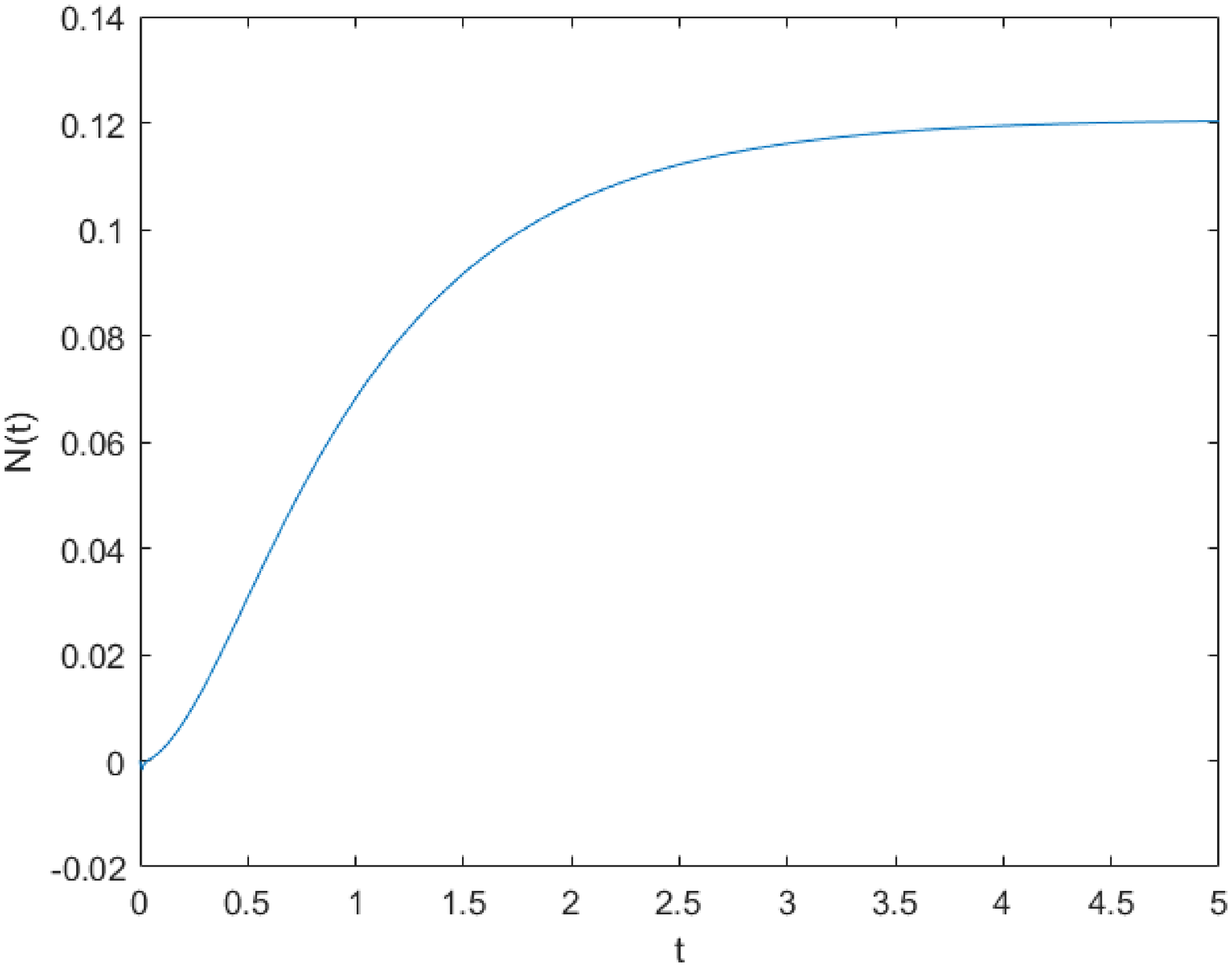}
    \end{minipage}
    \begin{minipage}[c]{0.49\textwidth}
        \centering
        \includegraphics[width=1\textwidth]{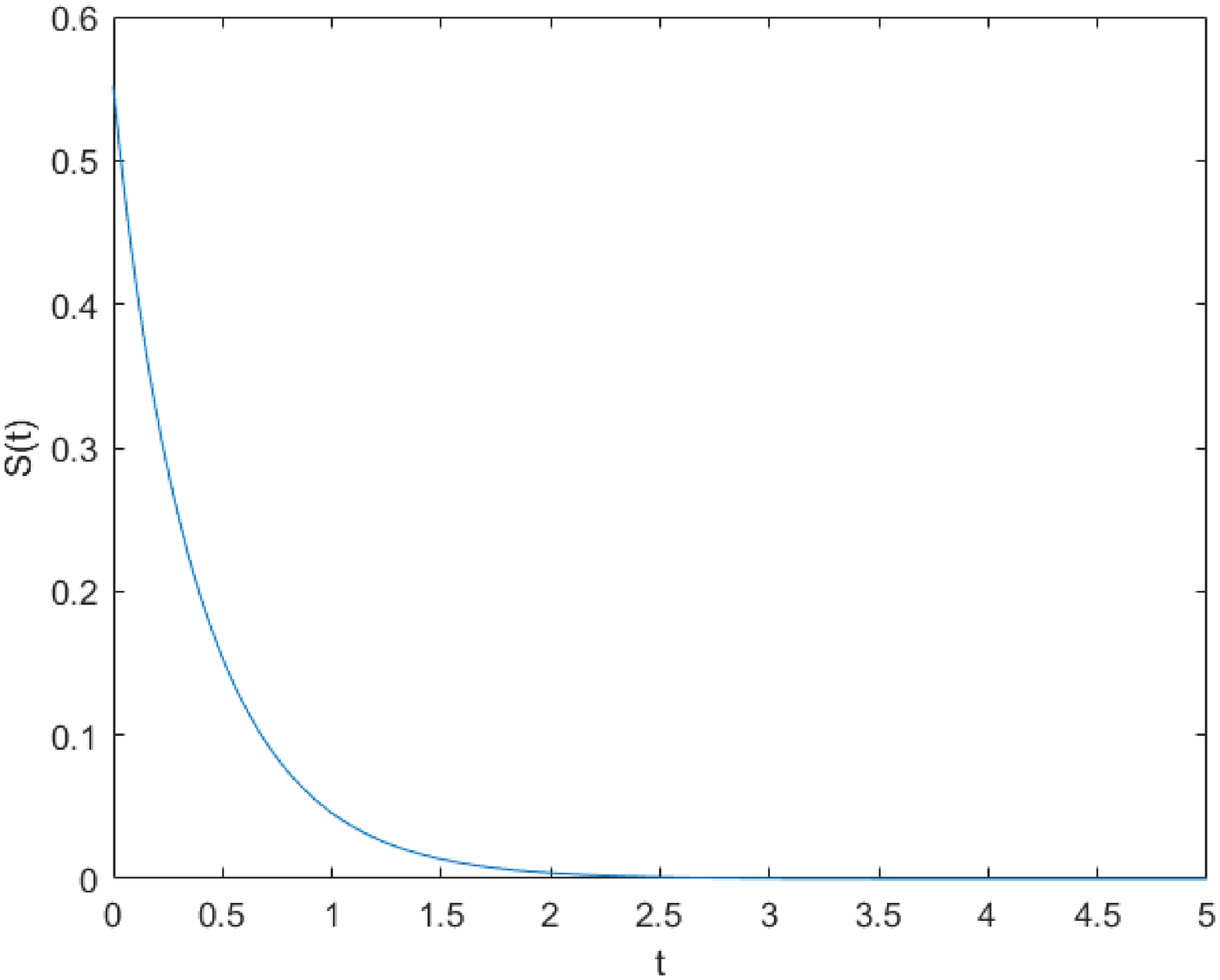}
    \end{minipage}
    \caption{Equation parameters $a_0=1,a_1=0.1,b=0$ with Gaussian initial condition $v_0=-1, \sigma_0^2=0.5$. Left: evolution of firing rate $N(t)$. Right: evolution of relative entropy $S(t)$ with $G(x)=\frac{(x-1)^2}{2}$.}
     \label{fig:relative_entropy3}
\end{figure}

As shown in \cite{caceres2011analysis}, there may be two stationary solutions for the system of $b>0$. For example, when $ a(N(t)) = 1$ and $b =1.5$, there are two different steady states whose firing rates are $N^{\infty}=2.319$ and $N^{\infty}=0.1924$. Given the firing rate $N^{\infty}$, the expression of $p^{\infty}(v)$is given by
\begin{equation}
    p^{\infty}(v)=\frac{N^{\infty}}{a\left(N^{\infty}\right)} e^{-\frac{h\left(v, N^{\infty}\right)^{2}}{2 a\left(N^{\infty}\right)}} \int_{\max \left\{v, V_{R}\right\}}^{V_{F}} e^{\frac{h\left(\omega, N^{\infty}\right)^{2}}{2 a\left(N^{\infty}\right)}} d \omega ,
\end{equation}
which is the stationary solution when we calculate the relative entropy for multiple steady-state problems. The results are shown in Figure \ref{fig:relative_entropy4}, where the steady state with a larger firing rate $N^{\infty}=2.319$ is unstable while the stationary solution with a lower firing rate $N^{\infty}=0.1915$ is stable. We see that the relative entropy decreases with time for the stable state, while the other one does not.
\begin{figure}[!htb]
    \centering
    \begin{minipage}[c]{0.49\textwidth}
        \centering
        \includegraphics[width=1\textwidth]{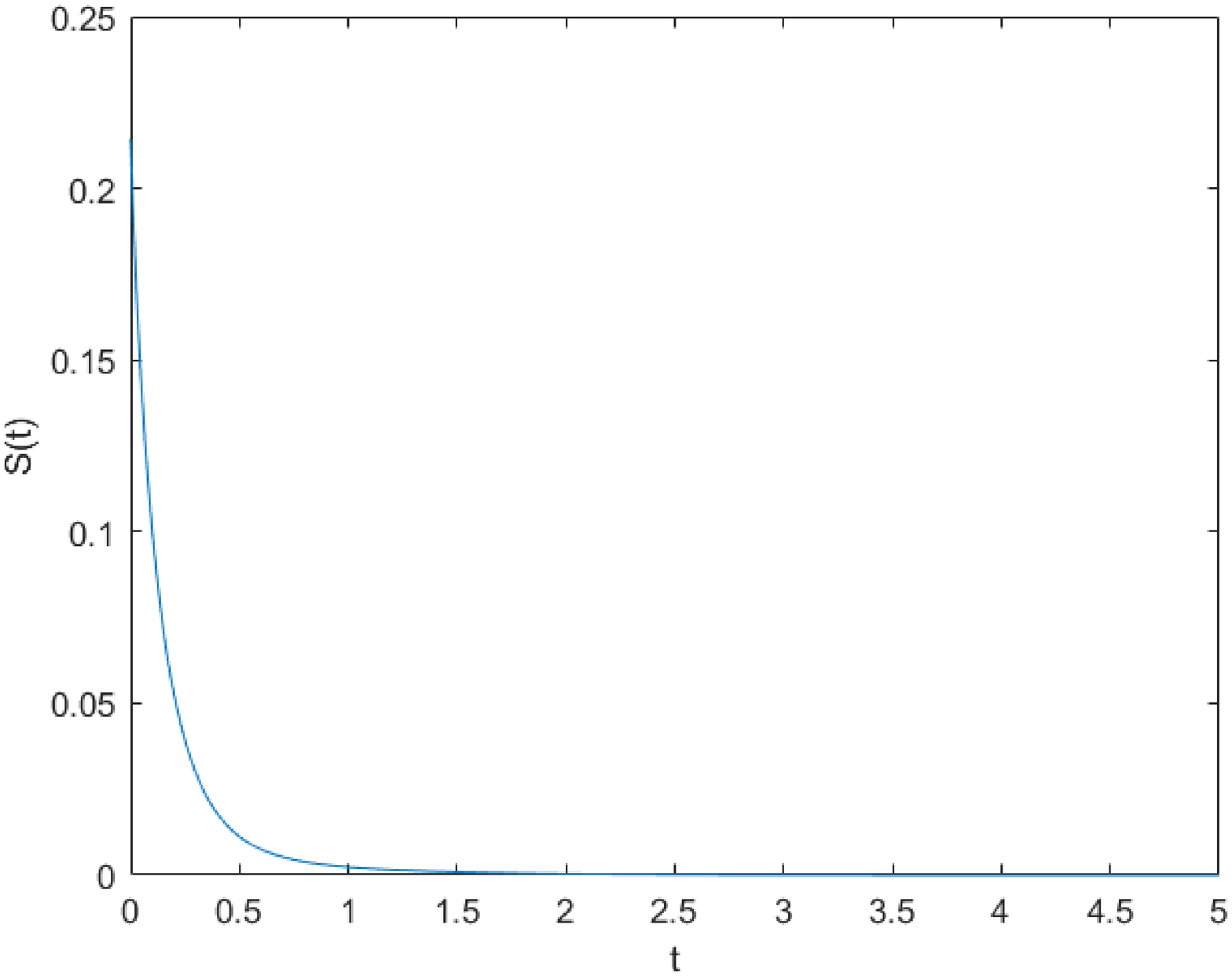}
    \end{minipage}
    \begin{minipage}[c]{0.49\textwidth}
        \centering
        \includegraphics[width=1\textwidth]{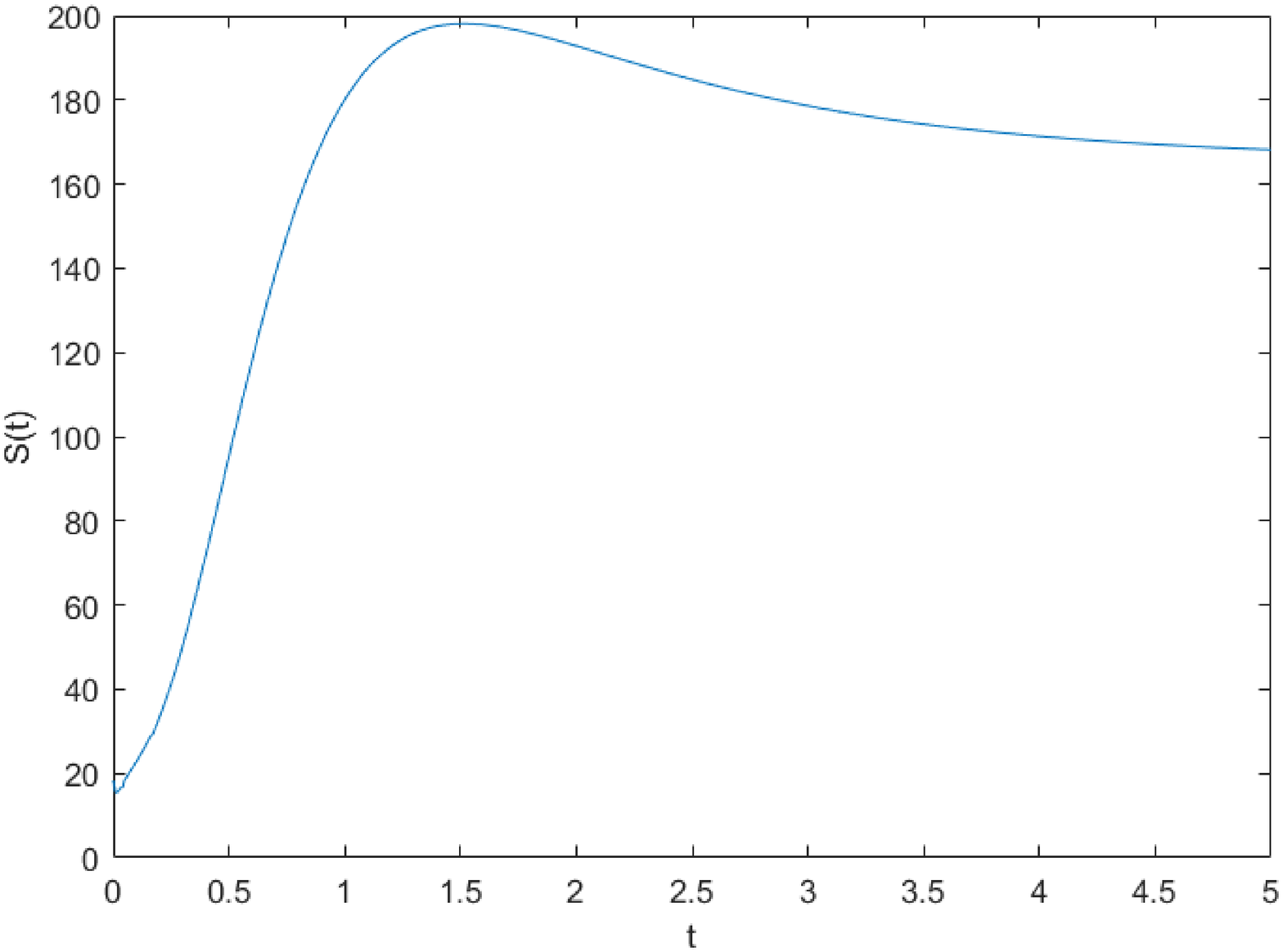}
    \end{minipage}
    \caption{Equation parameters $a=1,b=1.5$  with Gaussian initial condition $v_0=-1, \sigma_0^2=0.5$. In this case, the model has two stationary states with firing rates $N^{\infty}=0.1924$ and $N^{\infty}=2.319$. Left: evolution of relative entropy $S(t)$ with $G(x)=\frac{(x-1)^2}{2}$ for stable state with $N^{\infty}=0.1924$. Right: evolution of relative entropy $S(t)$ with $G(x)=\frac{(x-1)^2}{2}$ for unstable state with $N^{\infty}=2.319$ }
     \label{fig:relative_entropy4}
\end{figure}

\subsection{Learning and testing in NNLIF model with learning rules}\label{sec:Learning_testing}
In this part, we consider the learning and discrimination abilities in NNLIF model with learning rules. In \cite{perthame2017distributed}, the authors proposed a two-phase test to illustrate the discrimination property:\medskip

\textbf{Learning phase}
\smallskip

1. An heterogeneous input $I(w)$ is presented to the system, when the learning process is active. The initial data is supported on inhibitory weights and the learning rule is determined for the present weights by $-N(w)\bar{N}$ by taking $K(w) = -1$ if $w \leq 0$.

2. After some time, the synaptic weight distribution $H(w, t)$ converges to an equilibrium distribution $H^*_
I(w)$, which depends on $I$.\medskip

\textbf{Testing phase}
\smallskip

1. The learning process is now switched off, i.e. there is no w-direction convection, and a new input $J(w)$ is presented to the system.

2. After some time, the solution $p_J (v,w, t)$ reaches an equilibrium $p^*_J (v,w)$, which is characterized  by the output signal $N^*_J(w)$ which is the neural activity distribution across the heterogeneous populations.\medskip

Some numerical explorations of the learning behavior and discriminative properties of the model have been done in \cite{perthame2017distributed}\cite{he2022structure}. When the learning phase is over, in addition to the synaptic weight distribution $H(w, t)$, the equilibrium state $N_I(w)$ of the sub-network activity $N(w,t)$ can also be obtained, which we call the \textbf{prediction signal}. In the previous work on the time-independent input function $I(w)$ for the learning phase \cite{perthame2017distributed}\cite{he2022structure}, the prediction signal $N_I(w)$ is like a triangle depending on the input function $I(w)$ of the learning phase.  After the testing phase when the learning input $I(w)$ and testing input $J(w)$ are the same, the output signal $N_J^*(w)$ is like a triangle that matches the prediction signal $N_I(w)$; but when $I(w)$ and $J(w)$ are different, the output signal is not in a regular shape.

They explore learning and discriminative power in the model only if the input function is constant in time. In our work, we plan to explore how the model would react to a time-varying input signal through numerical experiments, and there have been studies in the field of neuroscience surrounding time-varying input \cite{isidori1990output}. Especially, we consider input functions that are time-periodic and explore the effect of oscillation periods on the learning ability of the model. To this aim, we have designed $4$ sets of experiments, progressively revealing the nature of its learning behavior.

\paragraph{Test 1. Synchronizing with oscillating inputs.}
We choose the testing input functions
\begin{equation}
    \begin{aligned}
        I_{1}&=\pi^{-\frac{1}{4}} e^{-\frac{1}{2}(10 w+5)^{2}}+2 \\
        I_{2}&=\pi^{-\frac{1}{4}} \sqrt{2}(10 w+5) e^{-\frac{1}{2}(10 w+5)^{2}}+2,
    \end{aligned}
\end{equation}
and the learning input function is periodically switching between those two
\begin{equation}
    \label{input}
    I(w,t)=a(t)I_1(w)+b(t)I_2(w),
\end{equation}
where
\begin{equation}
    \label{eq:input_coff}
    \begin{aligned}
        a(t)&=\frac{1+\cos(\frac{2\pi t}{D})}{2},\\
        b(t)&=1-a(t).
    \end{aligned}
\end{equation}

For other parameters, we choose $V_F=2,V_R=1,V_{\text{min}}=1,a=1,\varepsilon=0.1,W_{\text{min}}=-1.1,W_{\text{max}}=0.1,T_{\text{max}}=4,\sigma(\bar{N})=\bar{N},\Delta t=2.5\times 10^{-4},\Delta w=0.01$ and the initial condition 
\begin{equation}
    p_{\text{init}}=\begin{cases}
        \text{sin}^2(\pi v)\text{sin}^2(\pi w) \qquad &-1<w<0\text{ and }-1<v<1,\\
        0 &\text{otherwise}.
    \end{cases}
\end{equation}

In the learning phase, the input function changes periodically in time; the smaller the period is, the greater the rate of change of the input function is. The total network activity $\bar{N}$ is an intuitive response to the input function, so we first observe the change in the total network activity. First, we choose period $D=1,0.5,0.2$. 
\begin{figure}[!htb]
    \centering
        \begin{minipage}[c]{0.3\textwidth}
            \centering
            \includegraphics[width=1\textwidth]{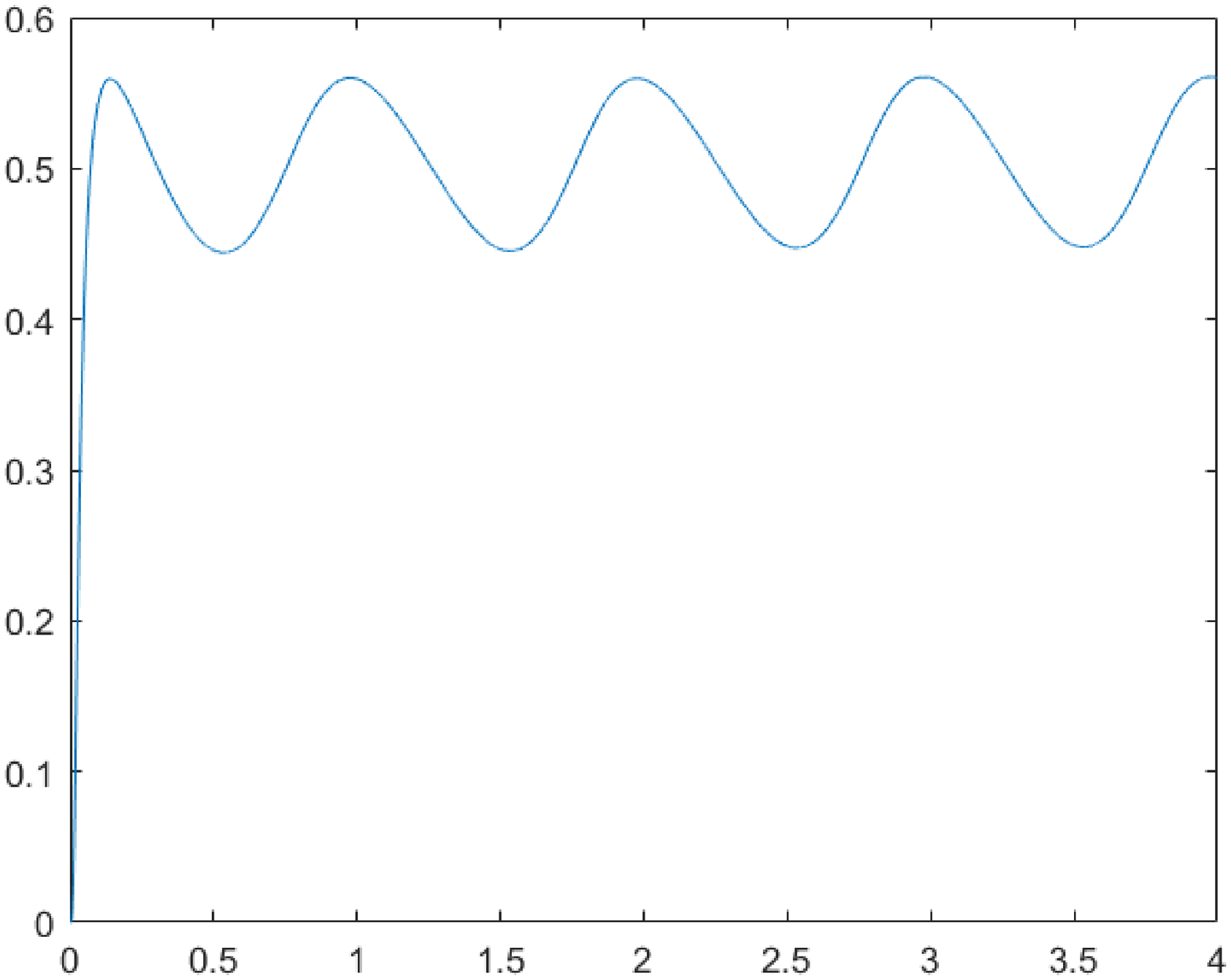}
        \end{minipage}
        \begin{minipage}[c]{0.3\textwidth}
            \centering
            \includegraphics[width=1\textwidth]{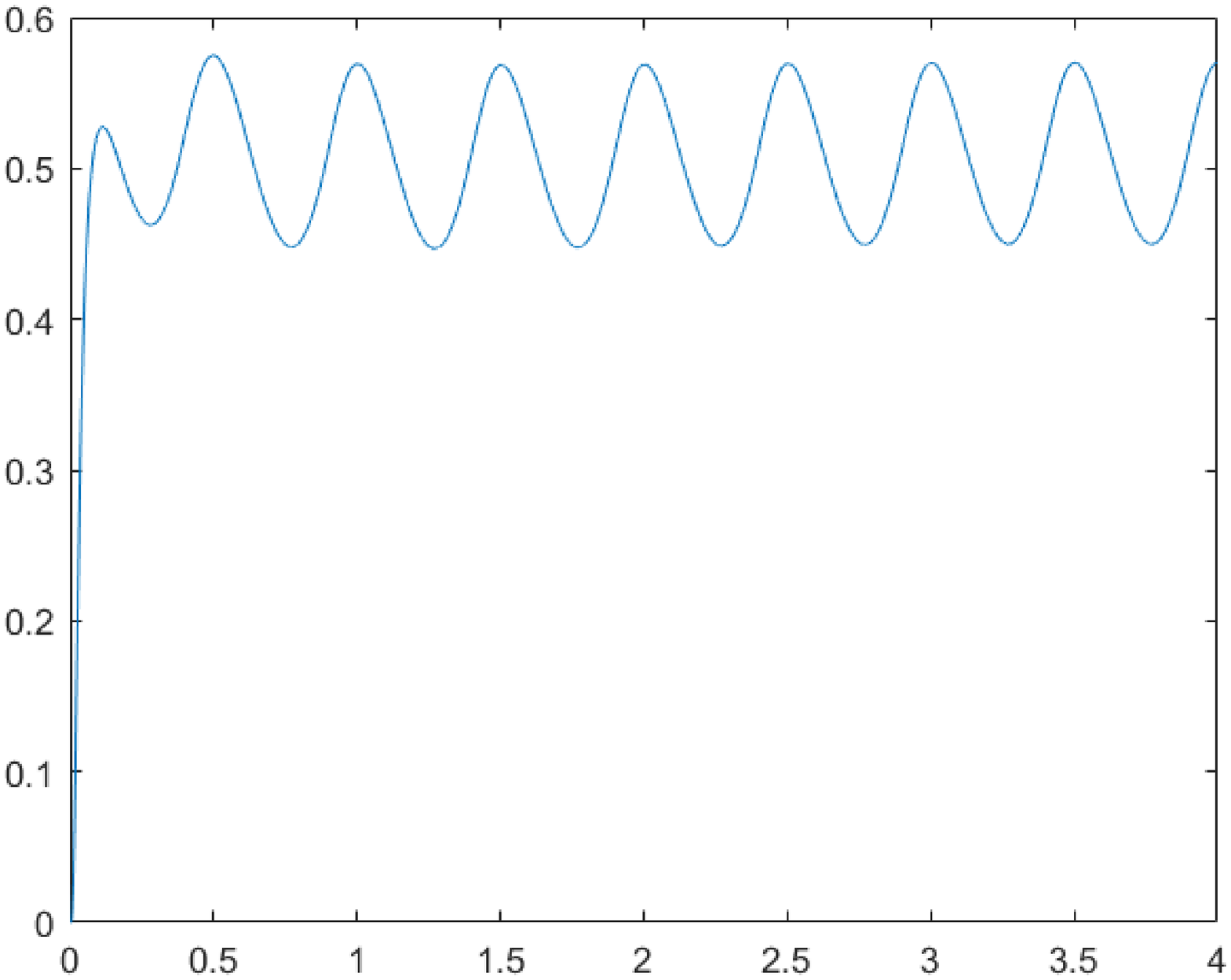}
        \end{minipage}
        \begin{minipage}[c]{0.3\textwidth}
            \centering
            \includegraphics[width=1\textwidth]{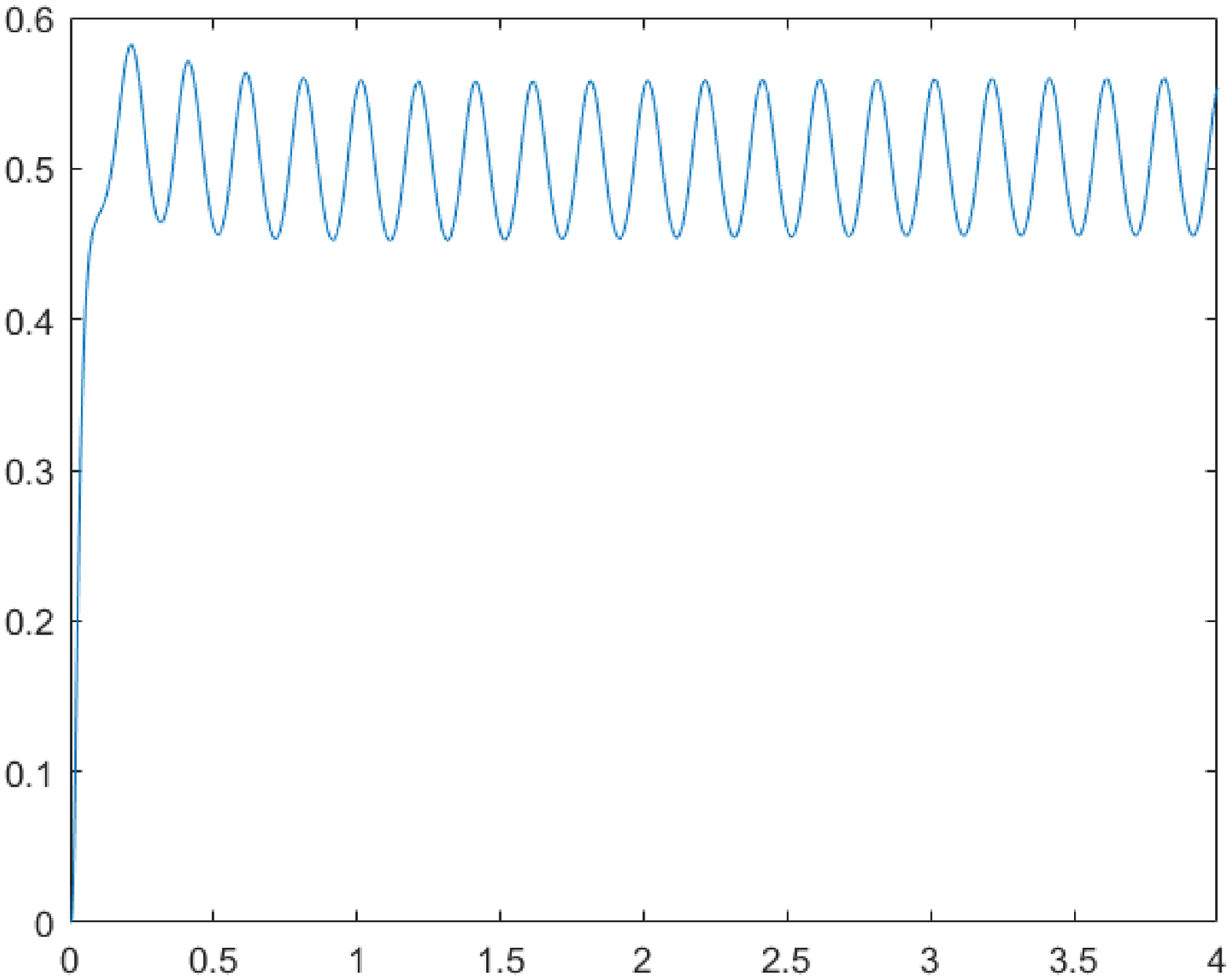}
        \end{minipage}
    \caption{Equation parameters $a=1$ and $\varepsilon=0.1$. The evolution of total firing rate $\bar{N}$. Left: the input function period $D=1$. Middle: the input function period $D=0.5$. Right: the input function period $D=0.2$.}
    \label{fig:Nbar}
\end{figure}

Figure \ref{fig:Nbar} shows the evolution of the total firing rate at different periods. As we expected, except for the initial transient evolutionary phase, the total activity of the network changes periodically over time and its period is the same as the input function.

\paragraph{Test 2. Adapting to fast oscillating inputs.}
Since the prediction signal is determined by the learning input function and reflects the model's learning of the learning input function $I(w,t)$, observing the prediction signal in different periods helps us explore the learning behavior of the model. We compare numerical results for different periods $D=4,0.4,0.2,0.1,0.01$.  In this case, the last input function learned by the model is $I(w,t_\text{max})=I_1$.

\begin{figure}[!htb]
    \centering
        \begin{minipage}[c]{0.3\textwidth}
            \centering
            \includegraphics[width=1\textwidth]{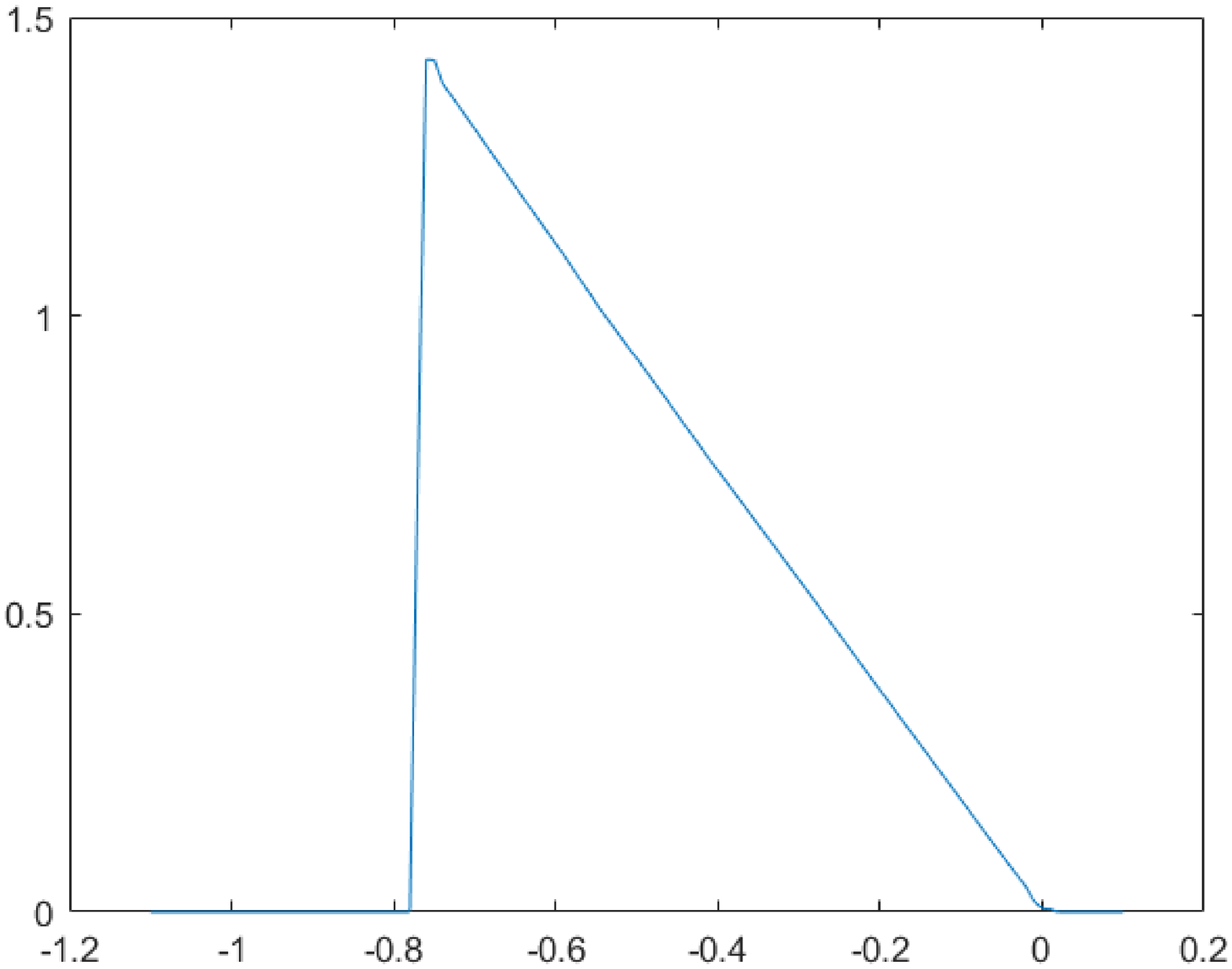}
        \end{minipage}
        \begin{minipage}[c]{0.3\textwidth}
            \centering
            \includegraphics[width=1\textwidth]{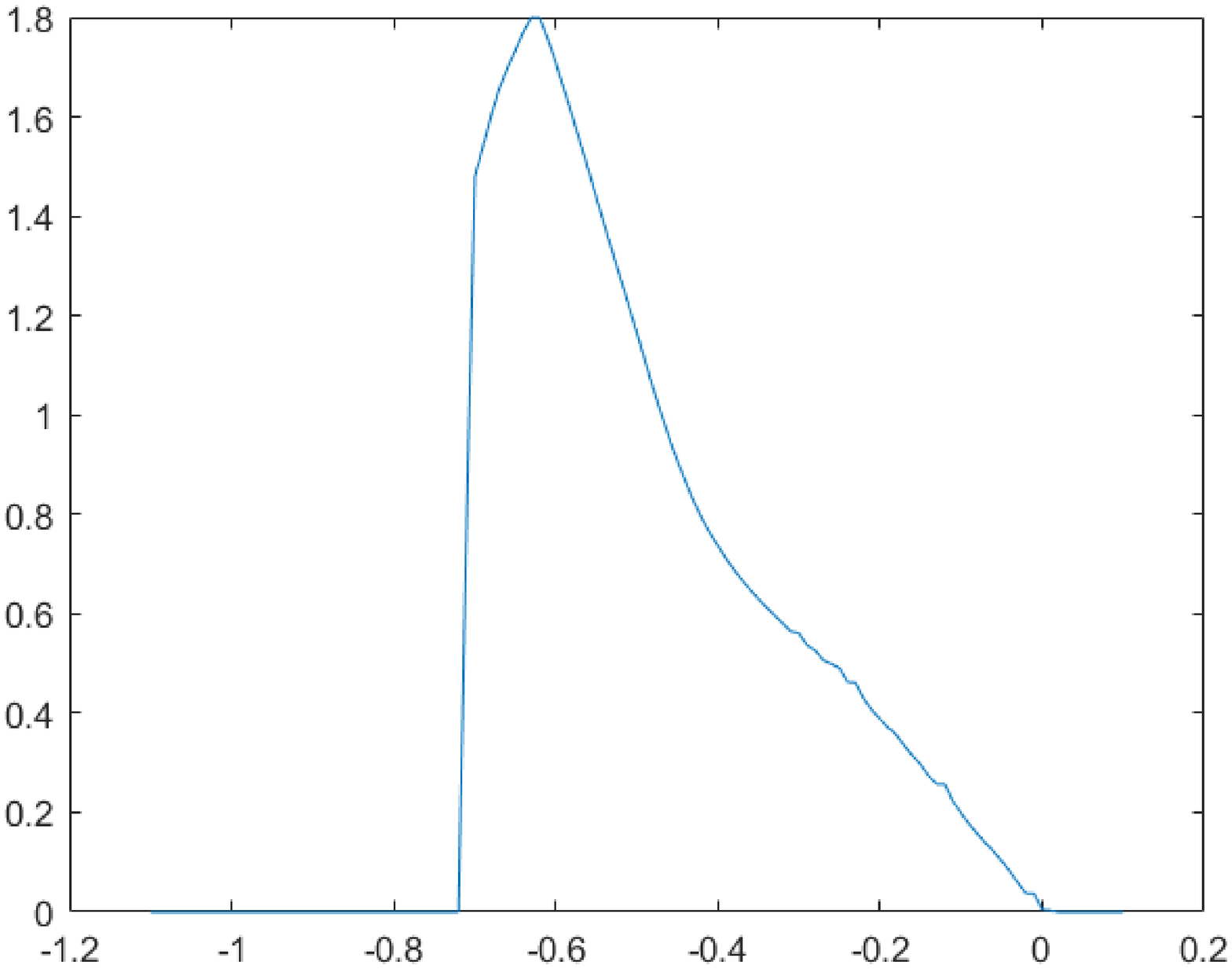}
        \end{minipage}
        \begin{minipage}[c]{0.3\textwidth}
            \centering
            \includegraphics[width=1\textwidth]{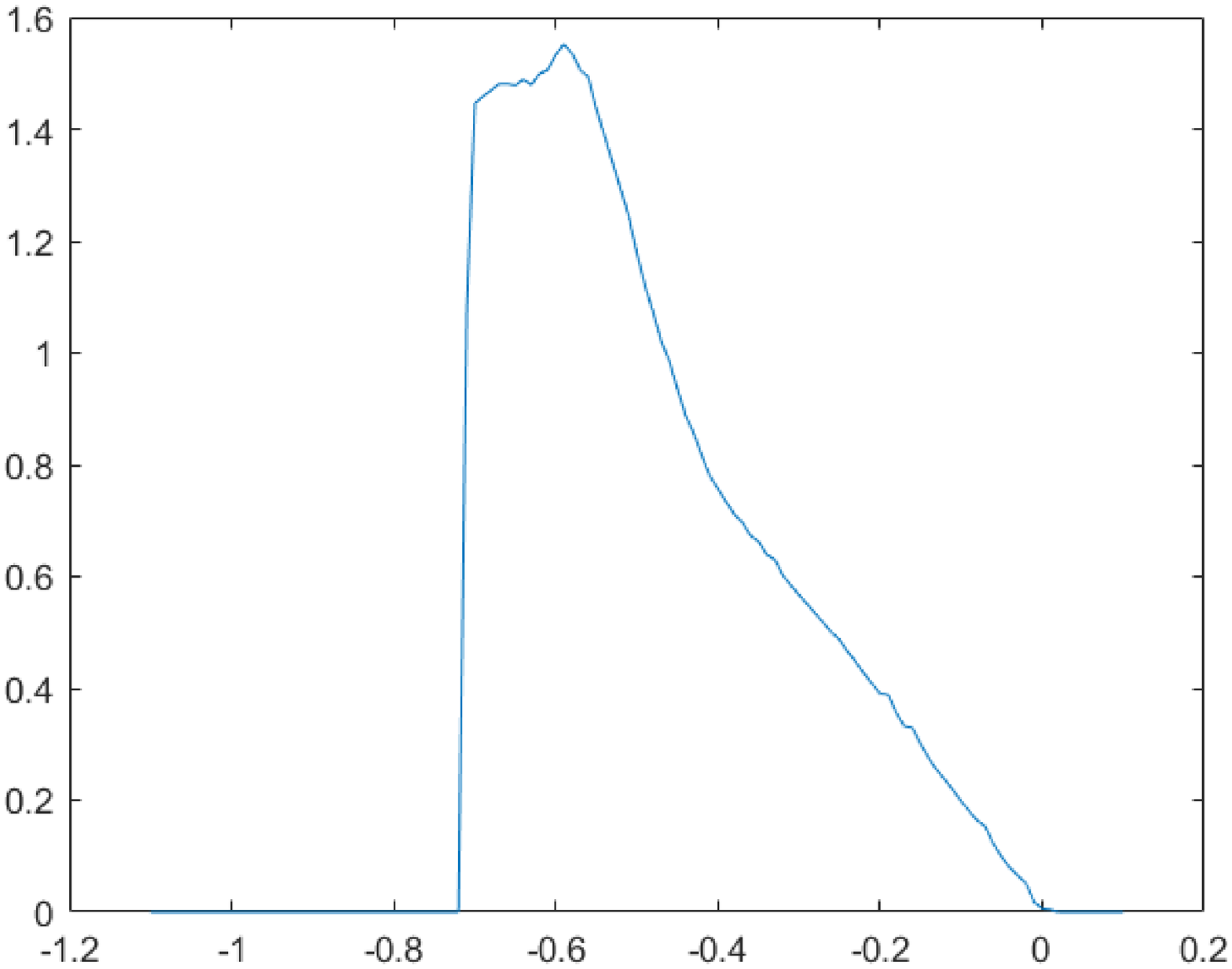}
        \end{minipage}
        \begin{minipage}[c]{0.3\textwidth}
            \centering
            \includegraphics[width=1\textwidth]{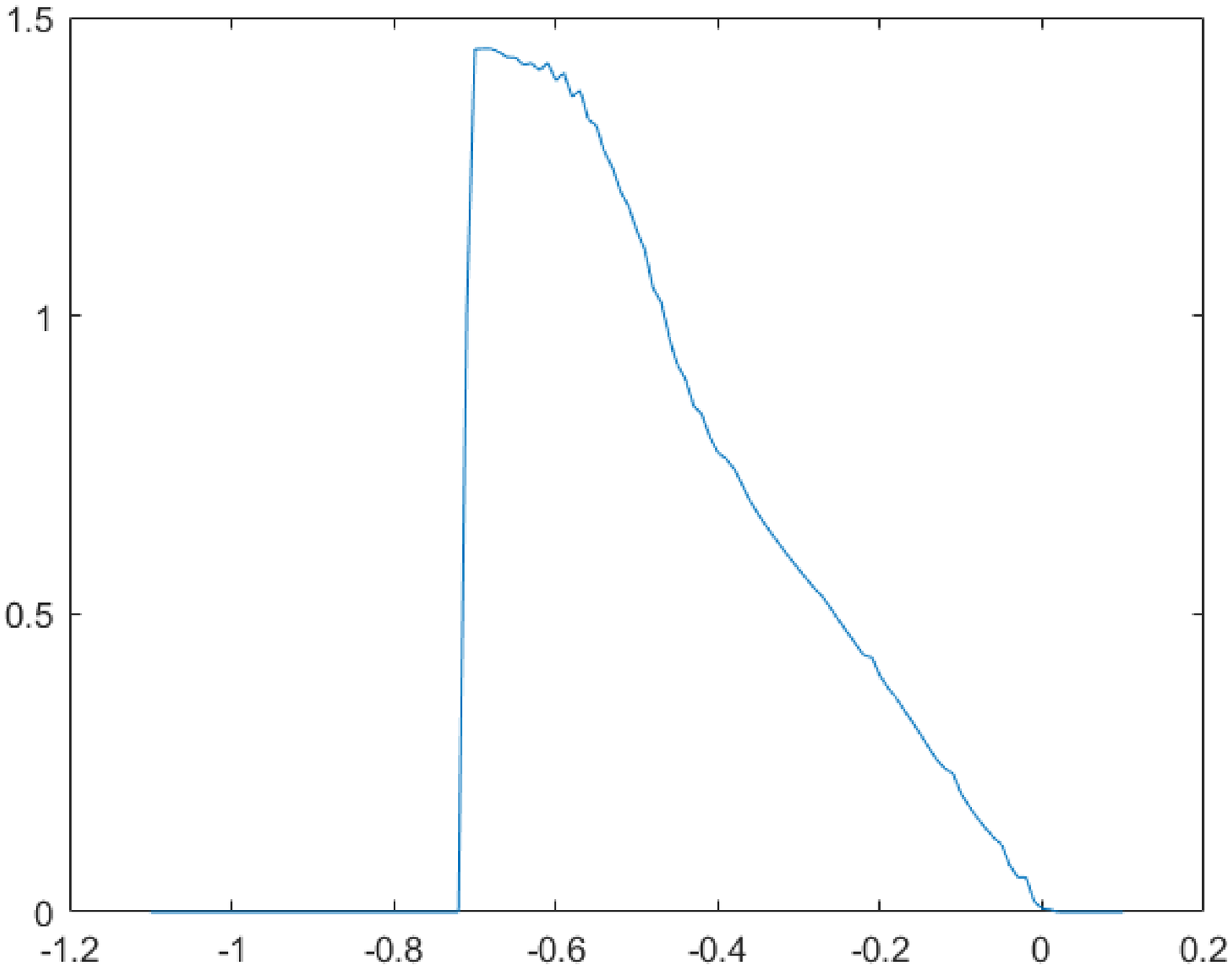}
        \end{minipage}
        \begin{minipage}[c]{0.3\textwidth}
            \centering
            \includegraphics[width=1\textwidth]{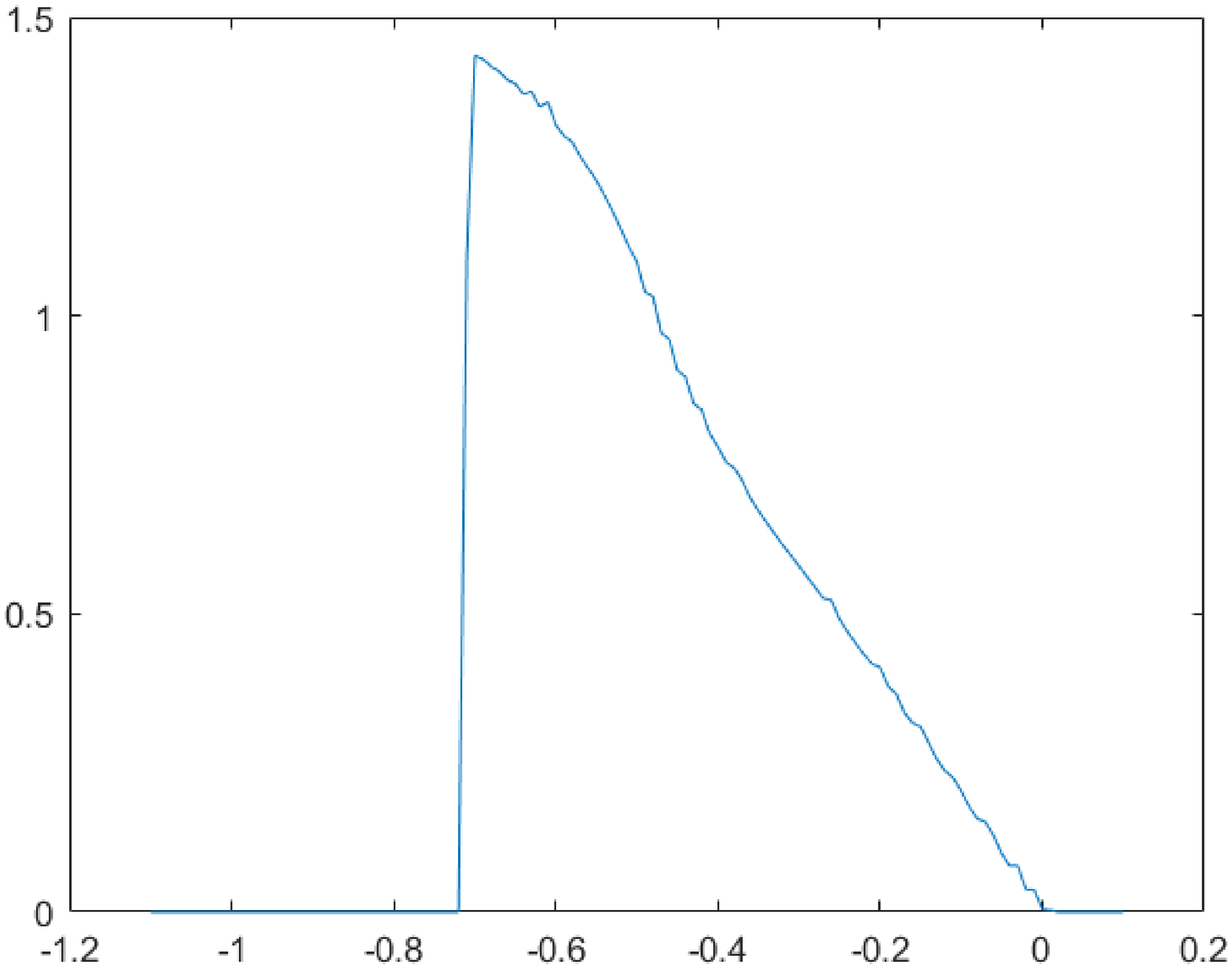}
        \end{minipage}
        \begin{minipage}[c]{0.3\textwidth}
            \centering
            \includegraphics[width=1\textwidth]{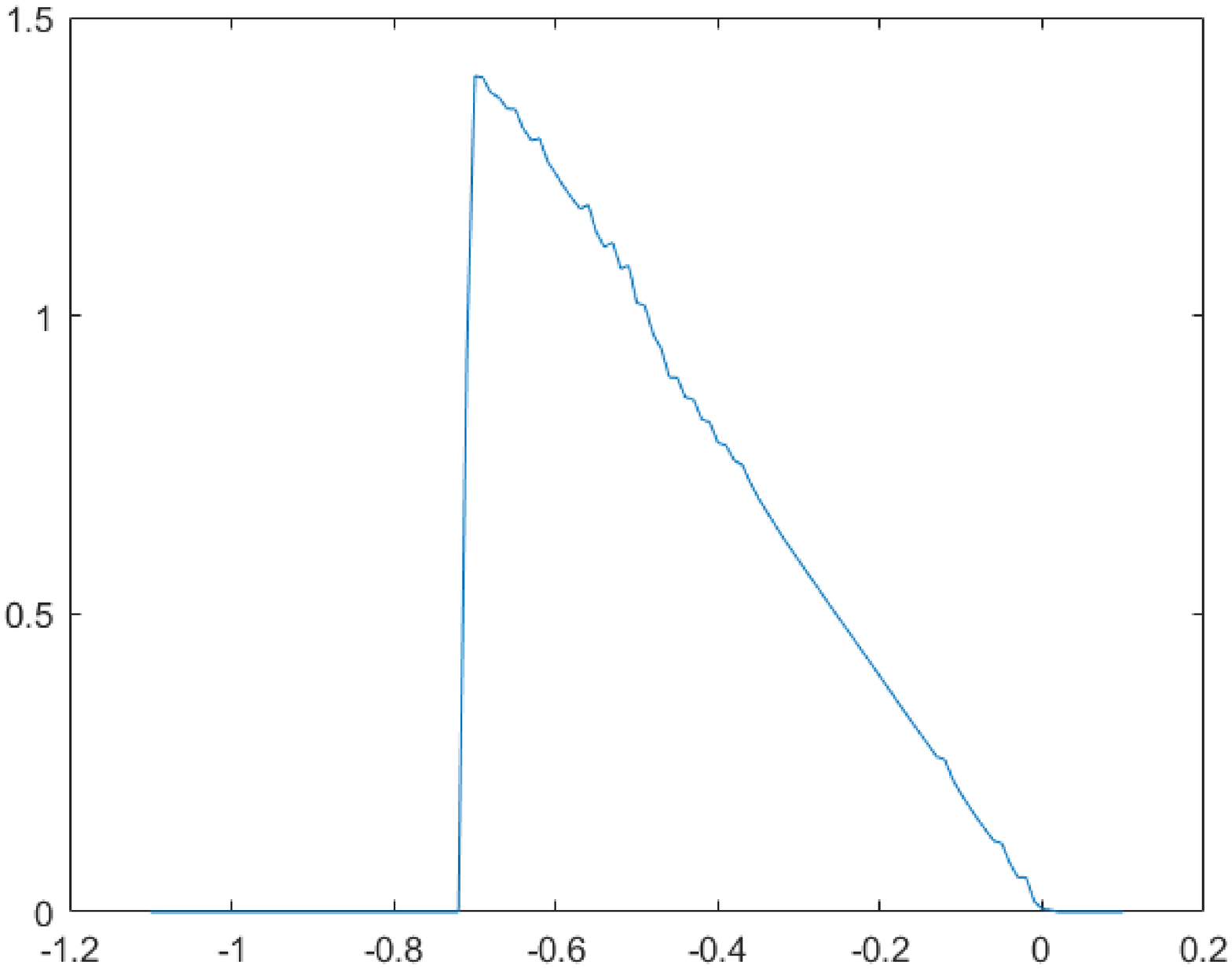}
        \end{minipage}
    \caption{Equation parameters $a=1$ and $\varepsilon=0.1$. The prediction signal at $t=4$ with different input function periods. Top: the input function period $D=4,0.4,0.2$ from left to right. Bottom: the input function period $0.1,0.05,0.01$ from left to right.}
    \label{fig:prediction}
\end{figure}

Figure \ref{fig:prediction} shows the prediction signal at different periods. When the period is large, the prediction signal is like a triangle. As the period gets smaller, the shape of the prediction signal is getting more and more irregular. However, as the period is getting further smaller, the shape of the prediction signal is becoming triangular again. In previous experiments \cite{perthame2017distributed}\cite{he2022structure}, for the time-independent learning input signal $I(w)$, the test signal always resembles a triangle. So we speculate from Figure \ref{fig:prediction} that for sufficiently large or sufficiently small periods, the predicted signal looks like a triangle, and the model has effectively learned a signal of a certain form.

\paragraph{Test 3. Learning from oscillating inputs.}

In order to verify the above conjecture, we choose a relatively large period with $D=4$ and a small period with $D=0.01$ in \eqref{eq:input_coff}.  In the testing phase, we choose testing input functions $J=I_1$, $J=I_2$, and $J=\frac{I_1+I_2}{2}$.
\begin{figure}[!htb]
    \centering
        \begin{minipage}[c]{0.3\textwidth}
            \centering
            \includegraphics[width=1\textwidth]{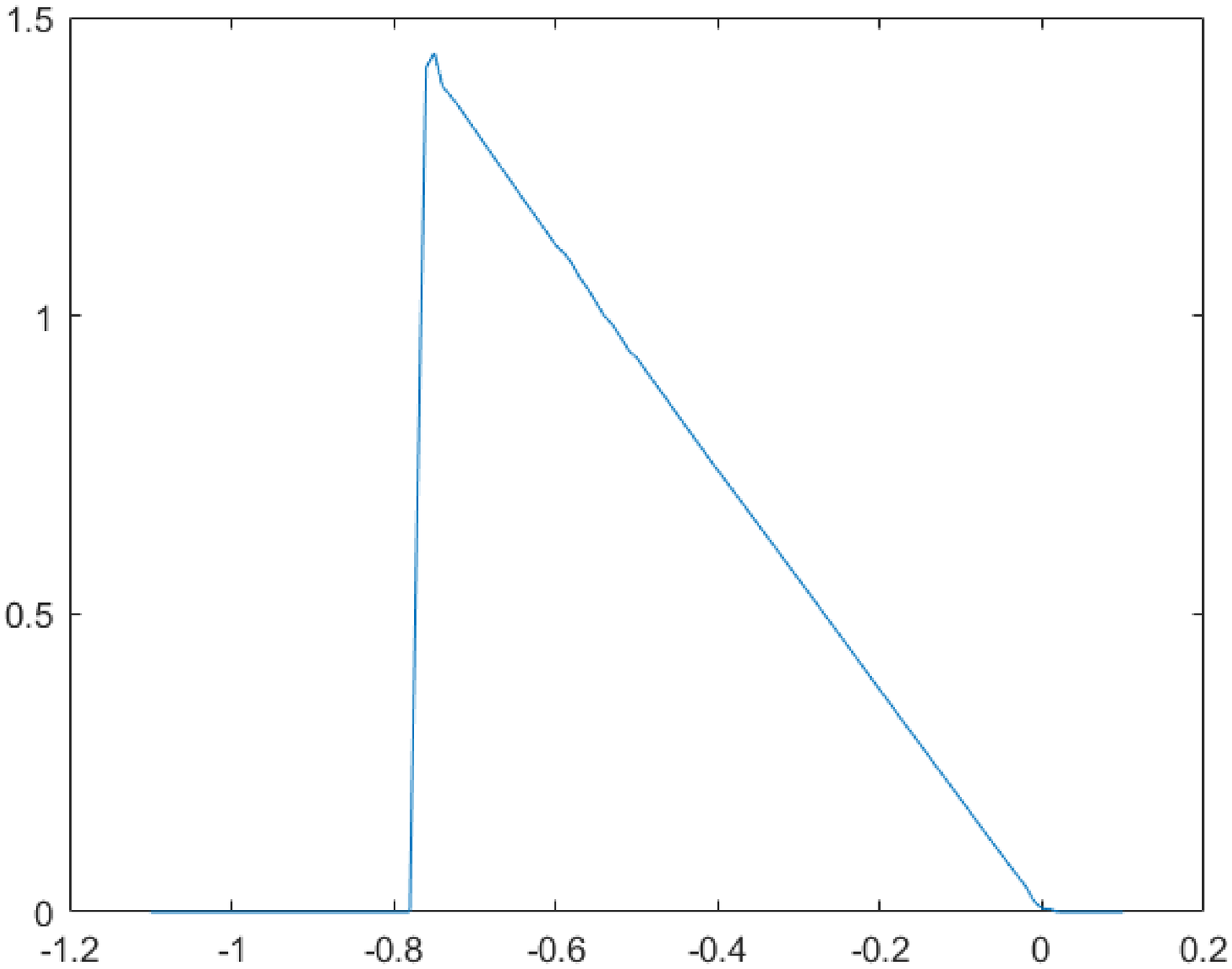}
        \end{minipage}
        \begin{minipage}[c]{0.3\textwidth}
            \centering
            \includegraphics[width=1\textwidth]{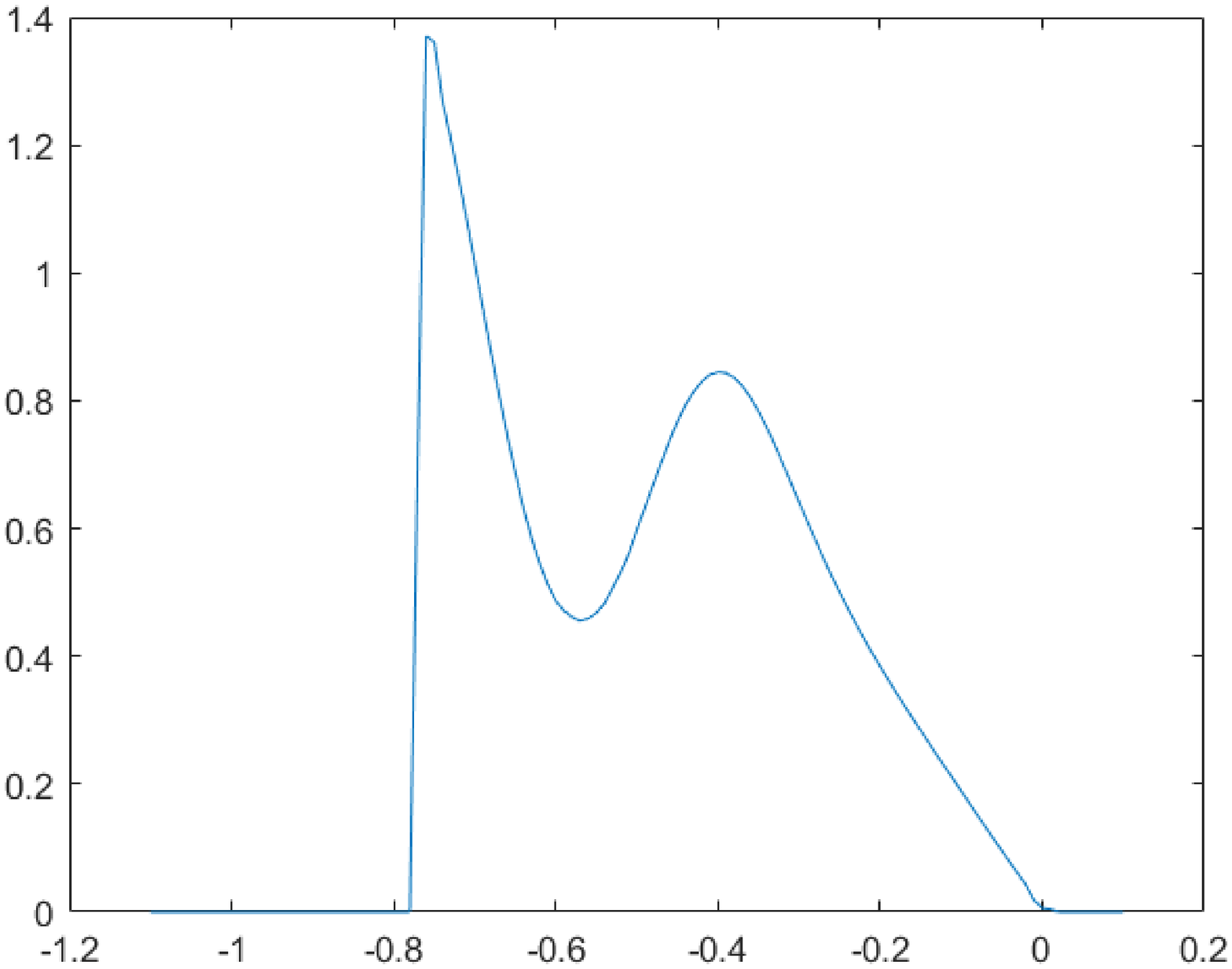}
        \end{minipage}
        \begin{minipage}[c]{0.3\textwidth}
            \centering
            \includegraphics[width=1\textwidth]{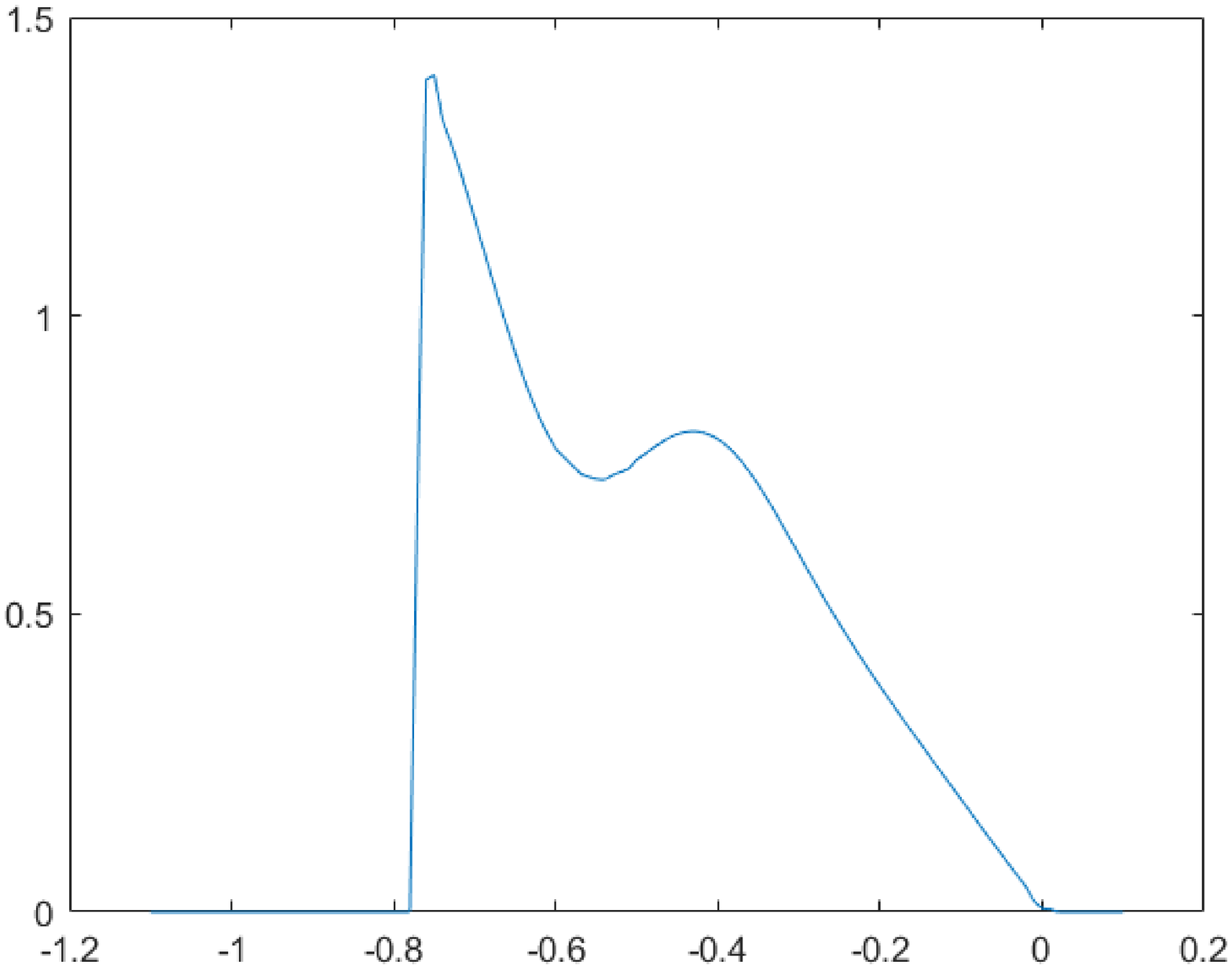}
        \end{minipage}
    \caption{(Output signal for the large period learning input) The
final firing rate $N(w)$ with different testing input $J(w)$. Equation parameters $a=1$ and $\varepsilon=0.1$, and the period of the input function in the learning phase is $D=4$. Left: Output signal with testing input function $J=I_1$. Middle: Output signal with testing input function $J=I_2$. Right: Output signal with testing input function $J=\frac{I_1+I_2}{2}$.}
    \label{fig:largeD}
\end{figure}
\begin{figure}[!htb]
    \centering
        \begin{minipage}[c]{0.3\textwidth}
            \centering
            \includegraphics[width=1\textwidth]{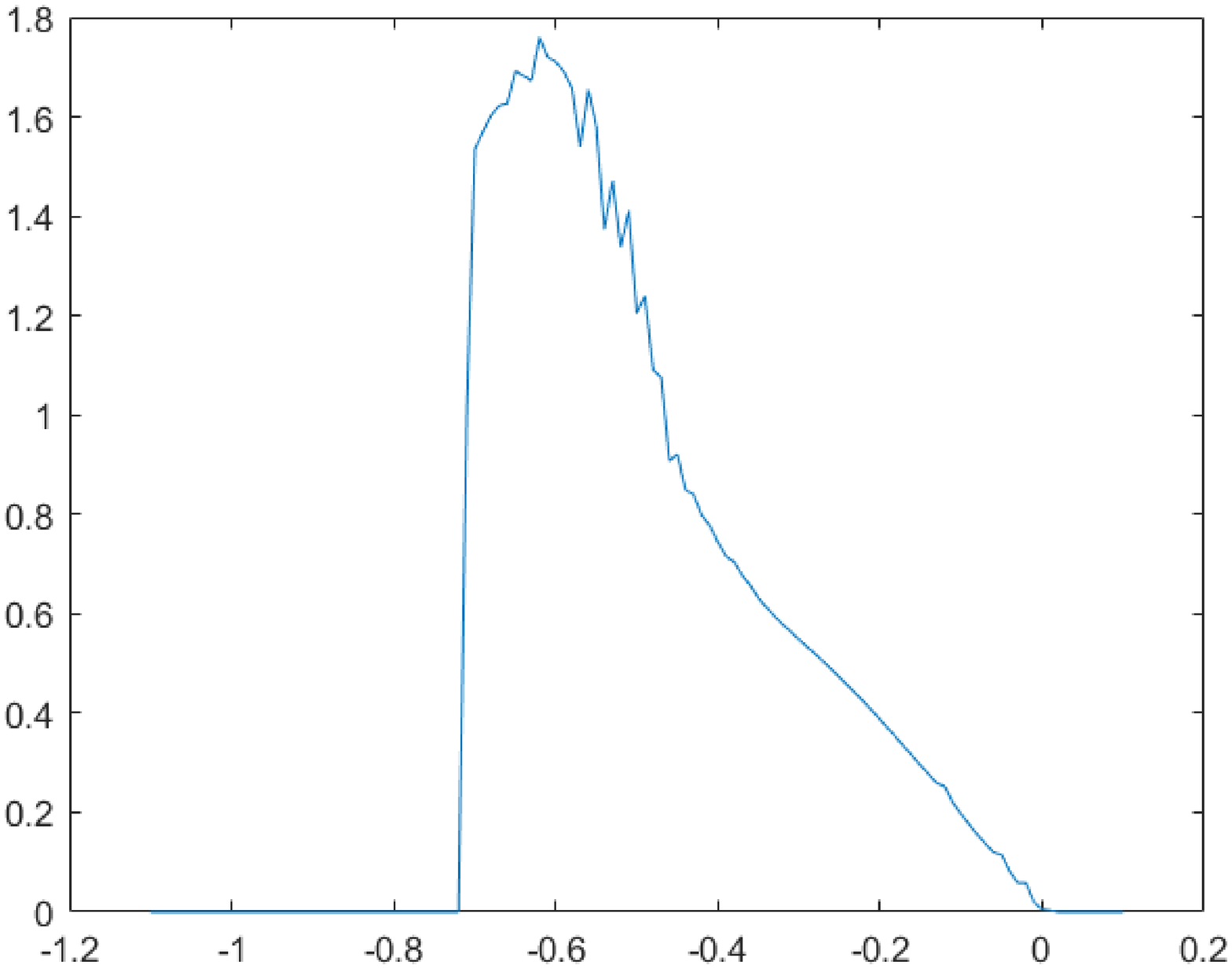}
        \end{minipage}
        \begin{minipage}[c]{0.3\textwidth}
            \centering
            \includegraphics[width=1\textwidth]{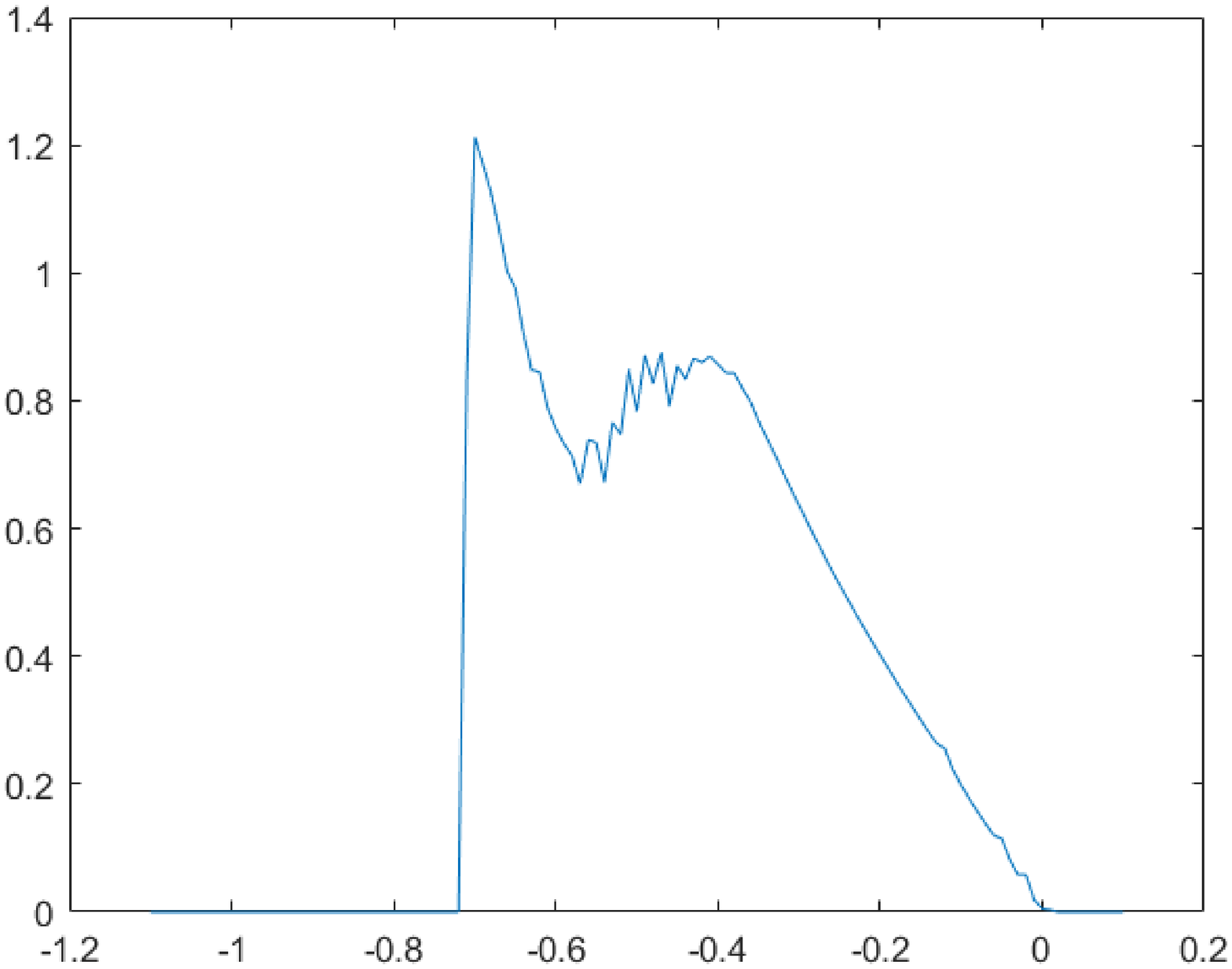}
        \end{minipage}
        \begin{minipage}[c]{0.3\textwidth}
            \centering
            \includegraphics[width=1\textwidth]{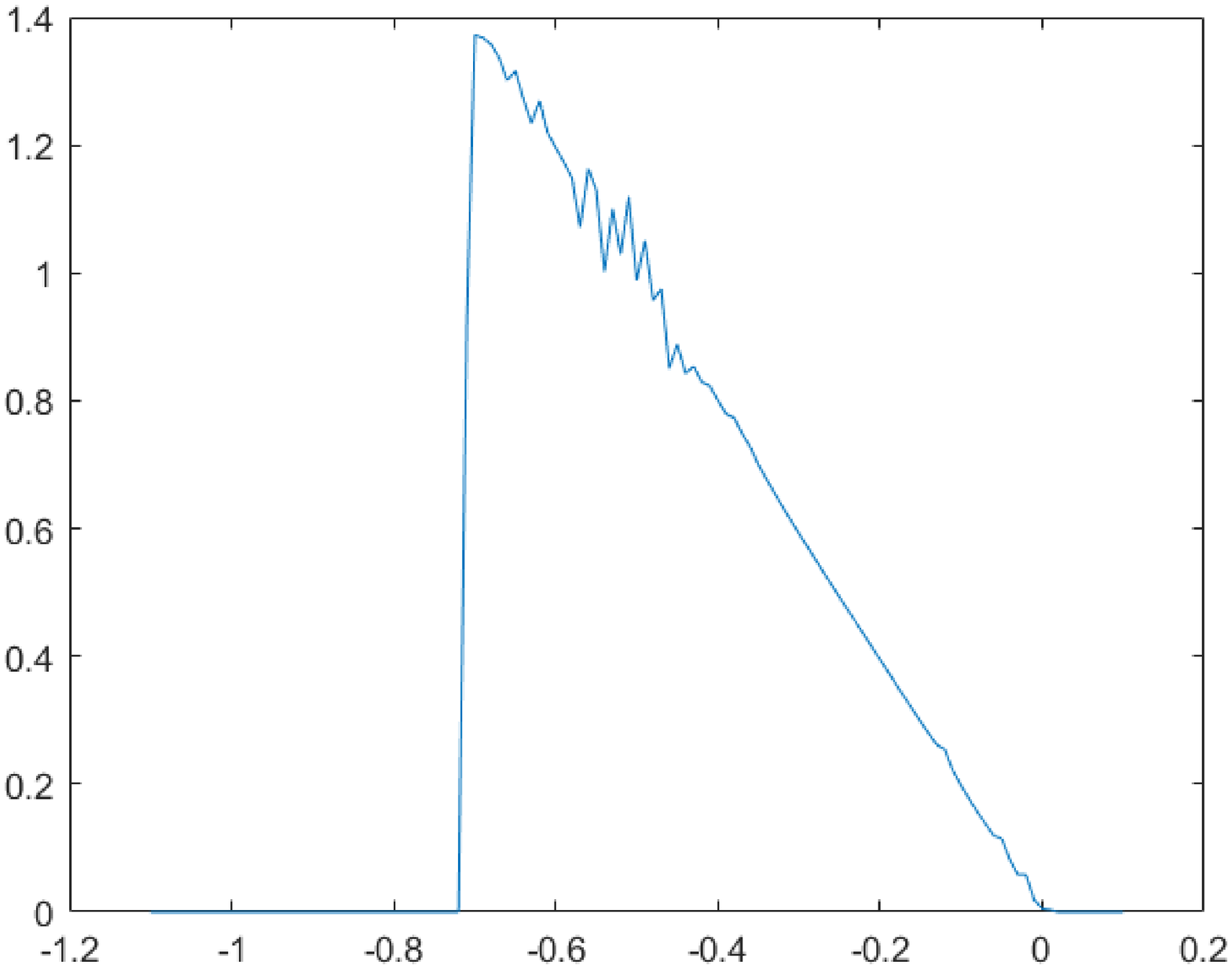}
        \end{minipage}
    \caption{Output signal for the small period learning input) The
final firing rate $N(w)$ with different testing input $J(w)$. Equation parameters $a=1$ and $\varepsilon=0.1$, and the period of the input function in the learning phase is $D=0.01$. Left: Output signal with testing input function $J=I_1$. Middle: Output signal with testing input function $J=I_2$. Right: Output signal with testing input function $J=\frac{I_1+I_2}{2}$.}
    \label{fig:shortD}
\end{figure}

Figure \ref{fig:largeD} shows the output signal of period $D=4$ with testing input function $J=I_1$, $J=I_2$ and $J=\frac{I_1+I_2}{2}$. When $J=I_1=I(w,t_\text{max})$, the output signal $N_J^*(w)$ is like a triangle. Figure \ref{fig:shortD} shows the output signal of period $D=0.01$ with testing input function $J=I_1$, $J=I_2$ and $J=\frac{I_1+I_2}{2}$. When $J=\frac{I_1+I_2}{2}$, the output signal $N_J^*(w)$ is like a triangle. Numerical results show that when the period is relatively large, the signal learned by the model matches $I_1$, and when the period is relatively small, it matches $\frac{I_1+I_2}{2}$. 

The experimental results can be interpreted as follows. When the period is large, the model has enough time to learn, so the learned signal is the input function at the last moment. And when the period is small, neither $I_1$ nor $I_2$ can be learned well, but the result of learning is the average of the two.  Because when the switching process is too fast, the effect of the model on the learning of either $I_1$ or $I_2$ is poor. Instead, the average signal $\frac{I_1+I_2}{2}$ is captured by the time averaging of the learning process.

 \paragraph{Test 4. Phase diagram for leaning.} There are multiple typical time scales in this model: the time scale for the voltage activities, the time scale for learning by redistributing the synaptic weights and the time period in the external input. When introducing the model, we perform a time rescaling for \eqref{eq:problem4}, where the parameter $\varepsilon$ reflects the ratio between the time scales of voltage activities and learning. In the next numerical experiment, we choose $\varepsilon=1,0.5,0.25,0.125$ and periods $D=2^2,2^1,\dots,2^{-7}$ to compare the results of the output signal under different parameters. After the testing phase, we choose the total activity $\bar{N}(t)$ to quantify the output signal:
\begin{equation}
    \label{judge_tool}
    E^{\varepsilon,D}_J=\left| \bar{N}^{\varepsilon,D}_J -\bar{N}^{\varepsilon}_J \right|.
\end{equation}
Here, $\bar{N}^{\varepsilon,D}_J$ denotes the total activity when the equation parameter is $\varepsilon$, the learning input function is given by \eqref{input} with period $D$, while the testing input function is $J$. $\bar{N}^{\varepsilon}_J$ represents the total activity where the equation parameter is $\varepsilon$ and both the learning input function and the testing input function are $J$. $E^{\varepsilon,D}_J$ can roughly measure the output signal. The closer the value of $E^{\varepsilon,D}_J$ is to zero, the superior the model's learning efficacy.

\begin{figure}[!htb]
    \centering
        \begin{minipage}[c]{0.49\textwidth}
            \centering
            \includegraphics[width=1\textwidth]{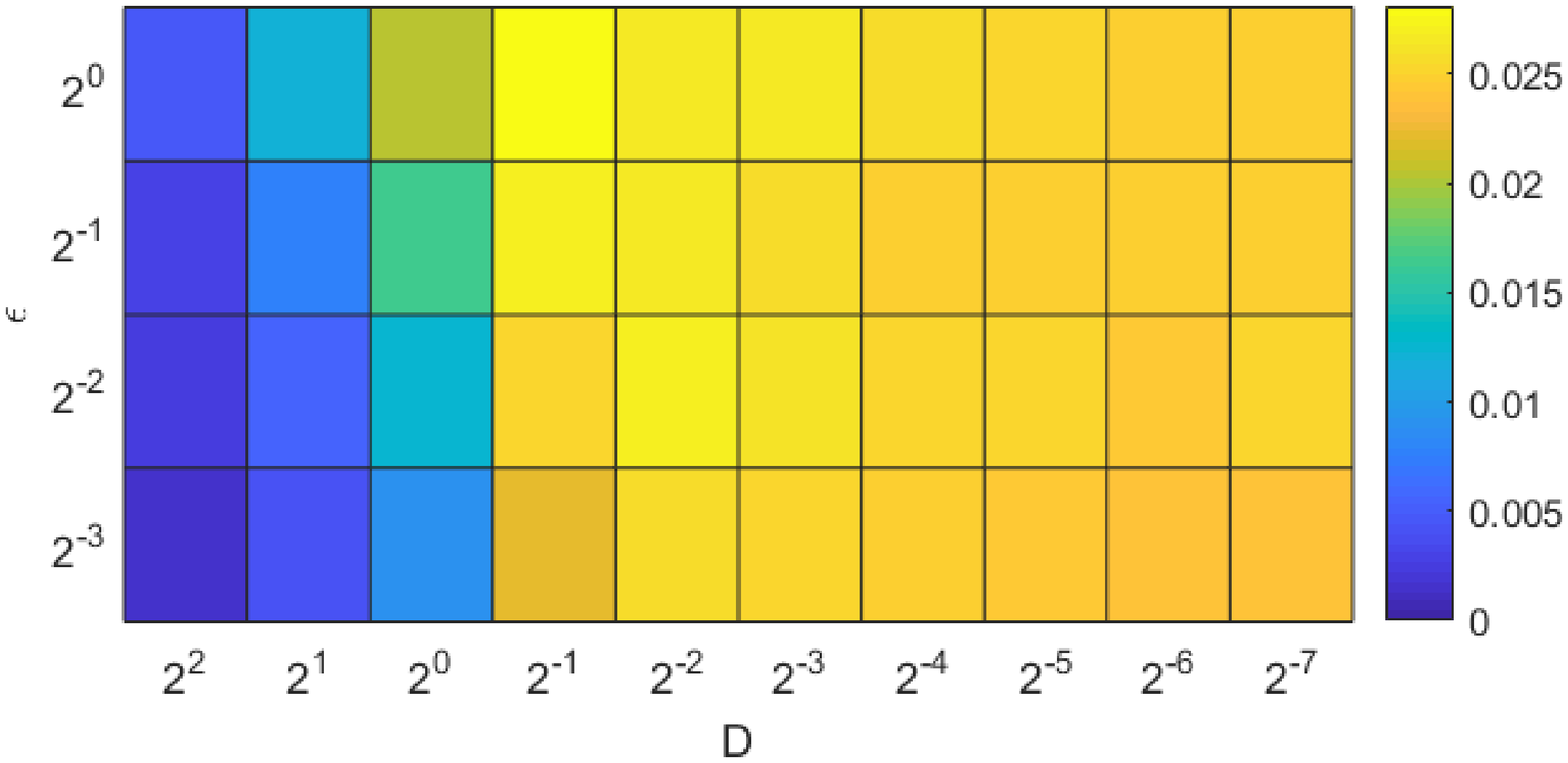}
        \end{minipage}
        \begin{minipage}[c]{0.49\textwidth}
            \centering
            \includegraphics[width=1\textwidth]{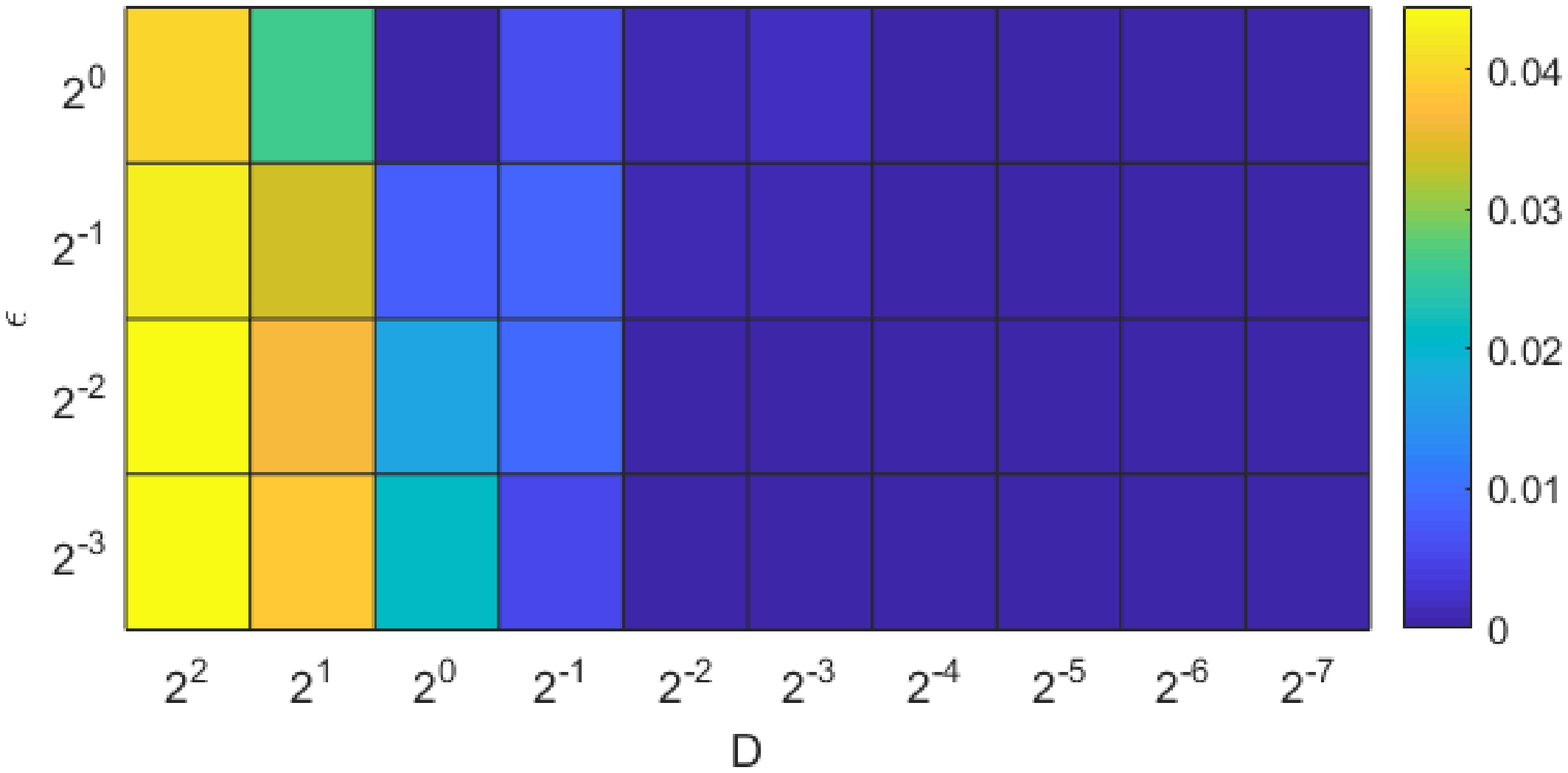}
        \end{minipage}
    \caption{Equation parameter $a=1$. Left: The value of $E^{\varepsilon,D}_J$ under different $D$ and $\varepsilon$ with input function $J=I_1$. Right: The value of $E^{\varepsilon,D}_J$ under different $D$ and $\varepsilon$ with input function $J=\frac{I_1+I_2}{2}$.}
    \label{fig:output3}
\end{figure}

 As shown in Figure \ref{fig:output3}, as the period becomes smaller, the testing indicator becomes less significant with respect to the testing input function $J=I_1$, and the test indicator becomes more significant with respect to the testing input function $J=\frac{I_1+I_2}{2}$. Besides, the numerical results also suggest that when epsilon is small, the transition in learning takes place at a smaller time period, whereas such a trend is not prominent. Although the experiments are not fully conclusive yet, they show a lot of promise for using the proposed numerical method to simulate large-scale tests.

\section{Conclusion} \label{sec:conclusion}

In this work, we have proposed a numerical scheme for approximating the Fokker-Planck equation using spectral methods for spatial discretization and successfully applied it to models with multiple time scales. Specific trial function space can guarantee that dynamic bounds are always satisfied, and suitable test function space is essential for ensuring stability and asymptotic-preserving. This method shows clear advantages regarding computational efficiency for high accuracy compared to existing methods. Besides essential convergence tests and assessments of computational efficiency, we execute assorted numerical examples exploring the response of various solutions. Subsequent to simulating the blow up phenomenon and relative entropy decay of the NNLIF model, we have studied the learning and discriminating behavior of the NNLIF model with learning rules when the input signal is time-dependent. The experimental results demonstrate that the learning behavior of the model obeys the general trend and provides clues for further research. Moving forward, we can still probe new numerical schemes for unbounded domains as well as explore more complex Fokker-Planck equations from neuroscience.

\section*{Acknowledgements}
The work of Z.Zhou is partially supported by the National Key R\&D Program
of China (Project No. 2021YFA1001200, 2020YFA0712000), and the National Natural Science Foundation of China (Grant No. 12031013, 12171013).
This work of Y.Wang is partially supported by the National Natural Science Foundation of China (Grant No. 12171026, U2230402 and 12031013), and Foundation of President of China Academy of Engineering Physics (YZJJZQ2022017).
\appendix

\section{Appendix}
\label{sec:app}
\subsection{The basis functions of $\mathrm{W}_1$ space}
\label{app:basis}

For simplicity, the basis functions of $\mathrm{W}_1$ can be selected as 3rd-degree piecewise polynomials, which is
 \begin{equation}
\begin{aligned}
    &\displaystyle g_1=\begin{cases}
        \displaystyle \frac{2}{(V_{\text{min}}-V_R)^3}v^3-\frac{3(V_{\text{min}}+V_R)}{(V_{\text{min}}-V_R)^3}v^2+\frac{6V_{\text{min}}V_R}{(V_{\text{min}}-V_R)^3}v-\frac{V_{\text{min}}^2(3V_R-V_{\text{min}})}{(V_{\text{min}}-V_R)^3} \qquad &v\in[V_{\text{min}},V_R),\\
        \displaystyle \frac{2}{(V_F-V_R)^3}v^3-\frac{3(V_F+V_R)}{(V_F-V_R)^3}v^2+\frac{6V_FV_R}{(V_F-V_R)^3}v-\frac{V_F^2(3V_R-V_F)}{(V_F-V_R)^3} &v\in[V_R,V_F],
    \end{cases}\\
    &\displaystyle g_2=\begin{cases}
        \displaystyle  \frac{1}{(V_{\text{min}}-V_R)^2}v^3-\frac{2V_{\text{min}}+V_R}{(V_{\text{min}}-V_R)^2}v^2+\frac{V_{\text{min}}(V_{\text{min}}+2V_R)}{(V_{\text{min}}-V_R)^2}v-\frac{V_{\text{min}}^2V_R}{(V_{\text{min}}-V_R)^2} \qquad &v\in[V_{\text{min}},V_R),\\
        \displaystyle  \frac{1}{(V_F-V_R)^2}v^3-\frac{2V_F+V_R}{(V_F-V_R)^2}v^2+\frac{V_F(V_F+2V_R)}{(V_F-V_R)^2}v-\frac{V_F^2V_R}{(V_F-V_R)^2} &v\in[V_R,V_F],
    \end{cases}\\
    &g_3=\begin{cases}
        0 \qquad &v\in [V_{\text{min}},V_R),\\
        \displaystyle  \frac{2}{(V_F-V_R)^2}v^3-\frac{3(V_F+V_R)}{(V_F-V_R)^2}v^2+\frac{(V_F+V_R)^2+2V_RV_F}{(V_F-V_R)^2}v-\frac{V_FV_R(V_R+V_F)}{(V_F-V_R)^2} \qquad &v\in[V_R,V_F].
    \end{cases}
\end{aligned}
\end{equation}
\bibliographystyle{plain}
\bibliography{reference}
\end{document}